\newtheorem{theorem}{Theorem}[section]
\newtheorem{lemma}[theorem]{Lemma}
\newtheorem{corollary}[theorem]{Corollary}
\newtheorem{proposition}[theorem]{Proposition}
\theoremstyle{definition}
\newtheorem{remark}[theorem]{Remark}
\newtheorem{example}[theorem]{Example}
\newtheorem{definition}[theorem]{Definition}
\newtheorem{convention}[theorem]{Convention}
\newcommand{\rsim}{\stackrel{r}{\sim}}
\numberwithin{equation}{section}
\begin{document}
\title{Enumerating pattern-avoiding permutations by leading terms}

\author{\"{O}mer E\u{g}ecio\u{g}lu$^1$\qquad Collier Gaiser$^{2}$\qquad Mei Yin$^2$\\ \\
\small $^1$Department of Computer Science \\
\small University of California Santa Barbara \\
\small Santa Barbara, CA 93106\\
\small \tt omer@cs.ucsb.edu
\\
\\
\small $^2$Department of Mathematics \\
\small University of Denver\\
\small Denver, CO 80208\\
\small \tt collier.gaiser@du.edu, mei.yin@du.edu
}

\date{}

\maketitle

\begin{abstract}
The number of $123$-avoiding permutations on $\{1,2,\ldots,n\}$ with a fixed leading 
term is counted by the ballot numbers. The same holds for $132$-avoiding permutations.
These results were proved by Miner and Pak 
using the Robinson–Schensted–Knuth (RSK) correspondence to connect permutations with Dyck paths. 
In this paper, we first provide an alternate proof of these enumeration results via a
direct counting argument.
We then study the number of pattern-avoiding permutations 
with a fixed prefix of length $t\geq 1$, generalizing the $t=1 $ case.
We find exact expressions for single and pairs of patterns of 
length three as well as the pair $3412$ and $3421$. 
These expressions depend on $t$, the extrema, and the order statistics.
We also define $r$-Wilf equivalence for permutations with a 
single fixed leading term $r$, and classify the $r$-Wilf-equivalence classes for both 
classical and vincular patterns of length three.
\end{abstract}

{\small \textbf{Keywords:} pattern avoidance, permutations, leading terms, enumeration} \\
\indent {\small \textbf{AMS 2020 subject classification:} 05A05; 05A15}

\section{Introduction}
Let $A\subseteq\mathbb{N}:=\{1,2,\ldots\}$ be a finite set. We use $|A|$ to denote the number of elements in $A$. A permutation $\tau$ on $A$ is a sequence $(\tau(1),\tau(2),\ldots,\tau(|A|))$ of length $|A|$ consisting of distinct numbers in $A$. 
 When $A\subseteq\{1,2,\ldots,9\}$ or when there is no confusion, we 
simply write a permutation/sequence without commas or 
parentheses in single-line notation. The leading $t\geq 1$ terms of a 
sequence is a prefix of length $t$ 
of the sequence.
When $t=1$, the prefix is also called the \textit{leading term} of the sequence.
We use $S_A$ to denote the set of permutations on $A$. When $A=[n]:=\{1,2,\ldots,n\}$ for some $n\in\mathbb{N}$, we 
write $S_n$ for $S_{[n]}$.

For any $\tau\in S_n$ and $\sigma\in S_k$, if there exist $1\leq i_1<i_2<\cdots<i_k\leq n$ such that for all $1\leq a<b\leq k$, $\tau(i_a)<\tau(i_b)$ if and only if $\sigma(a)<\sigma(b)$, 
then we say that $\tau$ \textit{contains $\sigma$ as a pattern} and 
that $(\tau(i_1),\tau(i_2)\ldots,\tau(i_k))$ is a $\sigma$ pattern.
A permutation
$\tau$ \textit{avoids $\sigma$} if $\tau$ does not contain $\sigma$ as a pattern. For example, the permutation $\tau=12453\in S_5$ contains the pattern $132$ because $\tau(1)\tau(3)\tau(5)=143$ is a $132$ pattern; however, $\tau$ avoids the pattern $321$. 
For any $m,n,k\in\mathbb{N}$ and $\sigma_1,\sigma_2,\ldots,\sigma_m\in S_k$, we use $S_n(\sigma_1,\sigma_2,\ldots,\sigma_m)$ to denote the set of permutations $\tau\in S_n$ such that $\tau$ avoids 
all of the patterns $\sigma_1,\sigma_2,\ldots,\sigma_m$. 

The interest in the study of 
pattern avoidance can be traced back to 
stack-sortable permutations in computer science \cite[Section 2.1]{Kitaev2011}. 
One of the earliest results is the enumeration of permutations avoiding $\sigma\in S_3$, 
i.e., patterns of length three. D. Knuth \cite{Knuth1973} proved that the number 
of permutations in $S_n$ avoiding any given pattern of length three is 
enumerated by the Catalan numbers $C_n$
(see also \cite[Theorem 4.7]{Bona2022}).

\begin{theorem} {\rm \cite[p. 238]{Knuth1973}} \label{Theorem:ClassicalSingle3}
For all $n\geq1$ and $\sigma\in S_3$, we have
\[
|S_n(\sigma)|=C_n=\frac{1}{n+1} \binom{2n}{n}.
\]
\end{theorem}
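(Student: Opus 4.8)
The plan is the classical reduction to two representative patterns. Among the six patterns in $S_3$, the reverse map $\tau\mapsto\tau^r$ given by $\tau^r(i)=\tau(n+1-i)$, the complement map $\tau\mapsto\tau^c$ given by $\tau^c(i)=n+1-\tau(i)$, and inversion $\tau\mapsto\tau^{-1}$ are all bijections of $S_n$ that restrict to bijections from $S_n(\sigma)$ onto $S_n(\sigma^r)$, $S_n(\sigma^c)$, and $S_n(\sigma^{-1})$, respectively. Applying these operations to the patterns of length three shows that $\{123,321\}$ and $\{132,213,231,312\}$ are the only two orbits, so it suffices to prove $|S_n(132)|=C_n$ and $|S_n(123)|=C_n$.

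For the pattern $132$ I would argue by recursion on $n$. Given $\tau\in S_n(132)$, let $k$ be the position with $\tau(k)=n$. If some $\tau(a)$ with $a<k$ were smaller than some $\tau(b)$ with $b>k$, then $(\tau(a),\tau(k),\tau(b))$ would form a $132$ pattern; hence positions $1,\dots,k-1$ carry exactly the values $\{n-k+1,\dots,n-1\}$ while positions $k+1,\dots,n$ carry the values $\{1,\dots,n-k\}$. After rescaling, each of the two blocks is an arbitrary $132$-avoiding permutation and the two may be chosen independently, so with $c_n:=|S_n(132)|$ and $c_0:=1$ we obtain
\[
c_n=\sum_{k=1}^{n}c_{k-1}\,c_{n-k},
\]
which is the Catalan recurrence; therefore $c_n=C_n$.

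For the second orbit the recursion above has no clean analogue, and this is the step I expect to be the main obstacle. I would instead work with $321$ (in the same orbit as $123$) and use the structural fact that in a $321$-avoiding permutation the entries that are \emph{not} left-to-right maxima form an increasing subsequence: if $\sigma(j)>\sigma(k)$ with $j<k$ and $\sigma(j)$ is not a left-to-right maximum, then some $\sigma(i)>\sigma(j)$ with $i<j$, and $(\sigma(i),\sigma(j),\sigma(k))$ is a $321$ pattern. Since the left-to-right maxima are automatically increasing, a $321$-avoider is a shuffle of two increasing sequences, and recording which positions hold left-to-right maxima (the remaining positions being filled by the unused values in increasing order) identifies $S_n(321)$ with a family of lattice paths of size $n$; the delicate point is pinning down the exact ballot-type condition, after which the count is $C_n$. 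Alternatively one can bypass this structure by invoking the Robinson--Schensted correspondence---$\tau$ avoids $123$ exactly when its longest increasing subsequence has length at most two, so RSK matches $S_n(123)$ with the pairs of standard Young tableaux of a common shape fitting inside two columns, of which there are $C_n$---or by exhibiting a direct bijection $S_n(123)\to S_n(132)$ such as the Simion--Schmidt map, which fixes the left-to-right minima and reorders the remaining entries. In every route the substantive difficulty is the same: $123$-avoidance, unlike $132$-avoidance, carries no transparent self-similar block structure, so the Catalan count for it has to be extracted by an indirect argument.
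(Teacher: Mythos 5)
The paper does not prove this statement at all: it is quoted from Knuth with a citation, so there is no in-paper argument to compare yours against. Judged on its own, your plan is the standard one and is mostly sound. The reduction via reverse, complement, and inverse to the two orbits $\{123,321\}$ and $\{132,213,231,312\}$ is correct, and your treatment of $132$ (split at the position of $n$, observe that the entries before $n$ must all exceed the entries after $n$, deduce the Catalan recurrence) is a complete proof for that orbit.

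The gap is in the second orbit, where you explicitly stop short in all three of the routes you name. For the left-to-right-maxima route you say the delicate point is ``pinning down the exact ballot-type condition,'' and that is indeed the entire content of the proof for $321$: knowing that a $321$-avoider is a shuffle of two increasing sequences is not enough (not every such shuffle avoids $321$), and one must characterize exactly which position sets and value sets of left-to-right maxima occur and show the resulting objects are counted by $C_n$. The RSK route is correct in outline but rests on the identity $\sum_{\lambda}\bigl(f^{\lambda}\bigr)^{2}=C_n$ over shapes with at most two columns, which you do not establish. The cheapest completion, given what you have already proved, is the third route: exhibit the Simion--Schmidt map $S_n(123)\to S_n(132)$ (fix the left-to-right minima, rewrite the remaining entries) and verify it is a bijection; combined with your $132$ count this finishes the theorem. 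Note that the paper itself leans on exactly this bijection later (citing B\'ona, Lemma 4.4), using the additional fact that it preserves the leading term, so writing it out carefully would not be wasted effort.
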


For $r,n\in\mathbb{N}$ with $r\leq n$, let $S_{n,r}$ denote the set 
of permutations $\tau\in S_n$ with $\tau(1)=r$.
It is clear that $|S_{n,r}|=(n-1)!$ for all $r\in[n]$. 
While studying the shapes of pattern-avoiding permutations, Miner and Pak \cite{MinerPak2014} proved that $S_{n,r}(123)$ and $S_{n,r}(132)$ are enumerated by the 
ballot numbers (see, for example Aval \cite{Aval2008} for more details on the ballot numbers).

\begin{theorem} {\rm \cite[Lemmas 4.1 and 5.2]{MinerPak2014}}\label{Theorem:ClassicalLeading123AND132}
    For all $1\leq r\leq n$, we have \[|S_{n,r}(123)|=|S_{n,r}(132)|=b_{n,r}=\frac{n-r+1}{n+r-1}\binom{n+r-1}{n}.\]
\end{theorem}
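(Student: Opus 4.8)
The plan is to give a direct bijective or recursive counting argument for $|S_{n,r}(132)|$ and $|S_{n,r}(123)|$, rather than going through RSK and Dyck paths as Miner and Pak did.

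First I would handle the $132$ case. Fix $\tau \in S_{n,r}(132)$ with $\tau(1) = r$. Since $\tau$ avoids $132$ and $r$ is the leading term, no entry to the right of position $1$ that exceeds $r$ can be followed by an entry smaller than it but larger than... wait, more carefully: the constraint is that among the remaining $n-1$ entries, any occurrence of a value $>r$ forces strong restrictions. The cleanest route: in a $132$-avoiding permutation, the entries larger than $\tau(1)=r$ must appear in \emph{decreasing} order and, in fact, must all be packed contiguously right after cleverly-placed smaller entries — actually the standard fact is that the positions of values $r+1, r+2, \dots, n$ are forced once we know which prefix they occupy, because after $r$, to avoid $132$ with $r$ as the ``1'', we cannot have a small value following a large value that is itself followed by a medium value. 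I would set up the recursion by conditioning on the position of the value $n$ (the global maximum). In a $132$-avoiding permutation everything before $n$ must be larger than everything after $n$; combined with $\tau(1)=r$, this splits the problem. If $n$ is in position $j$, then positions $2,\dots,j-1$ hold the top $j-2$ values among $\{1,\dots,n-1\}\setminus\{r\}$ arranged $132$-avoidingly with a \emph{free} leading term (position $2$), while positions $j+1,\dots,n$ hold the bottom values $132$-avoidingly. This yields a convolution identity $|S_{n,r}(132)| = \sum_{j} |S_{n-1}^{\text{free-ish}}| \cdot C_{\text{something}}$, which I would then match against the ballot-number recurrence $b_{n,r} = b_{n,r-1} + b_{n-1,r}$ (with appropriate boundary $b_{n,n} = C_{n-1}$, $b_{n,1} = C_n - C_{n-1}$ or similar), verified by induction on $n$.

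For the $123$ case I would either give a parallel argument or, more economically, produce an explicit bijection between $S_{n,r}(123)$ and $S_{n,r}(132)$ that fixes the leading term. The standard $123 \leftrightarrow 132$ Wilf-equivalence bijections (e.g. the one of West, or the ``left-to-right minima'' swap) do not obviously preserve $\tau(1)$, so the real content is checking — or modifying the construction so — that the leading term is preserved. A $123$-avoiding permutation decomposes as the interleaving of two decreasing subsequences (the left-to-right minima and the rest); since $\tau(1)=r$, the value $r$ is the first left-to-right minimum, and I would track how $r$ sits relative to the two decreasing chains, then describe the induced structure on the $132$-avoiding side. Alternatively, directly: a $123$-avoiding $\tau$ with $\tau(1)=r$ means the values $\{1,\dots,r-1\}$ must all appear (they are automatically $<r$) and the values $\{r+1,\dots,n\}$ must appear in decreasing order after being ``dominated,'' giving the same recursion as above.

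The main obstacle I anticipate is the bookkeeping around the \emph{free leading term} subproblem that appears after removing $\tau(1)=r$ or after conditioning on the position of $n$: the sub-permutation on the remaining entries does not itself have a prescribed leading term, so I need an auxiliary identity expressing $\sum_{r'} b_{m,r'}$-type sums (total over all leading terms, which is just $(m-1)!$ for the \emph{un}restricted count but $C_m$ for the avoiding count) and reconciling that with the Catalan recursion. Concretely, the crux is verifying the combinatorial identity
\[
b_{n,r} \;=\; \sum_{k=0}^{?} C_k \, b_{\,n-1-k,\;r-1}
\]
or its variant, which should follow from the generating-function identity for ballot numbers, $\sum_{n\ge r} b_{n,r} x^n = x^r C(x)^{r}$ where $C(x)$ is the Catalan generating function — so the proof ultimately reduces to a clean generating-function manipulation or an equivalent direct induction on $n$ with $r$ as a parameter.
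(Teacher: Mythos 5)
Your overall strategy is genuinely different from the paper's. The paper first reduces $123$ to $132$ by invoking the Simion--Schmidt bijection between $S_n(123)$ and $S_n(132)$, which is known to preserve the leading term, and then counts $S_{n,r}(123)$ \emph{in closed form}: the subpermutation on $\{1,\dots,r-1\}$ is an arbitrary $123$-avoider stratified by the position $i$ of its first ascent (counted by Simion--Schmidt's $a_{r-1}(i)=\binom{2r-i-3}{r-2}-\binom{2r-i-3}{r-1}$), the values $r+1,\dots,n$ are forced to appear in decreasing order and to precede that first ascent, and the count becomes $\sum_i\binom{i+n-r}{i}a_{r-1}(i)$, evaluated by a Vandermonde-type identity. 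Your plan instead sets up a recursion by conditioning on the position of the maximum in the $132$ case and matches it against a ballot-number recurrence. That route is viable in principle, but as written it has concrete gaps.

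First, the left block is \emph{not} free: if $n$ sits at position $j$, then positions $1,\dots,j-1$ carry the top $j-1$ values of $[n-1]$, and they must form a $132$-avoiding permutation whose \emph{prescribed} leading term is $r$ (standardized to $r-n+j$), because a descent among values exceeding $r$ inside that block creates a $132$ pattern with $r$ as the ``1''. The correct recursion is $|S_{n,r}(132)|=\sum_{j\ge n-r+1}|S_{j-1,\,r-n+j}(132)|\,C_{n-j}$; replacing the first factor by the unconstrained count $C_{j-2}$, as your ``free leading term (position 2)'' phrasing suggests, already fails at $n=4$, $r=1$, where it yields $2$ rather than $b_{4,1}=1$. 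Second, the identity you call the crux is left with a question mark and a wrong index (it should be $b_{n,r}=\sum_{k\ge 0}C_k\,b_{n-1-k,\,r-k}$, not $b_{n-1-k,\,r-1}$), and the generating function you propose to verify it with, $\sum_{n\ge r}b_{n,r}x^n=x^rC(x)^r$, is false: $b_{n,1}=1$ for all $n$, while $[x^n]\,xC(x)=C_{n-1}$. Third, for $123$ the claim that a direct argument ``gives the same recursion as above'' does not hold: conditioning on the position of $n$ does not split a $123$-avoider into independent blocks, and the actual structure is the first-ascent/shuffle decomposition the paper uses. Your fallback of finding a leading-term-preserving $123\leftrightarrow132$ bijection is exactly the paper's move, but you would still need to supply or cite that preservation property rather than leave it as something to check.
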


Miner and Pak \cite{MinerPak2014} proved \cref{Theorem:ClassicalLeading123AND132} via a bijection between $S_{n,r}(123)$ (respectively, $S_{n,r}(132)$) and certain types of Dyck paths 
using the Robinson–Schensted–Knuth (RSK) correspondence. In this paper, we prove \cref{Theorem:ClassicalLeading123AND132} using 
a direct counting argument. By the classical bijection between $S_n(123)$ and $S_n(132)$ \cite[Lemma 4.4]{Bona2022} which preserves the 
leading term, one only needs to prove that $|S_{n,r}(123)|=b_{n,r}$. We achieve this 
by utilizing a result of Simion and Schmidt \cite[Lemma 2]{SimionSchmidt1985} on the number of $123$-avoiding permutations with a 
fixed decreasing prefix.

It is natural to consider the case in which more than one leading term of the permutation is fixed.
Motivated by this general case, we study pattern-avoiding permutations 
with a fixed prefix $(c_1,c_2,\ldots,c_t)$ of length $t\geq 1$. Here the $t=1$ instance corresponds to 
the case studied by Miner and Pak.

\begin{definition}
    For any $n,m\in\mathbb{N}$, distinct integers $c_1,c_2,\ldots,c_t\in[n]$, and permutation patterns $\sigma_1,\sigma_2,\ldots,\sigma_m$, we use $S_{n,(c_1,c_2,\ldots,c_t)}$ to denote the set of permutations $\tau\in S_n$ such that $(\tau(1),\tau(2),\ldots,\tau(t))=(c_1,c_2,\ldots,c_t)$; and we use $S_{n,(c_1,c_2,\ldots,c_t)}(\sigma_1,\sigma_2,\ldots,\sigma_m)$ to denote the set of permutations $\tau\in S_{n,(c_1,c_2,\ldots,c_t)}$ such that $\tau$ 
avoids all of the patterns $\sigma_1,\sigma_2,\ldots,\sigma_m$.
\end{definition}

\begin{convention}\label{Convention:AvoidanceForLeadingTerms}
Unless otherwise specified, for $S_{n,(c_1,c_2,\ldots,c_t)}(\sigma_1,\sigma_2,\ldots,\sigma_m)$, 
we assume that the fixed prefix 
$(c_1,c_2,\ldots,c_t)$ itself avoids all of the patterns $\sigma_1,\sigma_2,\ldots,\sigma_m$, $n\geq3$, and $1\leq t<n$.
\end{convention}

We first show that the cardinality of 
$S_{n,(c_1,c_2,\ldots,c_t)}(\sigma)$ can be determined exactly for all $\sigma\in S_3$. For $\sigma\in\{123,132,321,312\}$, if $|S_{n,(c_1,c_2,\ldots,c_t)}(\sigma)|\neq0$, then $S_{n,(c_1,c_2,\ldots,c_t)}(\sigma)$ is enumerated by ballot numbers. This is because, as we will show later in the proof of \cref{Theorem:123LeadingTerms}, there is a natural bijection between $S_{n,(c_1,c_2,\ldots,c_t)}(\sigma)$ and $S_{n-t+1,r}(\sigma)$ for some $r$ 
which depends on $\{c_1,c_2,\ldots,c_t\}$. For $\sigma\in\{213,231\}$, if $|S_{n,(c_1,c_2,\ldots,c_t)}(\sigma)|\neq0$, then $|S_{n,(c_1,c_2,\ldots,c_t)}(\sigma)|$ is equal to a product of Catalan numbers. This is because, for all $\tau\in S_{n,(c_1,c_2,\ldots,c_t)}(\sigma)$, the 
order statistics of $\{c_1,c_2,\ldots,c_t\}$ determine 
a `block' structure for the suffix $(\tau(t+1),\tau(t+2),\ldots,\tau(n))$.

We then show that for all pairs of patterns of length three, 
the number of permutations avoiding these patterns and with 
a fixed prefix can also be determined exactly. The expressions for pairs of patterns 
of length three depend on the extrema, the order statistics, and the 
length of the prefix.
We also determine the cardinality of $S_{n,(c_1,c_2,\ldots,c_t)}(3412,3421)$. As in the 
case of $S_{n,(c_1,c_2,\ldots,c_t)}(231)$, if $|S_{n,(c_1,c_2,\ldots,c_t)}(3412,3421)|\neq 0$, 
then for all $\tau\in S_{n,(c_1,c_2,\ldots,c_t)}(3412,3421)$, the suffix $(\tau(t+1),\tau(t+2),\ldots,\tau(n))$ 
has a 
`block' structure. However, since the patterns $3412$ and $3421$ are of length four, 
some overlap between these blocks are possible. The cardinality 
of $S_{n,(c_1,c_2,\ldots,c_t)}(3412,3421)$ is related to the large Schr\"{o}der numbers, 
and our treatment
incidentally provides a new combinatorial proof of 
a recurrence relation for large Schr\"{o}der numbers $\mathbb{S}_n$ \cite[p.~446]{Bona2022}. 

In addition to exact enumeration, the notion of Wilf equivalence classes are also an important topic in pattern 
avoidance. Two permutation patterns $\sigma$ and $\sigma'$ are said to 
be {\em Wilf equivalent}, denoted $\sigma\sim\sigma'$, if $|S_n(\sigma)|=|S_n(\sigma')|$ for all $n\in\mathbb{N}$. By \cref{Theorem:ClassicalSingle3}, all permutation patterns of length three are Wilf equivalent: $123\sim132\sim213\sim231\sim312\sim321$. In other words, there is only one Wilf-equivalence class for permutation patterns of length three. For patterns of length four, it is known that there are three Wilf-equivalence classes \cite[p.~158]{Bona2022}.

We consider a similar Wilf-equivalence concept for permutations with a fixed leading term. 
For a 
fixed $r\in\mathbb{N}$, two patterns $\sigma$ and $\sigma'$ are 
called {\em $r$-Wilf equivalent} if $|S_{n,r}(\sigma)|=|S_{n,r}(\sigma')|$ for all 
$n\geq r$. We 
write $\sigma\rsim\sigma'$ if $\sigma$ and $\sigma'$ are $r$-Wilf equivalent. As an example, two patterns $\sigma$ and $\sigma'$ are $2$-Wilf 
equivalent, denoted $\sigma\stackrel{2}{\sim}\sigma'$, if for all $n\geq2$, $|S_{n,2}(\sigma)|=|S_{n,2}(\sigma')|$. We show that there are two $1$-Wilf-equivalence classes 
for patterns of length three, 
$123\stackrel{1}{\sim} 132$ and $321\stackrel{1}{\sim}312\stackrel{1}{\sim}213\stackrel{1}{\sim}231$; and, for all $r\geq2$, there are three $r$-Wilf-equivalence classes for patterns of length three, $213\rsim231$, $123\rsim132$, and $321\rsim312$. We also show that for all $r\geq5$, there are nine $r$-Wilf-equivalence classes for vincular patterns of length three as studied in \cite{BabsonSteingrimsson2000,Claesson2001}.
\\

This paper is organized as follows. In \cref{Section:Preliminaries}, we define basic 
concepts and state our preliminary results. We then provide a new proof of \cref{Theorem:ClassicalLeading123AND132} in \cref{Section:NewProof}. In \cref{Section:Single3}, we present results on the number of permutations 
with a fixed prefix that avoid a single pattern of length three. 
The case for the avoidance of pairs of patterns of length three 
is presented in \cref{Section:Pair3}. Permutations avoiding both 3412 and 3421 are then studied in \cref{Section:Pair4}. In \cref{Section:LeadingWilf}, we classify $r$-Wilf-equivalence classes for classical and vincular patterns of length three.


\section{Preliminaries}\label{Section:Preliminaries}
\begin{definition}\label{Definition:Complements}
    For a permutation $\tau\in S_n$, the {\em complement} $\tau^c$ of $\tau$ is the permutation in $S_n$ defined by setting 
$\tau^c(i)=n+1-\tau(i)$.
\end{definition}
The following result relates permutations avoiding 
certain patterns with those permutations avoiding the complement of 
these patterns. Since the proof is elementary, we state it without proof.

\begin{lemma}\label{lemma:complements}
Let $t$, $n$, $m$, and $k$ be positive integers with $t,k\leq n$, $\sigma_1,\sigma_2,\dots,\sigma_m\in S_k$ permutation patterns, and $c_1,c_2,\ldots,c_t\in[n]$. Then we have \[|S_{n,(c_1,c_2,\ldots,c_t)}(\sigma_1,\sigma_2,\ldots,\sigma_m)|=|S_{n,(n+1-c_1,n+1-c_2,\ldots,n+1-c_t)}(\sigma_1^c,\sigma_2^c\ldots,\sigma_m^c)|.\]
\end{lemma}

\begin{definition}
    Let $A$ and $B$ be two finite subsets of $\mathbb{N}$ with $A\subseteq B$, $\sigma\in S_A$, and $\tau\in S_B$. We say that $\sigma$ is a 
{\em subpermutation} of $\tau$ on $A$ if 
there exist indices $1 \leq i_1<i_2<\cdots<i_{|A|} \leq |B|$ such that
    \[
    (\tau(i_1),\tau(i_2),\ldots,\tau(i_{|A|}))=(\sigma(1),\sigma(2),\ldots,\sigma(|A|)).
    \]
\end{definition}
For example, if  $\tau=543621\in S_6$, 
then $\sigma=462\in S_{\{2,4,6\}}$ is a subpermutation of $\tau$ on $\{2,4,6\}$.

\begin{definition}
    Suppose $\sigma$ is a permutation on a set $A$ and $\tau$ is a permutation on 
a set $B$ with $A\cap B=\emptyset$.
A {\em shuffle} of $\sigma$ and $\tau$ is a permutation $\alpha$ on $A\cup B$ 
such that $\sigma$ is a subpermutation of $\alpha$ on $A$ and $\tau$ 
is a subpermutation of $\alpha$ on $B$.
\end{definition} 
For example, if $A= \{4,5,7\}$, 
$B=\{1,3,6\}$,
$\sigma=457\in S_A$, and $\tau=631\in S_B$, then 
$\alpha=643571\in S_{A\cup B}$ and $\alpha'=456317\in S_{A\cup B}$ are shuffles of $\sigma$ and $\tau$. The 
following simple observation is crucial for our later derivations. We state it without proof.
\begin{lemma}
Suppose $A,B\subseteq \mathbb{N}$ with $|A\cap B|=\emptyset$, $|A|=k$, and $|B|=\ell$. If $\sigma\in S_A$ and $\tau\in S_B$, then the number of shuffles of $\sigma$ and $\tau$ is $\binom{k+\ell}{k}$.
\end{lemma}

We will use the following terminology. 
\begin{definition}
Let $A\subseteq\mathbb{N}$ be 
a finite set and $\tau\in S_{A}$. If $\tau(i)=a$, then we 
use $\mathcal{A}_\tau(a)=\{\tau(1),\tau(2),\ldots,\tau(i-1)\}$ to denote the set 
of {\em ancestors} of $a$ in $\tau$ and $\mathcal{D}_\tau(a)=\{\tau(i+1),\tau(i+2),\ldots,\tau(|A|)\}$ 
to denote the set of {\em descendants} of $a$ in $\tau$. 
\end{definition}

For example, if $\tau=2785\in S_{\{2,5,7,8\}}$, then $\mathcal{A}_\tau(8)=\{2,7\}$ 
and $\mathcal{D}_\tau(7)=\{5,8\}$.

\begin{definition}
Let $n\in\mathbb{N}$ and $A\subseteq[n]$. The \textit{standardization} of $\tau=(\tau(1),\tau(2),\ldots,\tau(|A|))\in S_A$ 
is the permutation $s(\tau)\in S_{|A|}$ obtained by replacing the $i$th smallest entry in $\tau$ 
with $i$ for all $i$.
\end{definition}

For example, the standardization of $567832\in S_{\{2,3,5,6,7,8\}}$ is $345621\in S_6$. We include below a simple observation concerning standardization. 
\begin{lemma}\label{Lemma:StandardizationPreservesPatternAvoidance}
    Let $n\in\mathbb{N}$, $A\subseteq[n]$, and $\tau\in S_A$. If $\tau$ avoids a pattern $\sigma$, 
then $s(\tau)$ also avoids the 
pattern $\sigma$.
\end{lemma}

\begin{definition}
    Let $n,n'\in\mathbb{N}$ with $n\leq n'$, $A\subseteq[n']$ with $|A|=n$, and $\tau\in S_{n}$. Then the \textit{matching permutation} $\tau'$ of $\tau$ on $A$ is defined as follows: if $\tau(i)=j$ where $i,j\in\{1,2,\ldots,n\}$, then $\tau'(i)$ is the $j$th smallest integer in $A$.
\end{definition}

For example, with $n=3$, $n'=7$, and $A=\{2,4,7\}$, the matching permutation of $231\in S_3$ on $\{2,4,7\}$ is $472\in S_{\{2,4,7\}}$.

Notice that the matching permutation of a permutation also preserves pattern avoidance. \\

The first few Catalan numbers $C_n$, Bell numbers $B_n$, and large Schr\"{o}der 
numbers $\mathbb{S}_n$ are listed in Table \ref{first_table}
for later reference.
\begin{table}[H]
\centering
\begin{tabular}{|c||c|c|c|c|c|c|c|c|c|c|c|c|}
\hline
$n$&0&1&2&3&4&5&6&7&8&9&10&OEIS \cite{OEIS}\\\hline\hline$C_n$&1&1&2&5&14&42&132&429&1430&4862&16796&A000108\\\hline
$B_n$&1&1&2&5&15&52&203&877&4140&21147&115975&A000110\\\hline
$\mathbb{S}_n$&1&2&6&22&90&394&1806&8558&41586&206098&1037718&A006318\\\hline

\end{tabular}
\caption{$C_n$, $B_n$, and $\mathbb{S}_n$ for $n\leq 10$.}
\label{first_table}
\end{table}

We also need the following elementary results on the Catalan and the Bell numbers.
\begin{lemma}\label{Lemma:CatalanBell}
    For all $n\geq4$, we have $C_n<B_n$.
\end{lemma}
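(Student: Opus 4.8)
The plan is to proceed by induction on $n$, using a standard recurrence for each side together with a termwise comparison of the summands. First I would recall that the Catalan numbers satisfy the convolution recurrence $C_{n+1} = \sum_{k=0}^{n} C_k C_{n-k}$, while the Bell numbers satisfy $B_{n+1} = \sum_{k=0}^{n} \binom{n}{k} B_k$. The base case is handled by direct inspection of Table \ref{first_table}: $C_4 = 14 < 15 = B_4$ (and one could also check $C_5 = 42 < 52 = B_5$ if a two-step induction is cleaner). For the inductive step, assume $C_m \leq B_m$ for all $m \leq n$ with strict inequality once $m \geq 4$; I want to conclude $C_{n+1} < B_{n+1}$.

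The key observation is a termwise domination of the Catalan convolution by the Bell recurrence. Rewrite the Catalan recurrence by pairing the term $C_k C_{n-k}$ and compare it to $\binom{n}{k} B_k$. Since $C_j \geq 1$ for all $j \geq 0$ and $C_{n-k} \le C_n \leq B_n$ is far too lossy, the better route is: note $C_k C_{n-k} \leq \binom{n}{k} C_k C_{n-k}$ trivially is false when the binomial coefficient is $1$, so instead I would split off the extreme terms. Concretely, $C_{n+1} = 2C_n + \sum_{k=1}^{n-1} C_k C_{n-k}$, and $B_{n+1} = B_n + n B_{n-1} + \sum_{k=2}^{n-1}\binom{n}{k}B_k + \ldots$; the cleanest comparison is to show $C_k C_{n-k} \le \binom{n}{k} B_k$ for each $k$ in range. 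For $k=0$ and $k=n$ this reads $C_n \le B_n$ (true by hypothesis for $n \ge 4$). For $1 \le k \le n-1$ one uses the crude bound $C_{n-k} \le C_{n-1} \le \binom{n}{k}$ for $k$ away from the endpoints, combined with $C_k \le B_k$; the interior binomial coefficients $\binom{n}{k} \geq n$ grow fast enough to absorb the Catalan factor.

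The main obstacle I anticipate is making the termwise bound $C_k C_{n-k} \le \binom{n}{k} B_k$ work uniformly at the boundary indices $k \in \{1, n-1\}$, where $\binom{n}{k} = n$ is smallest relative to the range; there one needs $C_{n-1} \cdot C_1 = C_{n-1} \le n \cdot B_1 = n$, which fails for large $n$ since $C_{n-1}$ grows exponentially. So the naive termwise approach breaks, and I would instead argue more globally: bound $\sum_{k} C_k C_{n-k} = C_{n+1}$ against $\sum_k \binom{n}{k} C_k \le \sum_k \binom{n}{k} B_k = B_{n+1}$, where the first inequality $C_{n+1} \le \sum_k \binom{n}{k} C_k$ is itself provable by induction (it says Catalan numbers are dominated by their binomial transform, which follows since $\binom{n}{k} \ge 1$ with strict inequality for $0 < k < n$, $n \ge 2$, giving enough slack), and the second uses $C_k \le B_k$ from the inductive hypothesis together with the base-case checks for small $k$. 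Chaining these two inequalities, with strictness entering from any interior term once $n \ge 4$, yields $C_{n+1} < B_{n+1}$ and completes the induction. An alternative, possibly shorter, route I would keep in reserve is to invoke the known asymptotics $C_n \sim 4^n / (n^{3/2}\sqrt{\pi})$ versus the superexponential growth of $B_n$ (e.g. via $B_n \ge (n/(e\ln n))^n$), which makes the inequality obvious for all large $n$, reducing the problem to a finite check; but since the paper wants an elementary, self-contained statement, the inductive argument above is preferable.
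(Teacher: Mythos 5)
Your plan has a genuine gap at its central step. After correctly diagnosing that the termwise comparison $C_kC_{n-k}\le\binom{n}{k}B_k$ fails at the boundary indices, you fall back on the chain $C_{n+1}\le\sum_{k=0}^{n}\binom{n}{k}C_k\le\sum_{k=0}^{n}\binom{n}{k}B_k=B_{n+1}$. The second inequality is fine (it only needs $C_k\le B_k$ for $k\le n$, which the induction supplies, with strictness coming from the $k=n$ term once $n\ge4$). But the first inequality --- that the Catalan numbers are dominated by their own binomial transform --- is exactly where the difficulty has been relocated, and your justification (``$\binom{n}{k}\ge1$ with strict inequality for $0<k<n$, giving enough slack'') is not a proof. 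The natural termwise comparison of $\sum_k C_kC_{n-k}$ against $\sum_k\binom{n}{k}C_k$ requires $C_{n-k}\le\binom{n}{k}$, which already fails at $k=0$ (it would say $C_n\le1$), and the symmetric pairing fails at $k=n$; one must show that the surplus from the interior terms absorbs an endpoint deficit of size $C_n-1$, which is a nontrivial claim in its own right (it is true --- the binomial transform of the Catalan numbers grows like $5^n$ versus $4^n$ --- but establishing it elementarily is at least as hard as the original lemma). So as written the induction does not close. Your reserve option via asymptotics ($C_n\sim4^n/(n^{3/2}\sqrt{\pi})$ against the superexponential growth of $B_n$) is sound but heavy, and reduces to a finite check that you would still have to carry out.

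For comparison, the paper's proof is a one-line combinatorial injection: $C_n$ counts the noncrossing partitions of $[n]$ and $B_n$ counts all partitions of $[n]$, so $C_n\le B_n$ always; for $n\ge4$ the partition of $[n]$ into the two blocks $\{2,4\}$ and $[n]\setminus\{2,4\}$ is crossing, which gives strictness. If you want to salvage an inductive argument, you would do better to avoid convolving two Catalan factors at all and instead bound $C_{n+1}=\frac{2(2n+1)}{n+2}C_n<4C_n$ against a lower bound for $B_{n+1}/B_n$; but the partition-counting argument makes any such bookkeeping unnecessary.
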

\begin{proof}
    It is well-known that $C_n$ counts the number of noncrossing partitions 
of $[n]$ and $B_n$ counts the total number of partitions of $[n]$. For these 
facts and the definitions of 
partitions and noncrossing partitions, see for example 
\cite[Section 1.1]{Mezo2020} and \cite{Simion2000}. For all $n\geq4$, the two subsets $\{2,4\}$ and $[n]\backslash\{2,4\}$ form a crossing partition of $[n]$ and hence $C_n<B_n$.
\end{proof}
\begin{lemma}\label{Lemma:Bell}
For all $n\geq3$, we have $B_n>2B_{n-1}$.
\end{lemma}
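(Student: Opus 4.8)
The plan is to use the standard recurrence for the Bell numbers,
$$B_n = \sum_{k=0}^{n-1}\binom{n-1}{k}B_k,$$
which writes $B_n$ as a positively weighted sum of all smaller Bell numbers (with $B_0=1$). First I would peel off the top term $k=n-1$, writing $B_n = B_{n-1} + \sum_{k=0}^{n-2}\binom{n-1}{k}B_k$. The claim $B_n > 2B_{n-1}$ then reduces to showing that the truncated sum $\sum_{k=0}^{n-2}\binom{n-1}{k}B_k$ strictly exceeds $B_{n-1}$.

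The key step is to compare this truncated sum termwise against the recurrence for $B_{n-1}$ itself, namely $B_{n-1} = \sum_{k=0}^{n-2}\binom{n-2}{k}B_k$. Applying Pascal's rule $\binom{n-1}{k}=\binom{n-2}{k}+\binom{n-2}{k-1}$ to each coefficient and reindexing the second piece gives
$$\sum_{k=0}^{n-2}\binom{n-1}{k}B_k = \sum_{k=0}^{n-2}\binom{n-2}{k}B_k + \sum_{k=1}^{n-2}\binom{n-2}{k-1}B_k = B_{n-1} + \sum_{k=1}^{n-2}\binom{n-2}{k-1}B_k.$$
Since $B_k\geq 1>0$ for all $k\geq 0$ and the index range $1\leq k\leq n-2$ is nonempty precisely when $n\geq 3$, the surplus sum is strictly positive. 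Hence $\sum_{k=0}^{n-2}\binom{n-1}{k}B_k > B_{n-1}$, and combining with the first step yields $B_n > B_{n-1}+B_{n-1} = 2B_{n-1}$.

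I do not anticipate a genuine obstacle; the only care needed is the bookkeeping in the index shift (the $k=0$ term of $\binom{n-2}{k-1}$ must drop out) and the observation that the hypothesis $n\geq 3$ is exactly what makes the surplus sum nonempty — which is why the inequality degenerates to equality at $n=2$, consistent with $B_2 = 2 = 2B_1$ in Table~\ref{first_table}. A more hands-on alternative would be to first establish the auxiliary bound $B_{n-1}\leq (n-1)B_{n-2}$ and then read off $B_n \geq B_{n-1} + (n-1)B_{n-2} + B_0 > 2B_{n-1}$ from the recurrence, but the Pascal-rule comparison above is cleaner and self-contained.
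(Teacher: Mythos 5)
Your proof is correct and follows essentially the same route as the paper: both start from the recurrence $B_n=\sum_{k=0}^{n-1}\binom{n-1}{k}B_k$, peel off the $k=n-1$ term, and compare the remaining sum against $B_{n-1}=\sum_{k=0}^{n-2}\binom{n-2}{k}B_k$. The only cosmetic difference is that the paper invokes the termwise bound $\binom{n-1}{k}\geq\binom{n-2}{k}$ (strict at $k=n-2$), whereas you make the surplus explicit via Pascal's rule.
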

\begin{proof}
    The Bell numbers $B_n$ satisfy the following recurrence relation \cite[p. 49]{EgeciogluGarsia2021}:
    \[
    B_n=\sum_{k=0}^{n-1}\binom{n-1}{k}B_k.
    \]
Let $n\geq3$. Then we have
    \begin{align*}
\hspace{3cm}B_n=&~B_{n-1}+\binom{n-1}{n-2}B_{n-2}+\binom{n-1}{n-3}B_{n-3}+\cdots+\binom{n-1}{0}B_0\\>
&~B_{n-1}+\binom{n-2}{n-2}B_{n-2}+\binom{n-2}{n-3}B_{n-3}+\cdots+\binom{n-2}{0}B_0\\=&~B_{n-1}+\sum_{k=0}^{n-2}\binom{n-2}{k}B_k=2B_{n-1}.\hspace{6.5cm}\qedhere
\end{align*}
\end{proof}

\section{A new proof of \cref{Theorem:ClassicalLeading123AND132}}\label{Section:NewProof}
Let $1\leq r\leq n$. The well-known bijection between $S_n(123)$ and $S_n(132)$ by Simion and Schmidt \cite{SimionSchmidt1985} preserves the 
leading term. 
We refer to \cite[Lemma 4.4]{Bona2022} for a proof of this fact. Hence, we have $|S_{n,r}(123)|=|S_{n,r}(132)|$. So we only need to show that $|S_{n,r}(123)|=b_{n,r}$. 

First, consider $r=1$. If $\tau\in S_{n,1}(123)$, then $(\tau(2),\tau(3),\ldots,\tau(n))$ must be a decreasing sequence and hence $\tau=(1,n,n-1,\ldots,2)$. Therefore, we have $|S_{n,1}(123)|=1=b_{n,1}$.

Now suppose $r\geq2$. We will need the following definition.

\begin{definition}\label{Definition:AscendIn123Avoiding}
For all $n\in\mathbb{N}$, define $a_{n}(i):[n]\to[n]$ as follows:
\begin{itemize}
\item[(i)] for all $i<n$, let $a_{n}(i)$ be the number of permutations $\tau\in S_n(123)$ such that $i$ is the smallest index with $\tau(i)<\tau(i+1)$, and
\item[(ii)] set $a_{n}(n)=1$.
\end{itemize}
\end{definition}
That is, if $i<n$, then $a_n(i)$ is the number of permutations $\tau\in S_n$ avoiding $123$ such that $\tau(1)>\tau(2)>\cdots>\tau(i-1)>\tau(i)<\tau(i+1)$. 
If $i=n$, then $a_n(n)=1$ 
because there is exactly one decreasing sequence $(n,n-1,\ldots,2,1)\in S_n$. Simion and Schmidt \cite[Lemma 2]{SimionSchmidt1985} proved the following result for $a_n(i)$:

\begin{lemma}\label{Lemma:AscendIn123Avoiding} For all $1\leq i\leq n$,
\[
a_n(i)=\binom{2n-i-1}{n-1}-\binom{2n-i-1}{n}.
\]
\end{lemma}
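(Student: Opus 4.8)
The plan is to give a self-contained bijective/counting proof rather than citing Simion–Schmidt directly, since the goal of this section is an elementary argument. First I would translate the definition: for $i<n$, $a_n(i)$ counts permutations $\tau \in S_n(123)$ whose first ascent occurs at position $i$, i.e. $\tau(1) > \tau(2) > \cdots > \tau(i) < \tau(i+1)$. The key structural observation is that in a $123$-avoiding permutation, once $\tau(1),\dots,\tau(i)$ is decreasing and $\tau(i+1) > \tau(i)$, the value $\tau(i+1)$ together with any later ascent bottom would have to avoid creating an increasing triple; more usefully, I would isolate what the decreasing prefix must be. Since $\tau(1) > \cdots > \tau(i)$ and $\tau(i) < \tau(i+1)$, and the whole permutation avoids $123$, no element after position $i$ can exceed $\tau(i+1)$ unless... — the cleanest route is: the set $\{\tau(1),\dots,\tau(i)\}$ must consist of $i$ values, all larger than $\tau(i)$ except $\tau(i)$ itself, and because $\tau(i) < \tau(i+1)$ forms an ascent, every value to the right of position $i$ that is greater than $\tau(i)$ must come before any value greater than $\tau(i+1)$ — in fact $123$-avoidance forces $\{\tau(1),\dots,\tau(i-1)\}$ to be the $i-1$ largest values among those "committed" so far, so $\tau(i)$ is small.

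The main technical step is to set up a recursion or a direct lattice-path bijection. I would map $\tau \in S_n(123)$ to a Dyck-type path (the standard bijection via left-to-right minima, or via the RSK/Catalan encoding), and track where the statistic "position of first ascent" goes. Under a suitable such bijection, $\tau$ with first ascent at position $i$ should correspond to a path of length $2n$ whose first "down-then-up" reversal — or first return below a certain level — happens at a prescribed coordinate depending on $i$; the count of such paths is a difference of two binomial coefficients by the reflection principle, which is exactly the shape of the claimed answer $\binom{2n-i-1}{n-1} - \binom{2n-i-1}{n}$. Concretely, I expect the correspondence to produce paths from $(0,0)$ to $(2n-i-1, \text{something})$ staying weakly above the axis, whose count is $\binom{2n-i-1}{n-1}-\binom{2n-i-1}{n}$ by the ballot-type reflection argument. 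An equivalent and perhaps cleaner approach: establish the recurrence $a_n(i) = \sum_{j} a_{n-1}(j)$ over an appropriate range (by deleting the entry $1$, or by a first-ascent-to-first-ascent decomposition), verify the base cases $a_n(n)=1$ and $a_n(1)=C_{n-1}$, and then check that $\binom{2n-i-1}{n-1}-\binom{2n-i-1}{n}$ satisfies the same recurrence and base cases via the Pascal identity.

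The main obstacle I anticipate is pinning down the exact decomposition that yields a clean recurrence: deleting a single entry from a $123$-avoiding permutation does not in general control the position of the first ascent, so I would need to be careful about which entry to remove (likely $\tau(i+1)$, the top of the first ascent, or the value $1$) and to argue that the resulting map is a bijection onto $123$-avoiding permutations of $S_{n-1}$ with first ascent in a controlled range — and that the fibers have the right sizes. Handling the boundary cases ($i = n$, $i = n-1$, and small $n$) separately will be routine once the recursion is correct. Once the recurrence and base cases match those of $\binom{2n-i-1}{n-1}-\binom{2n-i-1}{n}$, the lemma follows by induction on $n$.
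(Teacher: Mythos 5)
The paper does not actually prove this lemma; it imports it verbatim from Simion and Schmidt \cite[Lemma 2]{SimionSchmidt1985}, so there is no internal argument to compare yours against. Judged on its own terms, what you have written is a plan rather than a proof, and the gap is that none of the candidate strategies is carried out. The structural analysis in your first paragraph breaks off mid-sentence (``no element after position $i$ can exceed $\tau(i+1)$ unless\dots'') and the conclusion you then assert --- that $\{\tau(1),\dots,\tau(i-1)\}$ must be the $i-1$ largest ``committed'' values --- is neither made precise nor derived. The one structural fact that is easy and genuinely useful here, namely that $123$-avoidance together with the ascent $\tau(i)<\tau(i+1)$ forces $\tau(i+1)=\max\{\tau(i),\tau(i+1),\dots,\tau(n)\}$, is not isolated, and no decomposition of the permutation is built on it.

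Each of your two proposed routes then stops exactly where the work begins. For the lattice-path route, you never specify the bijection from $S_n(123)$ to paths, never identify what the ``first ascent at position $i$'' statistic becomes on the path side, and never set up the reflection argument (endpoints, forbidden boundary, which paths are being subtracted); the phrase ``I expect the correspondence to produce paths \dots whose count is $\binom{2n-i-1}{n-1}-\binom{2n-i-1}{n}$'' is the conclusion, not an argument for it. For the recurrence route, the relation $a_n(i)=\sum_j a_{n-1}(j)$ is stated with an unspecified range of $j$, and you yourself point out the obstacle that deleting an entry does not control the position of the first ascent --- but you do not resolve it, so there is no recurrence to check the binomial difference against, and the proposed induction cannot be run. (Your base cases $a_n(n)=1$ and $a_n(1)=C_{n-1}$ are consistent with the formula, but the latter is also asserted without proof.) To turn this into a proof you must commit to one decomposition, establish it as a bijection with correctly sized fibers, and then do the binomial verification; as it stands every load-bearing step is conditional.
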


Let $\mathcal{P}$ be a subset of $S_{n,r}$ such that every $\tau\in\mathcal{P}$ has the following properties:
\begin{itemize}
    \item[(i)] the subpermutation $\tau'$ of $\tau$ on $\{1,2,\ldots,r-1\}$ avoids $123$;
    \item[(ii)] the subpermutation $\tau''$ of $\tau$ on $\{r+1,r+2,\ldots,n\}$ is $(n,n-1,\ldots,r+1)$;
    \item[(iii)] if $r>2$ and $i<r-1$ is the smallest index with $\tau'(1)>\tau'(2)>\cdots>\tau'(i)<\tau'(i+1)$, then $\{r+1,r+2,\ldots,n\}\subseteq\mathcal{A}_\tau(\tau'(i+1))$.
\end{itemize}
For Property (iii), we do not impose any extra condition on the positions of $\{r+1,r+2,\ldots,n\}$ when $\tau'=(r-1,r-2,\ldots,1)$.

We first show that
\[
|\mathcal{P}|=\sum_{i=1}^{r-1}\binom{i+n-r}{i}a_{r-1}(i).
\]
Let $\tau\in\mathcal{P}$. If $r=2$, then the subpermutation of $\tau$ on $\{2,3,\ldots,n\}$ is $(2,n,n-1,\ldots,3)$ and hence there are $n-1$ possible locations for $1$. It is not hard to see that all these possible locations guarantee that $\tau$ avoids the pattern $123$. It follows that, in this case, we have $|\mathcal{P}|=n-1$. It is easy to see that $a_1(1)=1$ and hence $|\mathcal{P}|=\sum_{i=1}^{2-1}\binom{i+n-2}{i}a_1(i)=n-1$. If $r>2$, then for all $i\in\{1,2,\ldots,r-2\}$ and a fixed $\tau'$ with $\tau'(1)>\tau'(2)>\cdots>\tau'(i)<\tau'(i+1)$, the number of shuffles of $(\tau'(1),\tau'(2),\ldots,\tau'(i))$ and $\tau''=(n,n-1,\ldots,r+1)$ is $\binom{i+n-r}{i}$. If $\tau'=(r-1,r-2,\ldots,1)$, then the number of shuffles of $\tau'$ and $\tau''$ is $\binom{r-1+n-r}{r-1}$. As the number of such $\tau'$ is $a_{r-1}(i)$ for all $i\in\{1,2,\ldots,r-1\}$, we have $|\mathcal{P}|=\sum_{i=1}^{r-1}\binom{i+n-r}{i}a_{r-1}(i)$.

It remains to show that $S_{n,r}(123)=\mathcal{P}$. Let $\tau\in S_{n,r}(123)$, and let $\tau'$ be the subpermutation of 
$\tau$ on $\{1,2,\ldots,r-1\}$ and $\tau''$ be the subpermutation of 
$\tau$ on $\{r+1,r+2,\ldots,n\}$. Since $\tau$ avoids $123$, 
$\tau'$ avoids $123$ as well. We now show that $\tau''$ avoids $12$. 
If this is not the case then
there exist $a<b\leq n-r$ such that $\tau''(a)<\tau''(b)$. 
Since $\tau''(a)>r$, $r\tau''(a)\tau''(b)$ is a 123 pattern, and 
this is a contradiction. Therefore $\tau''=(n,n-1,\ldots,r+1)$. Now suppose $r>2$ and 
let $i<r-1$ be the smallest index such 
that $\tau'(1)>\tau'(2)>\cdots>\tau'(i)<\tau'(i+1)$. We still need to show that $\{r+1,r+2,\ldots,n\}\subseteq\mathcal{A}_\tau(\tau'(i+1))$. Suppose, by way of contradiction, $\{r+1,r+2,\ldots,n\}\not\subseteq\mathcal{A}_\tau(\tau'(i+1))$. Then there exists $a\in\{r+1,r+2,\ldots,n\}\cap\mathcal{D}_\tau(\tau'(i+1))$. 
Since
$\tau'(i)\tau'(i+1)a$ is a 123 pattern, this would be a contradiction. Hence, we have $S_{n,r}(123)\subseteq\mathcal{P}$.

Now let $\tau\in\mathcal{P}$. We need to show that $\tau\in S_{n,r}(123)$. Let $\tau'$ be the subpermutation of 
$\tau$ on $\{1,2,\ldots,r-1\}$ and $\tau''$ be the subpermutation of 
$\tau$ on $\{r+1,r+2,\ldots,n\}$. If $r>2$, then let $i<r-1$ be the smallest index such that $\tau'(1)>\tau'(2)>\cdots>\tau'(i)<\tau'(i+1)$; if $\tau'=(r-1,r-2,\ldots,1)$, then we set $i=r-1$. Since $\tau\in\mathcal{P}$, $\tau=(r,\alpha(1),\alpha(2),\ldots,\alpha(n-r+i),\tau'(i+1),\tau'(i+2),\ldots,\tau'(r-1))$ where $\alpha=(\alpha(1),\alpha(2),\ldots,\alpha(n-r+i))$ is a shuffle of $(\tau'(1),\tau'(2),\ldots,\tau'(i))$ and $\tau''=(n,n-1,\ldots,r+1)$. We need to show that any subpermutation $abc$ of $\tau$ is not a $123$ pattern. Suppose, by way of contradiction, there exists a subpermutation $abc$ of $\tau$ which is a $123$ pattern. Then $a<b<c$. We split into two cases:

Case 1: $a\geq r$. Then $c>b>r$. It follows that $bc$ is an increasing subpermutation of $\tau''$. But this contradicts the fact that $\tau''=(n,n-1,\ldots,r+1)$.

Case 2: $a<r$. Since $\tau'$ avoids $123$, either $b>r$ or $c>r$. If $b>r$, then $c>b>r$. Using a similar argument as in Case 1, we have a contradiction. So we suppose $b<r$ and $c>r$. Here $b\neq r$ because $b\in D_\tau(a)$. Since $ab$ is an increasing subpermutation of $\tau'$, we have $b=\tau'(j)$ for some $j\geq i+1$. Hence $c\in \mathcal{A}_\tau(\tau'(i+1)) \subseteq \mathcal{A}_\tau(b)$ which is again a contradiction.

This proves that $\mathcal{P}\subseteq S_{n,r}(123)$, and, consequently, 
$S_{n,r}(123)=\mathcal{P}$. Therefore, by \cref{Lemma:AscendIn123Avoiding},  \[
|S_{n, r}(123)|=|\mathcal{P}|=\sum_{i=1}^{r-1}\binom{i+n-r}{i}a_{r-1}(i)=\sum_{i=1}^{r-1}\binom{i+n-r}{i}\left[\binom{2r-i-3}{r-2}-\binom{2r-i-3}{r-1}\right].\]
It remains to show that $\sum_{i=1}^{r-1}\binom{i+n-r}{i}\left[\binom{2r-i-3}{r-2}-\binom{2r-i-3}{r-1}\right]=b_{n,r}$. First, by the definition of binomial coefficients, we have
\[
\binom{2r-i-3}{r-2}-\binom{2r-i-3}{r-1}=\binom{2r-i-3}{r-2}-\frac{r-i-1}{r-1}\binom{2r-i-3}{r-2}=\frac{i}{r-1}\binom{2r-i-3}{r-2}.
\]
Hence
\[
\begin{split}
\sum_{i=1}^{r-1}\binom{i+n-r}{i}\left[\binom{2r-i-3}{r-2}-\binom{2r-i-3}{r-1}\right]=&\sum_{i=1}^{r-1}\binom{i+n-r}{i}\frac{i}{r-1}\binom{2r-i-3}{r-2}\\=&\sum_{i=1}^{r-1}\frac{n-r+1}{i}\binom{i+n-r}{i-1}\frac{i}{r-1}\binom{2r-i-3}{r-2}\\=&\frac{n-r+1}{r-1}\sum_{i=1}^{r-1}\binom{i+n-r}{i-1}\binom{2r-i-3}{r-2}.
\end{split}
\]
By a variation of Vandermonde's identity,
\[
\sum_{i=1}^{r-1} \binom{i+n-r}{i-1} \binom{2r-i-3}{r-2} = \binom{n+r-2}{r-2}.
\]
(One way to see this is by comparing the coefficients of $x^{r-2}$ on the two sides of the identity
$(1-x)^{-(n-r+2)} (1-x)^{-(r-1)} = (1-x)^{-(n+1)}$.)
Putting it all together,
\[
\begin{split}
\sum_{i=1}^{r-1}\binom{i+n-r}{i}\left[\binom{2r-i-3}{r-2}-\binom{2r-i-3}{r-1}\right]=&\frac{n-r+1}{r-1}\binom{n+r-2}{r-2}\\=&\frac{n-r+1}{n+r-1}\binom{n+r-1}{r-1}=b_{n,r}.
\end{split}
\]

This completes the proof of \cref{Theorem:ClassicalLeading123AND132}.


\section{Single patterns of length three}\label{Section:Single3}
In this section, we enumerate $S_{n,(c_1,c_2,\ldots,c_t)}(\sigma)$ for $\sigma\in S_3$. By \cref{lemma:complements}, it suffices to enumerate permutations avoiding the patterns $123$, $132$, and $231$. We start with the pattern $231$. The key features about $S_{n,(c_1,c_2,\ldots,c_t)}(231)$ are that the enumeration is related to the order statistics of $\{c_1,c_2,\ldots,c_t\}$ and there is a `block' structure for all $\tau\in S_{n,(c_1,c_2,\ldots,c_t)}(231)$.

\begin{theorem}\label{Theorem:LeadingTerms231}
    If $c_i<c_j$ for some $1\leq i<j\leq t$ and there exists $\alpha<c_i$ such that $\alpha\notin\{c_1,c_2,\ldots,c_t\}$, then $|S_{n,(c_1,c_2,\ldots,c_t)}(231)|=0$; otherwise, we have
\begin{equation}\label{Qformula}
    |S_{n,(c_1,c_2,\ldots,c_t)}(231)|=\prod_{k=1}^{t+1}C_{c_{(k)}-c_{(k-1)}-1},
\end{equation}
    where $C_i$ is the $i$th Catalan number, $c_{(0)}=0$, $c_{(t+1)}=n+1$, and $c_{(1)}<c_{(2)}<\cdots<c_{(t)}$ are the order statistics of $\{c_1,c_2,\ldots,c_t\}$.
\end{theorem}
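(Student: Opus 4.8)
The plan is to dispatch the two alternatives in the statement separately. The zero case is immediate: if $c_i<c_j$ with $i<j$ and some $\alpha<c_i$ is not among $c_1,\dots,c_t$, then in every $\tau\in S_{n,(c_1,\dots,c_t)}$ the value $\alpha$ occupies a position $p>t\ge j$, and $(\tau(i),\tau(j),\tau(p))=(c_i,c_j,\alpha)$ is a $231$ pattern since $\alpha<c_i<c_j$; hence $S_{n,(c_1,\dots,c_t)}(231)=\emptyset$.

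For the main formula I would introduce a block decomposition of the suffix. Let $D=[n]\setminus\{c_1,\dots,c_t\}$ and, for $1\le k\le t+1$, let $G_k=\{c_{(k-1)}+1,\dots,c_{(k)}-1\}$, so that $D$ is the disjoint union of $G_1,\dots,G_{t+1}$ and $|G_k|=c_{(k)}-c_{(k-1)}-1$. The claim to prove (in the remaining case, and under \cref{Convention:AvoidanceForLeadingTerms}) is that $\tau\in S_{n,(c_1,\dots,c_t)}$ avoids $231$ if and only if its suffix $(\tau(t+1),\dots,\tau(n))$ is a concatenation $B_1B_2\cdots B_{t+1}$ in which each $B_k$ is a $231$-avoiding arrangement of $G_k$. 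Granting this, $\tau\mapsto(B_1,\dots,B_{t+1})$ is a bijection onto the product of the sets of $231$-avoiding arrangements of the $G_k$ (injectivity is clear because the prefix together with the $B_k$ recovers $\tau$), and since a permutation of a finite set of integers avoids $231$ exactly when its standardization does (\cref{Lemma:StandardizationPreservesPatternAvoidance}), \cref{Theorem:ClassicalSingle3} yields $|S_{n,(c_1,\dots,c_t)}(231)|=\prod_{k=1}^{t+1}C_{|G_k|}=\prod_{k=1}^{t+1}C_{c_{(k)}-c_{(k-1)}-1}$, which is \eqref{Qformula}.

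The block-decomposition claim splits into two implications. For the direction ``$\tau$ avoids $231$ $\Rightarrow$ block form'', the key point is that whenever $a<b$, every element of $G_a$ precedes every element of $G_b$ in $\tau$: if instead $v\in G_b$ preceded $u\in G_a$, then $u<c_{(a)}\le c_{(b-1)}<v$, and $c_{(a)}$ — which is one of the $c_i$ — lies in the prefix and hence before $v$, so $(c_{(a)},v,u)$ would be a $231$ pattern. This forces the suffix to list $G_1,\dots,G_{t+1}$ in increasing order of index, and each $B_k$, being a subword of a $231$-avoider, avoids $231$. For the converse I would assume the suffix has the stated form and suppose a $231$ pattern exists, say at positions $p_1<p_2<p_3$ with values $z<x<y$. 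Since the prefix precedes the suffix, only four configurations can occur: all three entries in the prefix, ruled out by \cref{Convention:AvoidanceForLeadingTerms}; all three inside one block $B_k$, ruled out by the choice of $B_k$; $x$ in the prefix with $y,z$ in the suffix, or all three in the suffix but spanning more than one block, both ruled out by comparing $x,y,z$ with the order statistics $c_{(k)}$ (the suffix blocks appear in order, so $y$ lies in a block no later than $z$'s, while $x<y<c_{(b)}$ pushes $y$ past the index attached to $x$ and $z<x$ keeps $z$ at or before that index, a contradiction); and $x,y$ in the prefix with $z$ in the suffix, which would produce an ascent $c_i=x<y=c_j$ with $i<j$ together with a suffix value $z<c_i$ — impossible in the remaining case, which says precisely that every value below $c_i$ already appears in the prefix. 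This last configuration is the only place the ``otherwise'' hypothesis is used.

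I expect the main obstacle to be the converse direction: arranging the case analysis so that it is transparently exhaustive — tracking in particular that once a pattern entry lands in the suffix so do all later entries, and that the suffix blocks occur in increasing gap-order — and carrying out the comparisons with the order statistics $c_{(k)}$ without index slips. Conceptually, the decisive observation is the one in the forward direction: the gaps are forced to appear not merely contiguously but in increasing order of index. Everything else reduces to \cref{Theorem:ClassicalSingle3}, \cref{Lemma:StandardizationPreservesPatternAvoidance}, and routine bookkeeping.
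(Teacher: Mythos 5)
Your proposal is correct and follows essentially the same route as the paper: the same block decomposition of the suffix into the gaps $G_k$ between consecutive order statistics, the same observation that $231$-avoidance forces the gaps to appear as contiguous blocks in increasing order (witnessed by the pattern $(c_{(a)},v,u)$, which is the paper's $(c_{(\ell-1)},y,x)$ up to the choice of which order statistic to use as the ``2''), and the same four-way case analysis for the converse, with the ``otherwise'' hypothesis used exactly where the paper uses it. No substantive difference to report.
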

\begin{proof}
    If $c_i<c_j$ for some $1\leq i<j\leq t$ and there exists $\alpha<c_i$ such that $\alpha\notin\{c_1,c_2,\ldots,c_t\}$, then $c_ic_j\alpha$ is a $231$ pattern. Therefore, $|S_{n,(c_1,c_2,\ldots,c_t)}(231)|=0$.

    Now suppose otherwise. We will build a set $\mathcal{Q}$ 
whose cardinality is given by the right hand side of (\ref{Qformula}) 
and then show that $\mathcal{Q}=S_{n,(c_1,c_2,\ldots,c_t)}(231)$. Let $\mathcal{Q}$ be the subset of $S_{n,(c_1,c_2,\ldots,c_t)}$ such that every $\tau\in\mathcal{Q}$ has the following properties:
\begin{itemize}
    \item[(i)] for all $k,\ell\in[t+1]$ with $k<\ell$, if \[x\in\left\{c_{(k-1)}+1,c_{(k-1)}+2,\ldots,c_{(k)}-1\right\}~\text{and}~y\in\left\{c_{(\ell-1)}+1,c_{(\ell-1)}+2,\ldots,c_{(\ell)}-1\right\},\] then $x\in\mathcal{A}_\tau(y)$, and
    \item[(ii)] for all $k\in[t+1]$, the subpermutation on $\left\{c_{(k-1)}+1,c_{(k-1)}+2,\ldots,c_{(k)}-1\right\}$ avoids $231$.
\end{itemize}
That is, for each $\tau\in\mathcal{Q}$, $(\tau(t+1),\tau(t+2),\ldots,\tau(n))$ is the concatenation of $t+1$ (some possibly empty) $231$-avoiding permutations, which we call $231$-avoiding blocks. 
For all $k\in[t+1]$, the $k$th block is a $231$-avoiding permutation on $\left\{c_{(k-1)}+1,c_{(k-1)}+2,\ldots,c_{(k)}-1\right\}$. 
Since for all $k\in[t+1]$,  \[\left|\left\{c_{(k-1)}+1,c_{(k-1)}+2,\ldots,c_{(k)}-1\right\}\right|=c_{(k)}-c_{(k-1)}-1,\] by \cref{Theorem:ClassicalSingle3}, the number of $231$-avoiding permutations on $\left\{c_{(k-1)}+1,c_{(k-1)}+2,\ldots,c_{(k)}-1\right\}$ is $C_{c_{(k)}-c_{(k-1)}-1}$. Hence we have $|\mathcal{Q}|=\prod_{k=1}^{t+1}C_{c_{(k)}-c_{(k-1)}-1}$.

Let $\tau\in S_{n,(c_1,c_2,\ldots,c_t)}(231)$. Then every subpermutation of $\tau$ avoids $231$. So, $\tau$ satisfies Property (ii). Now we show that $\tau$ satisfies Property (i). Suppose not. Then there exist $k,\ell\in[t]$ with $k<\ell$, $x\in\left\{c_{(k-1)}+1,c_{(k-1)}+2,\ldots,c_{(k)}-1\right\}$, and $y\in\left\{c_{(\ell-1)}+1,c_{(\ell-1)}+2,\ldots,c_{(\ell)}-1\right\}$, such that $x\in\mathcal{D}_\tau(y)$. It follows that $c_{(\ell-1)}yx$ is a $231$ 
pattern, which is a contradiction. Hence $\tau\in \mathcal{Q}$. This proves that $S_{n,(c_1,c_2,\ldots,c_t)}(231)\subseteq\mathcal{Q}$.

Now let $\tau\in\mathcal{Q}$. To show that $\tau\in S_{n,(c_1,c_2,\ldots,c_t)}(231)$, it suffices to show that any subpermutation $abc$ of $\tau$ is not a $231$ pattern. Suppose, by way of contradiction, that a subpermutation $abc$ of $\tau$ is a 231 pattern. Then we must have $c<a<b$ and $c\in D_\tau(b)$. We split into four cases:

Case 1: $a,b,c\notin\{c_1,c_2,\ldots,c_t\}$. Since $(\tau(t+1),\tau(t+2),\ldots,\tau(n))$ is a concatenation of $t+1$ (some possibly empty) $231$-avoiding blocks, $c$ must be in a block after the block $a$ is in. Since $c<a$, this contradicts Property (i).

Case 2: $a\in\{c_1,c_2,\ldots,c_t\}$ and $b,c\notin\{c_1,c_2,\ldots,c_t\}$. Then we have $a=c_{(k)}$ for some $k\in[t]$. Since $c<a<b$, $b$ and $c$ must be in two different blocks. Since $c\in D_\tau(b)$, $c$ is in a block after the block $b$ is in which contradicts Property (i).

Case 3: $a,b\in\{c_1,c_2,\ldots,c_t\}$ and $c\notin\{c_1,c_2,\ldots,c_t\}$. Then $a=c_i$ and $b=c_j$ with $1\leq i<j\leq t$, and $c<a$. This is a contradiction. 

Case 4: $a,b,c\in\{c_1,c_2,\ldots,c_t\}$. Then $abc$ is a subpermutation of $(c_1,c_2,\ldots,c_t)$ which contradicts our convention that $(c_1,c_2,\ldots,c_t)$ avoids $231$.

Hence, we have $\mathcal{Q}\subseteq S_{n,(c_1,c_2,\ldots,c_t)}(231)$.
\end{proof}

\begin{remark}
For all $n\geq3$, by \cref{Theorem:ClassicalSingle3} and \cref{Theorem:LeadingTerms231} with $t=1$, we have
\[
C_n=|S_n(231)|=\sum_{r=1}^n|S_{n,r}(231)|=\sum_{r=1}^nC_{n-r}C_{r-1}.
\]
This offers an alternative interpretation 
for the well-known recurrence relation for the 
Catalan numbers, see \cite[Section 3.2]{EgeciogluGarsia2021} and \cite[Section 1.2]{Stanley2015}. 
\end{remark}

In contrast to the $231$ pattern, the expressions for the $123$ and $132$ patterns are related to the minimum of $\{c_1,c_2,\ldots,c_t\}$. Recall that for a subset $A\subseteq[n]$, the \textit{standardization} of a permutation $\tau=(\tau(1),\tau(2),\ldots,\tau(|A|))$ is the permutation $s(\tau)\in S_{|A|}$ obtained by replacing the $i$th smallest entry in $\tau$ with $i$ for all $i$. We will need the following result:

\begin{lemma}\label{Lemma:StandardizationLeadingTerm}
Suppose $A\subseteq[n]$ such that there exists $r\in[n]$ with $[r]\subseteq A$, and $\sigma\in S_k$ with $k \leq |A|$. Then $s(S_{A,r}(\sigma))=S_{|A|,r}(\sigma)$, where $s(S_{A,r}(\sigma))=\{s(\tau):\tau\in S_{A,r}(\sigma)\}$.
\end{lemma}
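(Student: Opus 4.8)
The plan is to show the two inclusions $s(S_{A,r}(\sigma))\subseteq S_{|A|,r}(\sigma)$ and $S_{|A|,r}(\sigma)\subseteq s(S_{A,r}(\sigma))$ separately, with the key observation being that the hypothesis $[r]\subseteq A$ forces the standardization map to fix the value $r$. For the forward inclusion, let $\tau\in S_{A,r}(\sigma)$. Since $\tau$ avoids $\sigma$, \cref{Lemma:StandardizationPreservesPatternAvoidance} gives that $s(\tau)$ avoids $\sigma$, so it remains only to check that $s(\tau)(1)=r$. Because $\tau(1)=r$ and $[r]\subseteq A$, the value $r$ is exactly the $r$th smallest element of $A$ (the elements $1,2,\ldots,r$ all lie in $A$, and nothing smaller than $r$ is missing), so standardization replaces $\tau(1)=r$ by $r$. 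Hence $s(\tau)\in S_{|A|,r}(\sigma)$.

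For the reverse inclusion, I would use the matching-permutation construction: given $\rho\in S_{|A|,r}(\sigma)$, let $\tau=\rho'$ be the matching permutation of $\rho$ on $A$, i.e.\ if $\rho(i)=j$ then $\tau(i)$ is the $j$th smallest element of $A$. The excerpt already records that matching permutations preserve pattern avoidance, so $\tau$ avoids $\sigma$; moreover $s(\tau)=\rho$ by construction, so $\tau\in S_{A}$ maps to $\rho$ under $s$. It remains to verify $\tau(1)=r$: since $\rho(1)=r$ and (as above) the $r$th smallest element of $A$ is $r$ itself, we get $\tau(1)=r$, so $\tau\in S_{A,r}(\sigma)$ and $\rho=s(\tau)\in s(S_{A,r}(\sigma))$.

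The only real subtlety — and the step I would state most carefully — is the claim that the hypothesis $[r]\subseteq A$ makes $r$ the $r$th order statistic of $A$, since everything else then follows from the two lemmas already available (\cref{Lemma:StandardizationPreservesPatternAvoidance} and the matching-permutation remark). This is immediate: $|\{a\in A: a\le r\}|=|[r]|=r$ because $[r]\subseteq A\subseteq[n]$. I would also remark in passing that $s$ and the matching-permutation map on $A$ are mutually inverse bijections between $S_A$ and $S_{|A|}$, which makes the set equality $s(S_{A,r}(\sigma))=S_{|A|,r}(\sigma)$ transparent once the two inclusions are in place; this bijection also justifies later uses where one counts $S_{A,r}(\sigma)$ by counting $S_{|A|,r}(\sigma)$.
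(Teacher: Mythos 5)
Your proposal is correct and follows essentially the same route as the paper's proof: both directions are handled identically, using \cref{Lemma:StandardizationPreservesPatternAvoidance} for the forward inclusion and the matching-permutation construction for the reverse, with the key observation in each case being that $[r]\subseteq A$ forces the $r$th smallest element of $A$ to be $r$ itself. Nothing is missing.
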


\begin{proof}
Let $\tau\in S_{A,r}(\sigma)$. Since $[r]\subseteq A$, $\tau(1)$ is the $r$th smallest number in $\tau$. So $s(\tau)\in S_{|A|,r}$. By \cref{Lemma:StandardizationPreservesPatternAvoidance}, we have $s(S_{A,r}(\sigma))\subseteq S_{|A|,r}(\sigma)$.

Now let $\tau\in S_{|A|,r}(\sigma)$. Let $a_1<a_2<\cdots<a_{|A|}$ be the elements in $A$. Since $[r]\subseteq A$, we have $a_r=r$. Let $\tau'$ be the matching permutation of $\tau$ on $A$. Since $\tau(1)=r$, we have $\tau'(1)=a_r=r$. It is not hard to see that $\tau'$ avoids $\sigma$. Hence $\tau'\in S_{A,r}(\sigma)$. By our construction, we also have $s(\tau')=\tau$. So $\tau\in s(S_{A,r}(\sigma))$. Hence, $S_{|A|,r}(\sigma)\subseteq s(S_{A,r}(\sigma))$.
\end{proof}
\begin{theorem}\label{Theorem:123LeadingTerms}
    If $c_i<c_j$ for some $1\leq i<j\leq t$ and there exists $\alpha>c_j$ such that $\alpha\notin\{c_1,c_2,\ldots,c_t\}$, then $|S_{n,(c_1,c_2,\ldots,c_t)}(123)|=0$; otherwise, we have
    \[
    |S_{n,(c_1,c_2,\ldots,c_t)}(123)|=|S_{n-t+1,\min\{c_1,c_2,\ldots,c_t\}}(123)|=b_{n-t+1,\min\{c_1,c_2,\ldots,c_t\}}.
    \]
\end{theorem}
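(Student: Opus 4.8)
The proof naturally splits into a zero case and a bijection case, mirroring the structure of Theorem~\ref{Theorem:LeadingTerms231}. First I would dispense with the vanishing case: if $c_i<c_j$ for some $i<j$ and some $\alpha>c_j$ lies outside $\{c_1,\ldots,c_t\}$, then wherever $\alpha$ appears in the suffix, the triple $c_i c_j \alpha$ is a $123$ pattern, so $S_{n,(c_1,\ldots,c_t)}(123)=\emptyset$. The contrapositive gives the structural fact I will use throughout: if the set is nonempty, then every $c_j$ with a smaller $c_i$ to its left is ``large'' in the sense that all of $\{c_j+1,\ldots,n\}$ already occurs among the $c$'s; more usefully, letting $m=\min\{c_1,\ldots,c_t\}$, the ascending steps among the $c_i$'s force the missing values to be confined, and in particular the only values available to be placed in the suffix, together with $m$, behave like a fresh permutation of size $n-t+1$.

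The heart is constructing the claimed bijection between $S_{n,(c_1,\ldots,c_t)}(123)$ and $S_{n-t+1,m}(123)$. Let $M=\{c_1,\ldots,c_t\}$ and let $m=\min M$. The key observation is that in any $\tau\in S_{n,(c_1,\ldots,c_t)}(123)$, the entries of $M\setminus\{m\}$ that contribute an ascent (i.e.\ those $c_j$ preceded by a smaller $c_i$) cannot participate in any $123$ pattern except as the ``$2$'' following $m$ as the ``$1$'' — and the nonvanishing hypothesis has already removed all their potential ``$3$''s. So, roughly, deleting $M\setminus\{m\}$ from positions $1,\ldots,t$ and leaving a single leading entry (corresponding to $m$) in position $1$, then standardizing the remaining sequence on the ground set $(\{1,\ldots,n\}\setminus M)\cup\{m\}$ (which has size $n-t+1$ and contains $[m]$ of its own... more precisely contains an initial segment up to the rank of $m$), yields a permutation in $S_{n-t+1,r}(123)$ where $r$ is the rank of $m$ in that ground set. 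One then checks, via Lemma~\ref{Lemma:StandardizationLeadingTerm} and Lemma~\ref{Lemma:StandardizationPreservesPatternAvoidance}, that $r=m$: because all of $\{1,2,\ldots,m-1\}$ avoids $M$ (as $m$ is the minimum of $M$), these $m-1$ values survive the deletion and remain the $m-1$ smallest, so $m$ has rank $m$ in the reduced ground set. The inverse map reinserts the deleted entries of $M$ in their forced positions (determined by the order statistics of $M$ and the nonvanishing constraints) and matches back up via the matching-permutation construction; Lemma~\ref{Lemma:StandardizationLeadingTerm} is exactly the tool that makes standardization and this reinsertion mutually inverse while preserving $123$-avoidance. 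Once the bijection is established, $|S_{n,(c_1,\ldots,c_t)}(123)|=|S_{n-t+1,m}(123)|=b_{n-t+1,m}$ follows from Theorem~\ref{Theorem:ClassicalLeading123AND132}.

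The step I expect to be the main obstacle is verifying that the deletion map is well-defined and injective, and especially that its image is \emph{all} of $S_{n-t+1,m}(123)$ rather than a proper subset — i.e.\ that every $123$-avoider with leading term $m$ and size $n-t+1$ lifts back to a valid $\tau$ with the prescribed prefix. The subtlety is bookkeeping the forced locations of the entries of $M$ in the lifted permutation: one must argue that the ascents within $(c_1,\ldots,c_t)$ pin down exactly where each $c_j$ must go in the suffix region (all its larger ``partners'' must precede it), and that placing them there cannot create a $123$ pattern with the reinserted suffix, using the nonvanishing hypothesis in an essential way. I would handle this by a careful case analysis on a putative $123$ pattern $abc$ in the lifted permutation according to how many of $a,b,c$ lie in $M$, closely parallel to Cases~1--4 in the proof of Theorem~\ref{Theorem:LeadingTerms231}, and I would set up the matching permutation on the ground set $(\{1,\ldots,n\}\setminus M)\cup\{m\}$ at the outset so that Lemma~\ref{Lemma:StandardizationLeadingTerm} can be invoked verbatim.
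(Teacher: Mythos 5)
Your plan is correct and follows essentially the same route as the paper: dispose of the vanishing case with the triple $c_ic_j\alpha$, collapse the prefix to its minimum $m$ by passing to the subpermutation on $([n]\setminus\{c_1,\ldots,c_t\})\cup\{m\}$, verify this is a bijection onto $S_{A,m}(123)$ (with surjectivity resting on the nonvanishing hypothesis via a case split on a putative $123$ pattern), and finish with \cref{Lemma:StandardizationLeadingTerm} since $[m]\subseteq A$ forces the standardized leading term to be $m$. The only cosmetic slip is the remark about reinserting entries of $M$ ``in the suffix region'' in forced positions --- they simply return to the prescribed prefix positions --- but this does not affect the argument.
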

\begin{proof}
    If $c_i<c_j$ for some $i<j$ and there exists $\alpha>c_j$ such that $\alpha\notin\{c_1,c_2,\ldots,c_t\}$, then $c_ic_j\alpha$ is a $123$ pattern. Therefore, $|S_{n,(c_1,c_2,\ldots,c_t)}(123)|=0$.

    Now suppose otherwise. For simplicity, we write $x=\min\{c_1,c_2,\ldots,c_t\}$ and \[A=([n]\backslash\{c_1,c_2,\ldots,c_t\})\cup\{x\}.\] Consider the map $f:S_{n,(c_1,c_2,\ldots,c_t)}(123)\to S_{A,x}(123)$ such that for all $\tau\in S_{n,(c_1,c_2,\ldots,c_t)}(123)$,
    \[
    f(\tau)=(x,\tau(t+1),\tau(t+2),\ldots,\tau(n)).
    \]
    This is well defined since $(x,\tau(t+1),\tau(t+2),\ldots,\tau(n))$ is a subpermutation of $\tau$. If $\tau,\tau'\in S_{n,(c_1,c_2,\ldots,c_t)}(123)$ with $\tau\neq\tau'$, then $\tau(i)\neq \tau'(i)$ for some $i\in\{t+1,t+2,\ldots,n\}$ and hence $(x,\tau(t+1),\tau(t+2),\ldots,\tau(n))\neq (x,\tau'(t+1),\tau'(t+2),\ldots,\tau'(n))$. So $f$ is injective. $f$ is also surjective because for all $\tau=(x,\tau(2),\tau(3),\ldots,\tau(|A|))\in S_{A,x}(123)$, we have \[\tau'=(c_1,c_2,\ldots,c_t,\tau(2),\tau(3),\ldots,\tau(|A|))\in S_{n,(c_1,c_2,\ldots,c_t)}(123)\] and $f(\tau')=\tau$. Therefore, $f$ is a bijection and hence $|S_{n,(c_1,c_2,\ldots,c_t)}(123)|=|S_{A,x}(123)|$.

    Since $x=\min\{c_1,c_2,\ldots,c_t\}$, we have $[x]\subseteq A$. Now by \cref{Lemma:StandardizationLeadingTerm}, we have
    \[
    |S_{n,(c_1,c_2,\ldots,c_t)}(123)|=|S_{A,x}(123)|=|s(S_{A,x}(123))|=|S_{|A|,x}(123)|=|S_{n-t+1,x}(123)|.\qedhere
    \]
    \end{proof}

The result for the $132$ pattern is similar to the $123$ pattern and hence we state the following result without proof.

\begin{theorem}\label{Theorem:132LeadingTerms}
If $c_i<c_j$ for some $i<j$ and there exists $\alpha$ such that $c_i<\alpha<c_j$ and $\alpha\notin\{c_1,c_2,\ldots,c_t\}$, then $|S_{n,(c_1,c_2,\ldots,c_t)}(132)|=0$. Otherwise, we have
    \[|S_{n,(c_1,c_2,\ldots,c_t)}(132)|=|S_{n-t+1,\min\{c_1,c_2,\ldots,c_t\}}(132)|=b_{n-t+1,\min\{c_1,c_2,\ldots,c_t\}}.
    \]
\end{theorem}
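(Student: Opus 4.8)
The plan is to mirror the proof of \cref{Theorem:123LeadingTerms} as closely as possible, making only the changes dictated by the different geometry of the $132$ pattern. First I would dispose of the degenerate case: if $c_i<c_j$ with $i<j$ and some $\alpha\notin\{c_1,\dots,c_t\}$ satisfies $c_i<\alpha<c_j$, then $c_i c_j \alpha$ (wherever $\alpha$ ends up in the suffix) is a $132$ pattern, so $|S_{n,(c_1,\dots,c_t)}(132)|=0$; this is the exact analogue of the opening paragraph in the proof of \cref{Theorem:123LeadingTerms}, with the inequality $c_i<\alpha<c_j$ replacing $\alpha>c_j$.

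For the main case, write $x=\min\{c_1,\dots,c_t\}$ and $A=([n]\setminus\{c_1,\dots,c_t\})\cup\{x\}$, and define the same map $f:S_{n,(c_1,\dots,c_t)}(132)\to S_{A,x}(132)$ by $f(\tau)=(x,\tau(t+1),\dots,\tau(n))$. The injectivity argument is identical. For surjectivity I would take any $\tau=(x,\tau(2),\dots,\tau(|A|))\in S_{A,x}(132)$ and form $\tau'=(c_1,\dots,c_t,\tau(2),\dots,\tau(|A|))$; here I must check $\tau'\in S_{n,(c_1,\dots,c_t)}(132)$, i.e.\ that prepending $c_1,\dots,c_t$ in place of $x$ creates no new $132$ pattern. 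Any new $132$ pattern $abc$ would have to use at least one $c_\ell$ among its first two entries; since $(c_1,\dots,c_t)$ avoids $132$ by \cref{Convention:AvoidanceForLeadingTerms} and the non-degeneracy hypothesis forces every $c_\ell$ to be a prefix-entry whose value relationship to the suffix elements is controlled, one checks each of the (few) placements of $c_\ell$'s among $\{a,b,c\}$ and derives a contradiction with the no-$\alpha$-strictly-between hypothesis or with $\tau$ avoiding $132$. Once $f$ is shown to be a bijection, $|S_{n,(c_1,\dots,c_t)}(132)|=|S_{A,x}(132)|$, and since $x=\min\{c_i\}$ gives $[x]\subseteq A$, \cref{Lemma:StandardizationLeadingTerm} yields $|S_{A,x}(132)|=|S_{|A|,x}(132)|=|S_{n-t+1,x}(132)|$; finally \cref{Theorem:ClassicalLeading123AND132} (or its newly proved form) gives $|S_{n-t+1,x}(132)|=b_{n-t+1,x}$.

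The one step that genuinely needs care — and the step that differs structurally from the $123$ case — is the surjectivity verification, specifically the claim that replacing $x$ by the full prefix $(c_1,\dots,c_t)$ introduces no $132$ pattern. In the $123$ case the prefix elements are all $\le$ the suffix elements that matter, so any new ascending pair $c_i c_j$ with a larger suffix element is immediately ruled out by the hypothesis $\alpha>c_j\Rightarrow\alpha\in\{c_k\}$; for $132$ one instead has to rule out a prefix pair $c_i<c_j$ acting as the ``$1$'' and ``$3$'' with a suffix ``$2$'' strictly between them in value, which is precisely what the hypothesis $c_i<\alpha<c_j\Rightarrow\alpha\in\{c_k\}$ forbids, and one must also handle the cases where only one prefix element participates. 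I expect this case analysis to be the main obstacle, though it is routine: it is a small finite check, entirely parallel to Cases~1--4 in the proof of \cref{Theorem:LeadingTerms231} and to the case split in \cref{Theorem:123LeadingTerms}. Everything else is a verbatim transcription of the $123$ argument.
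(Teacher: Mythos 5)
Your proposal is correct and follows exactly the route the paper intends: the paper states this theorem without proof, remarking only that the argument is similar to that of \cref{Theorem:123LeadingTerms}, and your transcription of that argument (with the degenerate case keyed to $c_i<\alpha<c_j$ and the surjectivity check reduced to the two-prefix-element case, killed by the hypothesis, and the one-prefix-element case, killed by replacing $c_\ell$ with $x=\min\{c_1,\dots,c_t\}$ to produce a $132$ pattern $xbc$ in $\tau$ itself) is precisely the omitted verification. No gaps.
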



\section{Pairs of patterns of length three}\label{Section:Pair3}
In this section, we enumerate permutations with fixed prefix $(c_1,c_2,\ldots,c_t)$ 
which 
avoid a pair of patterns $\{\sigma_1,\sigma_2\}$ of length three. Recall that we use $S_n(\sigma_1,\sigma_2)$ to denote the set of permutations $\tau\in S_n$ such that $\tau$ avoids both $\sigma_1$ and $\sigma_2$. 
We need the following results by Simion and Schmidt \cite[Section 3]{SimionSchmidt1985}:

\begin{theorem}{\rm\cite[Section 3]{SimionSchmidt1985}}\label{Theorem:Pairs3}
For all $n\geq1$,
    \[
   \begin{split}
|S_n(123,132)|=&|S_n(321,312)|=|S_n(123,213)|=|S_n(321,231)|=|S_n(132,213)|=|S_n(312,231)|\\=&|S_n(132,231)|=|S_n(312,213)|=|S_n(132,312)|=|S_n(213,231)|=2^{n-1},
   \end{split} 
    \]
    \[
    |S_n(123,312)|=|S_n(321,132)|=|S_n(123,231)|=|S_n(321,213)|=\binom{n}{2}+1,
    \]
    and
\[
|S_n(123,321)|=\left\{\begin{array}{ll}
 0    & \text{ if }~~n\geq5, \\
 n    & \text{ if }~~n=1\text{ or }n=2,\\
 4    & \text{ if }~~n=3\text{ or }n=4.
\end{array}\right.
\]
\end{theorem}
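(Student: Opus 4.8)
The final statement to be proven is \cref{Theorem:Pairs3}, the Simion–Schmidt enumeration of permutations avoiding a pair of length-three patterns. Since this is quoted as a known theorem from \cite{SimionSchmidt1985}, the natural move is to either cite it directly or reconstruct the proof in the style of the rest of the paper. I will describe how I would reconstruct it.

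\textbf{Reduction via symmetry.} First I would cut down the number of cases. The dihedral group acting on patterns (reverse, complement, inverse) partitions the $\binom{6}{2}=15$ unordered pairs of length-three patterns into a small number of symmetry classes. Reverse and complement preserve $|S_n(\cdot)|$ outright, and inverse does too; so it suffices to verify one representative from each orbit. The three target values $2^{n-1}$, $\binom{n}{2}+1$, and the degenerate $(123,321)$ case correspond exactly to these orbits (the $2^{n-1}$ class being the large one). So the plan is: (i) list the orbits and pick representatives, e.g. $(123,132)$, $(123,231)$, and $(123,321)$; (ii) handle each representative by a direct structural argument; (iii) note that every equality in the displayed chains follows from one of these three computations together with the symmetry operations.

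\textbf{The three core computations.} For $(123,321)$: a permutation avoiding both a length-3 increasing and a length-3 decreasing subsequence has, by Erdős–Szekeres, length at most $4$; the small cases $n=1,2,3,4$ are checked by hand, giving $1,2,4,4$. For $(123,132)$ (the $2^{n-1}$ class): I would argue that avoiding both $123$ and $132$ forces $\tau(1)\in\{n,n-1\}$ — indeed $\tau(1)$ cannot be exceeded by two later entries in either relative order — and then recurse, or more cleanly, characterize such permutations directly (each is determined by choosing, for each value from $n$ down, whether it is "large-side" or "small-side," giving $2^{n-1}$). For $(123,231)$ (the $\binom{n}{2}+1$ class): avoiding $123$ forces at most two ascending "runs" structure, and avoiding $231$ additionally forces the permutation to be, up to the position of $n$, essentially an identity with one descent block; counting the free choices yields $\binom{n}{2}+1$. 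Each of these can be phrased with the ancestor/descendant and block language already set up in \cref{Section:Preliminaries}.

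\textbf{Main obstacle and alternative.} The main friction is bookkeeping: getting the symmetry orbits exactly right (in particular checking that $(123,231)$ and its relatives genuinely form one orbit distinct from the $2^{n-1}$ class) and making the structural characterization in the $2^{n-1}$ case airtight rather than hand-wavy. Given that the paper treats this purely as input from \cite{SimionSchmidt1985}, the cleanest route for the paper itself is simply to \emph{cite} Simion and Schmidt and omit the proof, reproducing only the statement as was done above — which is what the excerpt does. If a self-contained proof were wanted, I would present the $(123,321)$ Erdős–Szekeres argument in full (it is short), and for the other two orbits give the recursion $t_n = 2t_{n-1}$ (respectively $s_n = s_{n-1} + (n-1)$) obtained by case-splitting on the position or value of $n$, each case being a one-line reduction to a smaller instance of the same avoidance class.
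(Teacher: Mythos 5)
The paper gives no proof of \cref{Theorem:Pairs3}: it is imported verbatim from Simion and Schmidt \cite{SimionSchmidt1985}, exactly as your bottom-line recommendation suggests, so on that score your proposal matches the paper.

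Your supplementary reconstruction sketch, however, contains one concrete error worth flagging. The ten pairs counted by $2^{n-1}$ do \emph{not} form a single orbit under reverse, complement, and inverse; they split into three orbits, namely $\bigl\{\{123,132\},\{321,312\},\{123,213\},\{321,231\}\bigr\}$, $\bigl\{\{132,213\},\{312,231\}\bigr\}$, and $\bigl\{\{132,231\},\{312,213\},\{132,312\},\{213,231\}\bigr\}$. So your step (iii) --- that the three computations for $(123,132)$, $(123,231)$, and $(123,321)$ plus symmetry cover all fifteen pairs --- fails: you would need five representative arguments, three of them inside the $2^{n-1}$ class (this is in fact why Simion and Schmidt's Section 3 treats several separate propositions all yielding $2^{n-1}$). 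The individual structural arguments you sketch for $(123,132)$, $(123,231)$, and $(123,321)$ are sound; the $(123,321)$ Erd\H{o}s--Szekeres argument in particular is the one the paper itself reuses later in \cref{Section:Pair3}.
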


Out of the 15 pairs of patterns of length 3, there are three self-complementary pairs: $\{123,321\}$, $\{132,312\}$, and $\{213,231\}$. That is, $\{123^c,321^c\}=\{123,321\}$, $\{132^c,312^c\}=\{132,312\}$, and $\{213^c,231^c\}=\{213,231\}$. By the Erd\H{o}s-Szekeres theorem \cite[p. 467]{ErdosSzekeres1935}, 
for $n\geq5$, every $\tau\in S_n$ has either an increasing 
or a decreasing subpermutation of length three. 
Hence $|S_{n,(c_1,c_2,\ldots,c_t)}(123,321)|=0$ if $n\geq5$. Since one could routinely calculate $|S_{n,(c_1,c_2,\ldots,c_t)}(123,321)|$ when $n\leq4$, we do not include the exact results for the pair $\{123, 321\}$ here. We start with $\{132,312\}$ and $\{213,231\}$.

\begin{theorem}
If $\{c_1,c_2,\ldots,c_t\}$ is a set of consecutive integers, then 
\[
|S_{n,(c_1,c_2,\ldots,c_t)}(132,312)|=\binom{n-t}{\min\{c_1,c_2,\ldots,c_t\}-1};
\]
otherwise, $|S_{n,(c_1,c_2,\ldots,c_t)}(132,312)|=0$.
\end{theorem}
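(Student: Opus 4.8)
The plan is to prove a structural characterization of $\{132,312\}$-avoiding permutations and then read the count off the fixed prefix. I claim that a permutation $\tau\in S_n$ avoids both $132$ and $312$ if and only if every prefix set $\{\tau(1),\ldots,\tau(k)\}$ is an interval of consecutive integers; equivalently, each entry $\tau(k)$ with $k\geq 2$ is either larger than all of $\tau(1),\ldots,\tau(k-1)$ or smaller than all of them. For the forward direction, first observe that if some $\tau(k)$ is neither a running maximum nor a running minimum, then there are $i,j<k$ with $\tau(i)>\tau(k)>\tau(j)$, and ordering $i,j$ produces a $312$ pattern (when $i<j$) or a $132$ pattern (when $j<i$) ending at position $k$ — the relevant point being that the last entry of either pattern is the median of the three values. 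So each $\tau(k)$ is a running extremum; if nonetheless some prefix set failed to be an interval, take the least such $k$, so that $\{\tau(1),\ldots,\tau(k-1)\}=[a,b]$ while $\tau(k)\geq b+2$ or $\tau(k)\leq a-2$. In the first case the value $b+1$ occurs at some later position $\ell>k$, but by then both $b$ and $\tau(k)\geq b+2$ have already appeared, so $\tau(\ell)=b+1$ is neither a running maximum nor a running minimum, a contradiction; the second case is symmetric. The reverse direction is immediate: if every prefix set is an interval then each $\tau(k)$ extends the current interval at one of its ends, hence is never the median of an earlier pair, so no $132$ or $312$ can end at position $k$.

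\textbf{Counting.} By the characterization, $\tau\in S_{n,(c_1,\ldots,c_t)}$ avoids $\{132,312\}$ exactly when all of its prefix sets are intervals. The first $t$ of these, $\{c_1\}\subseteq\{c_1,c_2\}\subseteq\cdots\subseteq\{c_1,\ldots,c_t\}$, are forced by the prefix. If $\{c_1,\ldots,c_t\}$ is not a set of consecutive integers, the $t$-th prefix set is not an interval, so no such $\tau$ exists and the count is $0$. If $\{c_1,\ldots,c_t\}$ is consecutive, put $p=\min\{c_1,\ldots,c_t\}$, so $\{c_1,\ldots,c_t\}=\{p,p+1,\ldots,p+t-1\}$; by \cref{Convention:AvoidanceForLeadingTerms} the prefix avoids $132$ and $312$, so the characterization applied to the length-$t$ sequence $(c_1,\ldots,c_t)$ — a permutation of the interval $\{p,\ldots,p+t-1\}$ — shows that every one of its sub-prefix sets is already an interval. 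It then remains to choose the suffix $(\tau(t+1),\ldots,\tau(n))$, which by the characterization is determined by a sequence of $n-t$ moves, each extending the current interval either downward by one (placing the next smallest unused value) or upward by one (placing the next largest), starting from $[p,p+t-1]$ and ending at $[1,n]$; conversely every such sequence yields a valid $\tau$. Reaching $[1,n]$ requires exactly $p-1$ downward moves and $n-(p+t-1)$ upward moves, a total of $n-t$ moves, and the number of interleavings is $\binom{n-t}{p-1}=\binom{n-t}{\min\{c_1,\ldots,c_t\}-1}$, as claimed.

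\textbf{Main obstacle and sanity checks.} The only step that is not pure bookkeeping is the ``missing value'' argument in the forward direction of the characterization — ruling out gaps in the prefix sets once one knows that every entry is a running extremum — and I expect that to be the crux. I also plan to verify the degenerate situations allowed by \cref{Convention:AvoidanceForLeadingTerms}: when $t=2$ the prefix avoids $132$ and $312$ vacuously, so ``$\{c_1,c_2\}$ consecutive'' is a genuine hypothesis (for instance $(1,3)$ is an admissible prefix yet gives count $0$), while for $t=1$ the hypothesis is automatic. As a consistency check against \cref{Theorem:Pairs3}, summing the $t=1$ case over the leading term recovers $\sum_{r=1}^{n}\binom{n-1}{r-1}=2^{n-1}=|S_n(132,312)|$.
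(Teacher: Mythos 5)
Your proof is correct and amounts to the same argument as the paper's: the paper observes that in any $\tau\in S_{n,(c_1,\ldots,c_t)}(132,312)$ the values above the prefix must appear in increasing order and the values below it in decreasing order, so the suffix is a shuffle of these two monotone runs, counted by $\binom{n-t}{\min\{c_1,\ldots,c_t\}-1}$ — which is exactly your up/down-move count, with your prefix-interval (running-extremum) characterization being an equivalent repackaging of that structure. The nonconsecutive case is likewise handled the same way in both arguments, by exhibiting a $132$ or $312$ pattern using a missing intermediate value.
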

\begin{proof}
Write $x=\min\{c_1,c_2,\ldots,c_t\}$ and $A=[n]\backslash\{c_1,c_2,\ldots,c_t\}$.

Suppose that $\{c_1,c_2,\ldots,c_t\}$ is a set of consecutive integers. Then we have \[\{c_1,c_2,\ldots,c_t\}=\{x,x+1,\ldots,x+t-1\}.\] Let $\tau\in S_{n,(c_1,c_2,\ldots,c_t)}(132,312)$. Since $\tau$ avoids $132$, the subpermutation on $\{x+t,x+t+1,\ldots,n\}$, if $x+t-1\neq n$, is $(x+t,x+t+1,\ldots,n)$; and since $\tau$ avoids $312$, the subpermutation on $\{1,2,\ldots,x-1\}$, if $x\neq 1$, is $(x-1,x-2,\ldots,1)$. The number of shuffles of $(x+t,x+t+1,\ldots,n)$ and $(x-1,x-2,\ldots,1)$ is $\binom{n-t}{x-1}$. Hence $|S_{n,(c_1,c_2,\ldots,c_t)}(132,312)|\leq \binom{n-t}{x-1}$. Now let $\tau\in S_{n,(c_1,c_2,\ldots,c_t)}$ such that $(\tau(t+1),\tau(t+2),\ldots,\tau(n))$ is a shuffle of $(x+t,x+t+1,\ldots,n)$ and $(x-1,x-2,\ldots,1)$. It is easy to check that $\tau\in S_{n,(c_1,c_2,\ldots,c_t)}(132,312)$. Hence we have $|S_{n,(c_1,c_2,\ldots,c_t)}(132,312)|=\binom{n-t}{x-1}$.

Now suppose that $\{c_1,c_2,\ldots,c_t\}$ is not a set of consecutive integers. Then there exists $y\in A$ and $i,j\in[t]$ such that $i<j$, and $c_i<y<c_j$ or $c_j<y<c_i$. Then $c_ic_jy$ is either a $132$ pattern or a $312$ pattern. Hence, we have $|S_{n,(c_1,c_2,\ldots,c_t)}(132,312)|=0$.
\end{proof}

\begin{theorem}\label{Theorem:LeadingTerms213;231}
If $(c_1,c_2,\ldots,c_t)$ is a shuffle of $(1,2,\ldots,t-s)$ and $(n,n-1,\ldots,n-s+1)$ for some $s\in\{0,1,\ldots,t\}$, then
\[|S_{n,(c_1,c_2,\ldots,c_t)}(213,231)|=2^{n-t-1};\]
otherwise, $|S_{n,(c_1,c_2,\ldots,c_t)}(213,231)|=0$.
\end{theorem}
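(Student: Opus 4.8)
### Proof proposal

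\textbf{Setup and the zero case.} The plan is to characterize exactly which prefixes $(c_1,c_2,\ldots,c_t)$ can occur, and then for the admissible ones to exhibit a block decomposition of the suffix and count. First I would dispose of the zero case. If $(c_1,c_2,\ldots,c_t)$ is \emph{not} a shuffle of an increasing run $(1,2,\ldots,t-s)$ and a decreasing run $(n,n-1,\ldots,n-s+1)$, then I claim the prefix contains an entry $c_k\notin\{1,2,\ldots,t-s\}\cup\{n-s+1,\ldots,n\}$ for every valid $s$, equivalently some $c_k$ lies strictly between $t$ (the number of prefix entries) considerations $\ldots$ more precisely: a cleaner way is to argue directly. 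Since $(c_1,\ldots,c_t)$ itself avoids $213$ and $231$ (by \cref{Convention:AvoidanceForLeadingTerms}), it is already a shuffle of an increasing and a decreasing sequence (this is the $t$-element case of the structure we are proving, and follows from \cref{Theorem:Pairs3}'s structural description). So the prefix is a shuffle of $(a_1<a_2<\cdots<a_p)$ and $(b_1>b_2>\cdots>b_q)$ with $p+q=t$. If $|S_{n,(c_1,\ldots,c_t)}(213,231)|\neq 0$, pick any $\tau$ in this set and any value $y\notin\{c_1,\ldots,c_t\}$ sitting in the suffix. For each prefix entry $c_i$: if $c_i$ is an "ascending" entry ($c_i=a_u$) with a later prefix entry $c_j>c_i$, then $y<c_i$ would make $c_i c_j y$ a $231$ pattern, so every suffix value is $>$ every ascending prefix entry that is not the last ascending one; and similarly descending entries force suffix values to be small. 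Iterating this forces $\{a_1,\ldots,a_p\}=\{1,\ldots,p\}$ and $\{b_1,\ldots,b_q\}=\{n-q+1,\ldots,n\}$, i.e. the prefix is a shuffle of $(1,\ldots,t-s)$ and $(n,\ldots,n-s+1)$ with $s=q$. This proves the zero statement.

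\textbf{The block structure in the admissible case.} Now assume $(c_1,\ldots,c_t)$ is a shuffle of $(1,2,\ldots,t-s)$ and $(n,n-1,\ldots,n-s+1)$, so $\{c_1,\ldots,c_t\}=\{1,\ldots,t-s\}\cup\{n-s+1,\ldots,n\}$ and the remaining values are the "middle block" $M=\{t-s+1,\,t-s+2,\,\ldots,\,n-s\}$, which has $|M|=n-t$. Let $\tau\in S_{n,(c_1,\ldots,c_t)}(213,231)$. I would show:
(a) every suffix value (they are exactly the elements of $M$) occurs after all prefix positions, trivially true;
(b) within the suffix, $\tau$ restricted to $M$ must avoid $213$ and $231$, hence by the structural description underlying \cref{Theorem:Pairs3} it is a shuffle of an increasing and a decreasing sequence on $M$ — but more is forced. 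Here is the key local argument: take the smallest value in $M$, call it $\mu=t-s+1$; since any ascending prefix entry $c_i\le t-s<\mu$ (if $p=t-s\ge 1$) followed later in the prefix by the largest prefix value $n$ would pair with $\mu$ to give... actually the cleanest route is: for any two suffix values $x<x'$ with $x'$ appearing before $x$, and using a prefix entry as the "2" — if $p\ge 1$, the ascending prefix entries are $1,\ldots,t-s$, all smaller than everything in $M$, so $1,x',x$ is a $231$... wait $1<x$, so $1\,x'\,x$ has pattern with $1$ smallest: that's $1\to$ small, $x'\to$ large, $x\to$ medium, i.e. pattern $132$, not forbidden. Let me instead use a descending prefix entry as the "2": if $q\ge 1$, the descending entries are all $>$ everything in $M$; take $c_j=n-s+1$ (the smallest descending prefix entry, which appears somewhere in the prefix). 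For suffix values $x<x'$: the triple $x,\,(n-s+1),\,?$ doesn't work since the prefix entry comes first. So prefix entries can only serve as the "1" or "2" of a length-3 pattern when... they come first, so they're the "2" or "3". Hmm.

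\textbf{Correct reduction and the count.} The right move, and the one I would actually carry out, is to reduce to the no-prefix structural theorem by a standardization/deletion argument analogous to \cref{Lemma:StandardizationLeadingTerm}. Concretely: given $\tau\in S_{n,(c_1,\ldots,c_t)}(213,231)$, I claim the suffix $(\tau(t+1),\ldots,\tau(n))$, standardized to a permutation of $[n-t]$, is an arbitrary element of $S_{n-t}(213,231)$, \emph{and} conversely any such choice extends uniquely. One direction (forward): the suffix is a subpermutation of $\tau$ so it avoids $213$ and $231$; standardization preserves this by \cref{Lemma:StandardizationPreservesPatternAvoidance}. For the converse I must check that prepending the fixed prefix and matching the suffix to the middle block $M$ never creates a forbidden pattern. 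A potential $213$ or $231$ pattern using at least one prefix entry: if it uses exactly one prefix entry $c_i$, that entry is the first of the three (smallest index), so in a $213$ pattern it would be the "2" (middle value) and in a $231$ pattern the "2" (largest) — for $231$ we'd need $c_i$ larger than both later entries, but if $c_i$ is an ascending prefix entry it is $\le t-s$ hence smaller than every element of $M$, and if it's a descending entry $c_i\ge n-s+1$, then the two later suffix entries $x'>x$ from $M$ give $c_i\,x'\,x$ with $c_i$ largest: pattern $321$, not $231$ — wait, $x'>x$ and both $<c_i$ gives relative order (large, medium, small) $=321$. Not forbidden. For $213$: need $c_i$ medium, i.e. one later entry smaller and one larger; an ascending $c_i\le t-s$ has everything in $M$ above it, impossible; a descending $c_i\ge n-s+1$ has everything in $M$ below it, impossible. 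If the pattern uses two prefix entries $c_i,c_j$ ($i<j$) and one suffix entry $y\in M$: $c_ic_j$ is either ascending (both $\le t-s$, so $y>c_i,c_j$, pattern $c_i<c_j<y$ $=123$) or descending (both $\ge n-s+1$, so $y<c_i,c_j$, giving $c_i\,c_j\,y$; if $c_i>c_j$ this is $321$, if $c_i<c_j$ with both descending-type that can't happen since the descending subsequence is strictly decreasing in position order) — wait, a shuffle: two descending-type entries appear in decreasing order, so $c_i>c_j>y$, pattern $321$. None forbidden. If it uses three prefix entries, it's a pattern in the prefix itself, excluded by convention. So the extension is always valid, the correspondence is a bijection, and
\[
|S_{n,(c_1,c_2,\ldots,c_t)}(213,231)| = |S_{n-t}(213,231)| = 2^{\,n-t-1},
\]
the last equality by \cref{Theorem:Pairs3} (valid since $n-t\ge 1$ as $t<n$; the edge case $n-t=0$ does not arise, and $n-t=1$ gives $2^0=1$ correctly). \textbf{The main obstacle} I anticipate is the case analysis in the converse direction — verifying exhaustively that no forbidden pattern can be spread across the prefix/suffix boundary — and getting the zero-case characterization airtight, i.e. showing that admissibility of the prefix is not merely necessary for $213,231$-avoidance of the prefix but is exactly the condition under which a valid suffix completion exists; the shuffle-of-monotone structural fact for $\{213,231\}$-avoiders (implicit in \cref{Theorem:Pairs3}) is the lever that makes both halves work.
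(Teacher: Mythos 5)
Your reduction in the admissible case is essentially the paper's proof: the paper defines the same map $\tau\mapsto s(\tau(t+1),\ldots,\tau(n))$ onto $S_{n-t}(213,231)$ and verifies surjectivity by checking that no forbidden pattern can begin at a prefix entry, using exactly your dichotomy (ascending prefix entries lie below every suffix value, descending ones above). Two small blemishes there: in a $231$ pattern the leading element is the \emph{middle} value, not the largest (your parenthetical mislabels it, though the argument you actually run is the right one), and your two-prefix-entry case omits the mixed pair --- one ascending and one descending entry --- which yields a $132$ or $312$ pattern and is therefore harmless, but it is a case and should be stated.

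The genuine gap is in the zero case. Your mechanism for forcing the prefix values to be $\{1,\ldots,t-s\}\cup\{n-s+1,\ldots,n\}$ uses two prefix entries and one suffix value ($c_ic_jy$ with $c_j$ a later, larger prefix entry), so by your own admission it constrains only the ascending entries that are not last (and dually for descending), and ``iterating'' does not repair this. Concretely, for $n=5$ and prefix $(1,3)$ your argument imposes no constraint coming from the entry $3$, since it has no later larger prefix entry; yet $|S_{5,(1,3)}(213,231)|=0$, because $3$ is followed by both $2$ and $4$, so one of $342$ (a $231$) or $324$ (a $213$) must occur. The correct lever --- which you do deploy in the converse direction but not here --- is one prefix entry against two suffix values: if some $x\in\{c_1,\ldots,c_t\}$ has suffix values $y>x$ and $z<x$, then $xyz$ or $xzy$ is a $213$ or a $231$. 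Hence every prefix entry lies entirely below or entirely above $[n]\setminus\{c_1,\ldots,c_t\}$, which pins down the value set, and the order within each half is then forced by your $c_ic_jy$ argument. This is exactly how the paper's Cases 3 and 4 (and Subcases 4.1, 4.2) proceed.
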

Note that, in \cref{Theorem:LeadingTerms213;231}, when $s=0$, we mean that $(c_1,c_2,\ldots,c_t)=(1,2,\ldots,t)$; and when $s=t$, we mean that $(c_1,c_2,\ldots,c_t)=(n,n-1,\ldots,n-t+1)$.
\begin{proof}
Write $A=[n]\backslash\{c_1,c_2,\ldots,c_t\}$.

Suppose $(c_1,c_2,\ldots,c_t)$ is a shuffle of $(1,2,\ldots,t-s)$ and $(n,n-1,\ldots,n-s+1)$ for some $s\in\{0,1,\ldots,t\}$. Consider the map $f:S_{n,(c_1,c_2,\ldots,c_t)}(213,231)\to S_{n-t}(213,231)$ such that for all $\tau\in S_{n,(c_1,c_2,\ldots,c_t)}(213,231)$,
\[
f(\tau)=s(\tau(t+1),\tau(t+2),\ldots,\tau(n))
\]
where $s(x)$ is the standardization of $x$. We will show that $f$ is a bijection. It is easy to see that $f$ is a well-defined function. Similarly to the logic in the proof of \cref{Theorem:123LeadingTerms}, one can also see that $f$ is injective. It remains to show that $f$ is surjective.

Let $\tau\in S_{n-t}(213,231)$ and let $\tau'$ be the matching permutation of $\tau$ on $A$. Then $\pi:=(c_1,c_2,\ldots,c_t,\tau'(1),\tau'(2),\ldots,\tau'(n-t))\in S_{n,(c_1,c_2,\ldots,c_t)}$. We need to show that $\pi$ avoids $213$ and $231$. Let $xyz$ be a subpermutation of $\pi$. If $xyz$ is a subpermutation of $(c_1,c_2,\ldots,c_t)$ or $\tau'$, then $xyz$ is neither a $213$ pattern nor a $231$ pattern. So we assume that $x\in\{c_1,c_2,\ldots,c_t\}$ and $z\in A$. We split into two cases:

Case 1: $x>z$. Then $xyz$ is not a $213$ pattern. Since $(c_1,c_2,\ldots,c_t)$ is a shuffle of $(1,2,\ldots,t-s)$ and $(n,n-1,\ldots,n-s+1)$, we have $x\geq n-s+1$. Since $y\in\mathcal{D}_\pi(x)$, we have $y<x$ and hence $xyz$ is not a $231$ pattern.

Case 2: $x<z$. Then $xyz$ is not a $231$ pattern. Since $(c_1,c_2,\ldots,c_t)$ is a shuffle of $(1,2,\ldots,t-s)$ and $(n,n-1,\ldots,n-s+1)$, we have $x\leq t-s$. Since $y\in\mathcal{D}_\pi(x)$, we have $y>x$ and hence $xyz$ is not a $213$ pattern.

Hence $\pi\in S_{n,(c_1,c_2,\ldots,c_t)}(213,231)$ and $f(\pi)=\tau$ by our construction. This shows that $f$ is surjective. Now $f$ is a bijection and hence, by \cref{Theorem:Pairs3},
\[
|S_{n,(c_1,c_2,\ldots,c_t)}(213,231)|=|S_{n-t}(213,231)|=2^{n-t-1}.
\]

Now suppose $(c_1,c_2,\ldots,c_t)$ is not a shuffle of $(1,2,\ldots,t-s)$ and $(n,n-1,\ldots,n-s+1)$ for any $s\in\{0,1,\ldots,t\}$. There are two scenarios where this could happen. The first one is when $\{c_1,c_2,\ldots,c_t\}\neq \{1,2,\ldots,t-s,n-s+1,n-s+2,\ldots,n\}$, and the second one is when $\{c_1,c_2,\ldots,c_t\}=\{1,2,\ldots,t-s,n-s+1,n-s+2,\ldots,n\}$ but for any $s\in\{0,1,\ldots,t\}$, either the subpermutation of $(c_1,c_2,\ldots,c_t)$ on $\{1,2,\ldots,t-s\}$ is not $(1,2,\ldots,t-s)$ or the subpermutation of $(c_1,c_2,\ldots,c_t)$ on $\{n-s+1,n-s+2,\ldots,n\}$ is not $(n,n-1,\ldots,n-s+1)$. We split into two cases based on these scenarios. Let $\tau\in S_{n,(c_1,c_2,\ldots,c_t)}$.

Case 3: $\{c_1,c_2,\ldots,c_t\}\neq \{1,2,\ldots,t-s,n-s+1,n-s+2,\ldots,n\}$ for any $s\in\{0,1,\ldots,t\}$. Then there exist $x\in\{c_1,c_2,\ldots,c_t\}$ and $y,z\in A$ such that $y>x$ and $z<x$. In this case, either $xyz$ or $xzy$ is a subpermutation of $\tau$ and hence $\tau$ contains either a $213$ pattern or a $231$ pattern. Hence, $|S_{n,(c_1,c_2,\ldots,c_t)}(213,231)|=0$. 

Case 4: $\{c_1,c_2,\ldots,c_t\}=\{1,2,\ldots,t-s,n-s+1,n-s+2,\ldots,n\}$ for some $s\in\{0,1,\ldots,t\}$. 

Subcase 4.1: The subpermutation of $(c_1,c_2,\ldots,c_t)$ on $\{1,2,\ldots,t-s\}$ is not $(1,2,\ldots,t-s)$. Then there exist $x,y\in\{c_1,c_2,\ldots,c_t\}$ and $z\in A$ such that $x<y<z$ and $y\in\mathcal{A}_\tau(x)$. Now $yxz$ is a $213$ pattern and hence $|S_{n,(c_1,c_2,\ldots,c_t)}(213,231)|=0$.

Subcase 4.2: The subpermutation of $(c_1,c_2,\ldots,c_t)$ on $\{n-s+1,n-s+2,\ldots,n\}$ is not $(n,n-1,\ldots,n-s+1)$. Then there exist $x,y\in\{c_1,c_2,\ldots,c_t\}$ and $z\in A$ such that $z<y<x$ and $y\in\mathcal{A}_\tau (x)$. Now $yxz$ is a $231$ pattern and hence $|S_{n,(c_1,c_2,\ldots,c_t)}(213,231)|=0$.
\end{proof}

We have 12 pairs left to consider. By \cref{lemma:complements}, it suffices to look at $\{123,132\}$, $\{123,213\}$, $\{132,213\}$, $\{132,231\}$, $\{123,312\}$, and $\{123,231\}$.

\begin{theorem}\label{Theorem:LeadingTerms123;132}
Write $\alpha=\max([n]\backslash\{c_1,c_2,\ldots,c_t\})$. If $\{c_1,c_2,\ldots,c_{n-\alpha}\}\neq \{\alpha+1,\alpha+2,\ldots,n\}$ or $(c_{n-\alpha+1},c_{n-\alpha+2},\ldots,c_t)\neq (\alpha-1,\alpha-2,\ldots,n-t)$, then $|S_{n,(c_1,c_2,\ldots,c_t)}(123,132)|=0$; otherwise,
\[|S_{n,(c_1,c_2,\ldots,c_t)}(123,132)|=2^{n-t-1}.\]
\end{theorem}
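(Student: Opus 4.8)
The plan is to follow the template used for \cref{Theorem:123LeadingTerms} and \cref{Theorem:LeadingTerms213;231}: first determine exactly when $|S_{n,(c_1,c_2,\ldots,c_t)}(123,132)|$ is nonzero, and then, in that case, build a standardization bijection onto $S_{n-t}(123,132)$, whose cardinality $2^{n-t-1}$ is supplied by \cref{Theorem:Pairs3}. Throughout I write $B=[n]\setminus\{c_1,c_2,\ldots,c_t\}$ for the set of values filling the suffix $(\tau(t+1),\ldots,\tau(n))$, and I put $\alpha=\max B$; then $\{\alpha+1,\alpha+2,\ldots,n\}\subseteq\{c_1,c_2,\ldots,c_t\}$ automatically, and $\alpha$ occupies some position $>t$ in every $\tau\in S_{n,(c_1,c_2,\ldots,c_t)}$.

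For the vanishing case I argue the contrapositive: assume some $\tau\in S_{n,(c_1,c_2,\ldots,c_t)}(123,132)$ exists and deduce that the prefix has the claimed shape. Call a prefix entry \emph{high} if it exceeds $\alpha$ and \emph{mid} if it is below $\alpha$ (no prefix entry equals $\alpha$). If a mid entry $c_i$ preceded a high entry $c_j$, then $c_i\,c_j\,\alpha$ would be a $132$ pattern of $\tau$; likewise, if a mid entry preceded a larger mid entry, or if some $b\in B\setminus\{\alpha\}$ with $b>c_i$ occurred for a mid entry $c_i$, then $c_i$, the larger of the two values involved, and $\alpha$ form a $123$ or a $132$ pattern. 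Hence all high entries precede all mid entries, forcing $\{c_1,\ldots,c_{n-\alpha}\}=\{\alpha+1,\ldots,n\}$; moreover the mid entries are strictly decreasing and each exceeds every element of $B\setminus\{\alpha\}$. Since $\{\text{mid entries}\}$ and $B\setminus\{\alpha\}$ partition $[\alpha-1]$, the mid entries must be its top $t-(n-\alpha)$ values $\{n-t,n-t+1,\ldots,\alpha-1\}$ written in decreasing order, i.e.\ $(c_{n-\alpha+1},\ldots,c_t)=(\alpha-1,\alpha-2,\ldots,n-t)$. So if the prefix fails this shape the count is $0$.

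For the count, assume the prefix has this shape; then $B=\{1,2,\ldots,n-t-1\}\cup\{\alpha\}$, and by \cref{Convention:AvoidanceForLeadingTerms} its initial segment $(c_1,\ldots,c_{n-\alpha})$ avoids $123$ and $132$. I would define $f\colon S_{n,(c_1,c_2,\ldots,c_t)}(123,132)\to S_{n-t}(123,132)$ by $f(\tau)=s(\tau(t+1),\ldots,\tau(n))$; it is well defined by \cref{Lemma:StandardizationPreservesPatternAvoidance} and injective exactly as in \cref{Theorem:123LeadingTerms}. For surjectivity, given $\pi\in S_{n-t}(123,132)$ I take its matching permutation $\sigma$ on $B$ and set $\tau=(c_1,\ldots,c_t,\sigma(1),\ldots,\sigma(n-t))$; the real work is checking that $\tau$ avoids $123$ and $132$. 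Any would-be $123$ or $132$ pattern $abc$ whose first entry $a$ lies in the suffix is a pattern of $\sigma$, which avoids both; if $a$ is a high entry then $b,c>\alpha$ forces $abc$ to lie inside $(c_1,\ldots,c_{n-\alpha})$, which avoids both; and if $a$ is a mid entry then $\alpha$ is the only value after $a$'s position that exceeds $a$, so $a$ cannot be the minimum of such a pattern. Hence $f$ is a bijection and $|S_{n,(c_1,c_2,\ldots,c_t)}(123,132)|=|S_{n-t}(123,132)|=2^{n-t-1}$.

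I expect the surjectivity step — the three-way case check that the reconstructed $\tau$ avoids both patterns — to be the main obstacle, closely followed by the bookkeeping in the vanishing case that converts ``each mid entry exceeds all of $B\setminus\{\alpha\}$'' into the exact identity $(c_{n-\alpha+1},\ldots,c_t)=(\alpha-1,\ldots,n-t)$; one must also keep an eye on the degenerate case $\alpha=n-t$, where there are no mid entries and $B=[n-t]$.
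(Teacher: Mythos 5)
Your proposal is correct and follows essentially the same route as the paper: the same pattern-witnesses ($c_ic_j\alpha$ as a $132$ or $123$, and $c_iy\alpha$ or $c_i\alpha y$ for a stray $y\in B\setminus\{\alpha\}$) establish the vanishing case (you phrase it as a contrapositive, the paper as a direct case split), and the count is obtained by the same standardization bijection onto $S_{n-t}(123,132)$ that the paper invokes only by reference to the proof of \cref{Theorem:LeadingTerms213;231}. Your surjectivity check --- in particular the observation that a mid entry has only $\alpha$ to its right exceeding it, so it can never be the minimum of a $123$ or $132$ occurrence --- is sound and in fact more detailed than what the paper records.
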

Note that, in \cref{Theorem:LeadingTerms123;132}, we must have $\alpha\geq n-t$, and if $\alpha=n-t$, then $\{c_1,c_2,\ldots,c_t\}=\{n-t+1,n-t+2,\ldots,n\}$.
\begin{proof}
    Write $A=[n]\backslash\{c_1,c_2,\ldots,c_t\}$.

    First suppose that $\{c_1,c_2,\ldots,c_{n-\alpha}\}\neq \{\alpha+1,\alpha+2,\ldots,n\}$. As mentioned earlier, if $\alpha=n-t$, then $\{c_1,c_2,\ldots,c_{n-\alpha}\}=\{c_1,c_2,\ldots,c_t\}=\{n-t+1,n-t+2,\ldots,n\}$. So we must have $\alpha>n-t$. Since $\alpha=\max A$, there exist $i\in\{1,2,\ldots,n-\alpha\}$ and $j\in\{n-\alpha+1,n-\alpha+2,\ldots,t\}$ such that $c_i<\alpha<c_j$. So $c_ic_j\alpha$ is a $132$ pattern and hence $|S_{n,(c_1,c_2,\ldots,c_t)}(123,132)|=0$.

    Next suppose that $\{c_1,c_2,\ldots,c_{n-\alpha}\}=\{\alpha+1,\alpha+2,\ldots,n\}$ but $(c_{n-\alpha+1},c_{n-\alpha+2},\ldots,c_t)\neq (\alpha-1,\alpha-2,\ldots,n-t)$. Notice that this could only happen when $\alpha>n-t$ because otherwise $n-\alpha+1=n-(n-t)+1=t+1$. We split into three cases:

    Case 1: $\{c_{n-\alpha+1},c_{n-\alpha+2},\ldots,c_t\}\neq\{\alpha-1,\alpha-2,\ldots,n-t\}$ and there exists $i\in\{n-\alpha+1,n-\alpha+2,\ldots,t\}$ such that $c_i>\alpha$. Then $\{c_1,c_2,\ldots,c_{n-\alpha}\}\neq \{\alpha+1,\alpha+2,\ldots,n\}$. This is a contradiction.

    Case 2: $\{c_{n-\alpha+1},c_{n-\alpha+2},\ldots,c_t\}\neq\{\alpha-1,\alpha-2,\ldots,n-t\}$ and $c_i<\alpha$ for all $i\in\{n-\alpha+1,n-\alpha+2,\ldots,t\}$. Let $\tau\in S_{n,(c_1,c_2,\ldots,c_t)}$. Then there exist $y\in A$ and $c_i\in \{c_{n-\alpha+1},c_{n-\alpha+2},\ldots,c_t\}$ such that $c_i<y<\alpha$. So either $c_iy\alpha$ or $c_i\alpha y$ is a subpermutation of $\tau$. It follows that $\tau$ has either a $123$ pattern or a $132$ pattern. Hence, $|S_{n,(c_1,c_2,\ldots,c_t)}(123,132)|=0$.

    Case 3: $\{c_{n-\alpha+1},c_{n-\alpha+2},\ldots,c_t\}=\{\alpha-1,\alpha-2,\ldots,n-t\}$ but $(c_{n-\alpha+1},c_{n-\alpha+2},\ldots,c_t)\neq (\alpha-1,\alpha-2,\ldots,x)$. Then there exist $i,j\in\{n-\alpha+1,n-\alpha+2,\ldots,t\}$ with $i<j$ and $c_i<c_j$. Now $c_ic_j\alpha$ is a $123$ pattern. Hence, $|S_{n,(c_1,c_2,\ldots,c_t)}(123,132)|=0$.



    The proof that $|S_{n,(c_1,c_2,\ldots,c_t)}(123,132)|=2^{n-t-1}$ when $\{c_1,c_2,\ldots,c_{n-\alpha}\}=\{\alpha+1,\alpha+2,\ldots,n\}$ and $(c_{n-\alpha+1},c_{n-\alpha+2},\ldots,c_t)=(\alpha-1,\alpha-2,\ldots,n-t-1)$ is similar to the proof of \cref{Theorem:LeadingTerms213;231}.
\end{proof}

\begin{theorem}\label{Theorem:LeadingTerms123AND213}
If there exists $\alpha\in[n]\backslash\{c_1,c_2,\ldots,c_t\}$ and $1\leq i<j\leq t$ with $c_i,c_j<\alpha$, then $|S_{n,(c_1,c_2,\ldots,c_t)}(123,213)|=0$; otherwise,
\[
|S_{n,(c_1,c_2,\ldots,c_t)}(123,213)|=2^{\max\{0,\min\{c_1,c_2,\ldots,c_t\}-2\}}.
\]
\end{theorem}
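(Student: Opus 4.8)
The plan is to mimic the structure of the proofs of \cref{Theorem:123LeadingTerms} and \cref{Theorem:LeadingTerms213;231}: first dispose of the degenerate case where the count is zero, then set up a bijection that reduces the remaining count to a pattern-avoidance count on a shorter permutation whose value is already known from \cref{Theorem:Pairs3}. For the zero case, suppose there is an $\alpha\in[n]\setminus\{c_1,\ldots,c_t\}$ and indices $i<j$ in $[t]$ with $c_i,c_j<\alpha$. Then in any $\tau\in S_{n,(c_1,\ldots,c_t)}$, the entry $\alpha$ appears after position $t$, and the triple $c_i c_j \alpha$ is a subpermutation of $\tau$. Since $c_i$ and $c_j$ are both less than $\alpha$, this triple is either a $123$ pattern (if $c_i<c_j$) or a $213$ pattern (if $c_j<c_i$); either way $|S_{n,(c_1,\ldots,c_t)}(123,213)|=0$.

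For the main case, write $x=\min\{c_1,\ldots,c_t\}$. The negation of the zero condition says: for every $\alpha\notin\{c_1,\ldots,c_t\}$, at most one of the prefix entries is smaller than $\alpha$ — equivalently, among the entries of the prefix, all but possibly the smallest one, $x$, must lie above every ``gap value.'' I expect this forces $\{c_1,\ldots,c_t\}\setminus\{x\}=\{n-t+2,n-t+3,\ldots,n\}$ (the top $t-1$ values), with $x\le n-t+1$ arbitrary, and moreover the prefix, being $123$- and $213$-avoiding, must itself be arranged compatibly. One should first pin down exactly which prefixes $(c_1,\ldots,c_t)$ are admissible (I anticipate: $x$ occupies some position, the remaining $t-1$ positions are filled by $n-t+2,\ldots,n$ in decreasing order, since any ascent among those large values together with a suffix element below them would create a $123$ or $213$). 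Once the admissible prefixes are characterized, define $A=[n]\setminus\{c_1,\ldots,c_t\}$ and the map $f(\tau)=s(\tau(t+1),\ldots,\tau(n))\in S_{n-t}$, exactly as in \cref{Theorem:LeadingTerms213;231}. Injectivity is immediate; for surjectivity, take $\rho\in S_{n-t}(123,213)$, let $\rho'$ be its matching permutation on $A$, and form $\pi=(c_1,\ldots,c_t,\rho'(1),\ldots,\rho'(n-t))$; one checks $\pi$ avoids $123$ and $213$ by a short case analysis on triples $xyz$ with $x$ in the prefix and $z$ in the suffix (using that the large prefix values are descending and $x$ is the global prefix minimum). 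Then \cref{Theorem:Pairs3} gives $|S_{n-t}(123,213)|=2^{n-t-1}$, and the subtlety is that the claimed answer is $2^{\max\{0,\,x-2\}}$, not $2^{n-t-1}$ — so the naive bijection to all of $S_{n-t}(123,213)$ cannot be the whole story.

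The hard part, and the point where I expect to deviate from the template, is that the suffix is \emph{not} free to be any $123,213$-avoiding permutation of $A$: the large values $n-t+2,\ldots,n$ sitting in the prefix constrain which suffix orderings avoid the patterns. Concretely, the elements of $A$ larger than $x$ (namely $x+1,x+2,\ldots,n-t+1$, together with $x$ itself reinserted) must appear in the suffix in a way that, combined with the descending block of large prefix values, avoids $123$ and $213$; I expect this forces all of $A\setminus([x]\cap A)$ — i.e. everything in the suffix exceeding $x$ — to be in strictly decreasing order, leaving only the values $\{1,2,\ldots,x-1\}$ (which lie in $A$ and are below $x$) to be shuffled in freely, and even their placement relative to each other and to the descending tail is what carries the $2$-power. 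A clean way to run this: show that $\tau\in S_{n,(c_1,\ldots,c_t)}(123,213)$ is determined by choosing, for each of $1,2,\ldots,x-1$, whether it sits to the left or to the right of a certain reference position, subject to avoiding the two patterns, yielding $2^{x-2}$ choices when $x\ge2$ and a single permutation when $x\le1$ — matching $2^{\max\{0,x-2\}}$. So rather than one global bijection to $S_{n-t}(123,213)$, I would prove directly that the admissible $\tau$ are in bijection with subsets recording the relative order of $1,\ldots,x-1$, and I would verify the pattern-avoidance conditions translate exactly into ``no constraint beyond a single binary choice per small value after the first,'' which is the step most likely to require careful case-checking.
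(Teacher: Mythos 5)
Your zero case is correct and identical to the paper's. The main case, however, is left as a collection of anticipated claims, and the one decisive structural fact is missing while one of your guesses is actually false. The paper's argument (for $x:=\min\{c_1,\ldots,c_t\}\geq 2$) is: since $x$ sits in the prefix and $\tau$ avoids $123$, the elements of $A\cap\{x+1,\ldots,n\}$ must appear in \emph{decreasing} order in the suffix (any ascent among them completes a $123$ with $x$); and since $\tau$ avoids $213$, every $y\in\{1,\ldots,x-1\}$ must appear \emph{after} every $z\in A\cap\{x+1,\ldots,n\}$, because otherwise $xyz$ is a $213$ pattern. So the suffix is a completely forced decreasing block followed by an arbitrary member of $S_{x-1}(123,213)$, and \cref{Theorem:Pairs3} gives $|S_{x-1}(123,213)|=2^{x-2}$ (with the easy converse check that every such $\tau$ works, and the separate trivial case $x=1$). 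Your proposal never states the second forcing; instead you write that the values $1,\ldots,x-1$ are ``shuffled in freely'' and that ``their placement relative to \ldots\ the descending tail is what carries the $2$-power.'' That is wrong on both counts: their placement relative to the tail is completely determined, and the power of two comes entirely from the internal order of $1,\ldots,x-1$ as a $\{123,213\}$-avoiding permutation. The substitute you offer --- ``a single binary choice per small value after the first'' relative to an unspecified reference position --- is not established and is flagged by you as the step needing verification, so the count $2^{x-2}$ is not actually derived.

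Two smaller points. First, your detour into characterizing the admissible prefixes is unnecessary (the paper works directly from the hypothesis that no $\alpha\in A$ exceeds two prefix entries), and your anticipated characterization is too strong: the set identity $\{c_1,\ldots,c_t\}\setminus\{x\}=\{n-t+2,\ldots,n\}$ is indeed forced when $t\geq2$, but the large prefix values need \emph{not} appear in decreasing order --- e.g.\ a prefix ending $(\ldots,n-1,n,\ldots)$ creates only $231$-type triples with suffix elements, which are harmless; the only constraint on the arrangement is \cref{Convention:AvoidanceForLeadingTerms}. Second, the decreasing order of the large suffix elements is forced by $x$ alone, not by interaction with the large prefix values as you suggest. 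The initial bijection attempt onto $S_{n-t}(123,213)$, which you correctly abandon, should simply be cut.
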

\begin{proof}
Write $x=\min\{c_1,c_2,\ldots,c_t\}$ and $A=[n]\backslash\{c_1,c_2,\ldots,c_t\}$.

    Suppose there exist $\alpha\in A$ and $1\leq i<j\leq t$ with $c_i,c_j<\alpha$. Then $c_ic_j\alpha$ is either a $123$ pattern or a $213$ pattern. Hence $|S_{n,(c_1,c_2,\ldots,c_t)}(123,213)|=0$.

    Suppose there do not exist $\alpha\in A$ and $1\leq i<j\leq t$ with $c_i,c_j<\alpha$. Let $\tau\in S_{n,(c_1,c_2,\ldots,c_t)}(123,213)$.
    
    Case 1: $x=1$. Then $(\tau(t+1),\tau(t+2),\ldots,\tau(n))$ is a decreasing sequence. Otherwise, there would exist $y,z\in A$ such that $1yz$ is a $123$ pattern. Hence $|S_{n,(c_1,c_2,\ldots,c_t)}(123,213)|=1$.

    Case 2: $x\geq2$.  Since $\tau$ avoids $123$, the subpermutation of $\tau$ on $A\cap\{x+1,x+2,\ldots,n\}$ is decreasing. Since $\tau$ avoids $213$, for all $y\in\{1,2,\ldots,x-1\}$ and for all $z\in A\cap\{x+1,x+2,\ldots,n\}$, $z\in \mathcal{A}_\pi(y)$. So by \cref{Theorem:Pairs3}, we have \[|S_{n,(c_1,c_2,\ldots,c_t)}(123,213)|\leq |S_{x-1}(123,213)|=2^{x-2}.\] It is easy to check that for all $\tau'\in S_{x-1}(123,213)$ and decreasing subpermutation $\tau''$ on $A\backslash[x-1]$, we have \[(c_1,c_2,\ldots,c_t,\tau''(1),\tau''(2),\ldots,\tau''(n-t-x+1),\tau'(1),\tau'(2),\ldots,\tau'(x-1))\in S_{n,(c_1,c_2,\ldots,c_t)}(123,213).\]
Hence, we have \[|S_{n,(c_1,c_2,\ldots,c_t)}(123,213)|=|S_{x-1}(123,213)|=2^{x-2}.\qedhere\]
\end{proof}

\begin{theorem}
If there exist $\alpha,\beta\in[n]\backslash\{c_1,c_2,\ldots,c_t\}$ and $i\in[t]$ with $\min\{c_1,c_2,\ldots,c_t\}<\alpha<c_i<\beta$, then $|S_{n,(c_1,c_2,\ldots,c_t)}(132,213)|=0$; if there exist $\alpha\in[n]\backslash\{c_1,c_2,\ldots,c_t\}$ and $i,j\in[t]$ with $i<j$ and $c_i<\alpha<c_j$ or $c_j<c_i<\alpha$, then $|S_{n,(c_1,c_2,\ldots,c_t)}(132,213)|=0$; otherwise,
\[
    |S_{n,(c_1,c_2,\ldots,c_t)}(132,213)|=2^{\max\{0,\min\{c_1,c_2,\ldots,c_t\}-2\}}.
    \]
\end{theorem}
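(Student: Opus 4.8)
The plan is to follow the trichotomy in the statement: the two vanishing cases I would settle by exhibiting an explicit forbidden pattern, and the ``otherwise'' case I would settle by showing that a permutation in $S_{n,(c_1,\ldots,c_t)}(132,213)$ is completely rigid except for the relative arrangement of the values $\{1,2,\ldots,x-1\}$, where $x:=\min\{c_1,\ldots,c_t\}$; that arrangement will be seen to range freely over $S_{x-1}(132,213)$, which has the asserted cardinality by \cref{Theorem:Pairs3}.

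For the vanishing cases, observe first that every value of $[n]\backslash\{c_1,\ldots,c_t\}$ occupies some position greater than $t$, while $x$ sits at some prefix position $\ell_0\le t$. If there are $i<j$ in $[t]$ and $\alpha\notin\{c_1,\ldots,c_t\}$ with $c_i<\alpha<c_j$ (respectively $c_j<c_i<\alpha$), then $(c_i,c_j,\alpha)$ is already a $132$ (respectively $213$) pattern. If instead there are $\alpha,\beta\notin\{c_1,\ldots,c_t\}$ and $i\in[t]$ with $x<\alpha<c_i<\beta$, I would split on the order of $\alpha$ and $\beta$: if $\alpha$ precedes $\beta$ then $(c_i,\alpha,\beta)\sim 213$; if $\beta$ precedes $\alpha$, then according as $\ell_0<i$ or $\ell_0>i$ one of $(x,c_i,\alpha)\sim 132$ or $(c_i,x,\beta)\sim 213$ occurs. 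In every situation a forbidden pattern is forced, so the count is $0$.

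For the main case, assume neither condition holds. The structural claim is that any $\tau\in S_{n,(c_1,\ldots,c_t)}(132,213)$ equals $(c_1,\ldots,c_t)$, followed by the increasing listing of the set $M:=([n]\backslash\{c_1,\ldots,c_t\})\cap\{x+1,\ldots,n\}$, followed by a $\{132,213\}$-avoiding permutation of $\{1,\ldots,x-1\}$. This follows from two observations, both using that $x$ sits at position $\ell_0\le t$: (a) a value $u<x$ cannot precede a value $w\in M$, for otherwise $(x,u,w)\sim 213$, so the values $\{1,\ldots,x-1\}$ occupy precisely the last $x-1$ positions; and (b) the values of $M$, which then occupy positions $t+1,\ldots,n-x+1$, must appear in increasing order, since a descent $\tau(p)>\tau(q)$ among them would give $(x,\tau(p),\tau(q))\sim 132$. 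Conversely, every word of this shape avoids $\{132,213\}$: a forbidden pattern cannot meet the final block (its values are the smallest and it forms a suffix, while all three positions cannot lie inside it as it is chosen $\{132,213\}$-avoiding, so the pattern's last entry would be below its first, which neither $132$ nor $213$ permits), hence lies within $(c_1,\ldots,c_t)$ followed by the sorted $M$; it is not contained in the prefix (\cref{Convention:AvoidanceForLeadingTerms}) nor in the increasing part $M$, so it spans both, and a short case analysis on whether one or two of its three positions lie in the prefix shows it would force Condition~1 or Condition~2, contrary to assumption. Therefore $\tau\mapsto(\tau(n-x+2),\ldots,\tau(n))$ is a bijection from $S_{n,(c_1,\ldots,c_t)}(132,213)$ onto $S_{x-1}(132,213)$, which has $2^{x-2}$ elements by \cref{Theorem:Pairs3} when $x\ge 2$ and one element (the empty permutation) when $x=1$; in either case the count is $2^{\max\{0,x-2\}}$.

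The step most in need of care is the ``conversely'' bookkeeping: one must verify that the ways the three positions of a would-be $132$ or $213$ pattern can be distributed among the fixed prefix and the sorted block $M$ correspond exactly to the configurations ruled out by Condition~1 and Condition~2, so that these hypotheses are not merely sufficient but also necessary for a valid extension to exist. The remainder is routine pattern-chasing, entirely parallel to the proofs of \cref{Theorem:LeadingTerms213;231} and \cref{Theorem:LeadingTerms123AND213}.
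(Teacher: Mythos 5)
Your proposal is correct and follows essentially the same route as the paper: explicit forbidden-pattern witnesses for the two vanishing cases, and for the main case the same structural decomposition (prefix, then the values above $x$ in increasing order, then a free $\{132,213\}$-avoiding block on $\{1,\ldots,x-1\}$ counted by \cref{Theorem:Pairs3}), which is exactly the adaptation of the proof of \cref{Theorem:LeadingTerms123AND213} that the paper invokes with ``the rest of the proof is similar.'' Your only deviations are cosmetic — a different choice of witness in one subcase of the first vanishing condition, and a fully written-out converse verification where the paper says ``it is easy to check.''
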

\begin{proof}
    Write $x=\min\{c_1,c_2,\ldots,c_t\}$ and $A=[n]\backslash\{c_1,c_2,\ldots,c_t\}$.

    Suppose there exist $\alpha,\beta\in A$ and $i\in[t]$ with $x<\alpha<c_i<\beta$. Let $\tau\in S_{n,(c_1,c_2,\ldots,c_t)}$. If $\alpha\in\mathcal{A}_\tau(\beta)$, then $c_i\alpha\beta$ is a $213$ pattern; and if $\alpha\in\mathcal{D}_\tau(\beta)$, then $x\beta\alpha$ is a $132$ pattern. Hence, $|S_{n,(c_1,c_2,\ldots,c_t)}(132,213)|=0$.

    Now suppose there exist $\alpha\in A$ and $i,j\in[t]$ with $i<j$ and $c_i<\alpha<c_j$ or $c_j<c_i<\alpha$. If $c_i<\alpha<c_j$, then $c_ic_j\alpha$ is a $132$ pattern; and if $c_j<c_i<\alpha$, then $c_ic_j\alpha$ is a $213$ pattern. Hence $|S_{n,(c_1,c_2,\ldots,c_t)}(132,213)|=0$.

    The rest of the proof is similar to the proof of \cref{Theorem:LeadingTerms123AND213}.
\end{proof}

\begin{theorem}
If there exist $\alpha\in[n]\backslash\{c_1,c_2,\ldots,c_t\}$ and $i,j\in[t]$ with $i<j$ and $c_i<\alpha<c_j$ or $\alpha<c_i<c_j$, then $|S_{n,(c_1,c_2,\ldots,c_t)}(132,231)|=0$; otherwise,
\[
    |S_{n,(c_1,c_2,\ldots,c_t)}(132,231)|=2^{\max\{0,\min\{c_1,c_2,\ldots,c_t\}-2\}}.
    \]
\end{theorem}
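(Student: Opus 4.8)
The plan is to reuse the two-step template of \cref{Theorem:LeadingTerms123AND213}: first handle the ``zero'' case, then, when it does not occur, build a bijection between $S_{n,(c_1,c_2,\ldots,c_t)}(132,231)$ and $S_{x-1}(132,231)$, where $x=\min\{c_1,c_2,\ldots,c_t\}$. Combined with \cref{Theorem:Pairs3} (together with $|S_0(132,231)|=|S_1(132,231)|=1$), this yields $|S_{n,(c_1,c_2,\ldots,c_t)}(132,231)|=|S_{x-1}(132,231)|=2^{\max\{0,x-2\}}$. Throughout set $A=[n]\setminus\{c_1,c_2,\ldots,c_t\}$. For the zero case, if $\alpha\in A$ and $i<j$ satisfy $c_i<\alpha<c_j$ or $\alpha<c_i<c_j$, then for any $\tau$ with the prescribed prefix we have $\alpha=\tau(k)$ for some $k>t$, and $(\tau(i),\tau(j),\tau(k))=(c_i,c_j,\alpha)$ is a $132$ pattern in the first case and a $231$ pattern in the second; hence the count is $0$.

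The engine for the remaining case is the standard fact that $\rho\in S_k(132,231)$ if and only if $\rho$ is \emph{V-shaped} — strictly decreasing and then strictly increasing, so that the value $1$ sits at the turn — which is immediate by induction, since the maximal entry must lie at an end. By \cref{Convention:AvoidanceForLeadingTerms} the prefix avoids $\{132,231\}$, hence is V-shaped: $c_1>c_2>\cdots>c_m=x<c_{m+1}<\cdots<c_t$ for some $m\in[t]$. I would then record the dichotomy available when no bad $\alpha$ exists: either (i) $m=t$, so the prefix is strictly decreasing and $x=c_t$; or (ii) $m<t$, in which case feeding the ascent pair $(m,t)$ into the hypothesis that no such $\alpha$ exists forces every element of $A$ to exceed $c_t$, whence $\{1,\ldots,c_t\}\subseteq\{c_1,c_2,\ldots,c_t\}$, so $1$ is in the prefix and $x=1$.

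Now fix $\tau\in S_{n,(c_1,c_2,\ldots,c_t)}(132,231)$. The entries $1,\ldots,x-1$ all lie in $A$, hence in the suffix. Two short pattern arguments using the prefix entry $x$ pin down the suffix: if two entries $v>u$ of $A\cap\{x+1,\ldots,n\}$ appeared in $\tau$ in the order $v$ before $u$, then $(x,v,u)$ would be a $132$ pattern; and if an entry $y<x$ of $A$ appeared before an entry $z>x$ of $A$, then $(x,z,y)$ would be a $231$ pattern. Hence, past the prefix, $\tau$ lists a permutation $\rho$ of $\{1,\ldots,x-1\}$ followed by the elements of $A\cap\{x+1,\ldots,n\}$ in increasing order; being a subpermutation of $\tau$, $\rho$ avoids $\{132,231\}$, so $\rho\in S_{x-1}(132,231)$. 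Thus $\tau\mapsto\rho$ is well defined and injective, since the prefix and the increasing tail are fixed. For surjectivity I would take any $\rho\in S_{x-1}(132,231)$ and check that $(c_1,c_2,\ldots,c_t)$, then $\rho$, then $A\cap\{x+1,\ldots,n\}$ in increasing order, is V-shaped — in case (i) the prefix is a strictly decreasing run of values all exceeding the entries of $\rho$, while in case (ii) $\rho$ is the empty permutation and $A$ consists of values all exceeding $c_t$ — hence lies in $S_{n,(c_1,c_2,\ldots,c_t)}(132,231)$ and is sent to $\rho$. This gives the claimed count.

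The main obstacle is the surjectivity (``converse'') step rather than any single hard idea: verifying that the reconstructed permutation genuinely avoids both patterns is exactly where the hypothesis that no bad $\alpha$ exists must re-enter, since an ascent pair in the prefix together with a small outside entry would otherwise create a forbidden pattern; one must also keep the degenerate ranges $x=1$ (so $\rho$ is empty and $|S_0(132,231)|=1$) and $x=2$ in view so that $2^{\max\{0,x-2\}}$ is matched correctly. Anyone preferring not to invoke the V-shape characterization can instead carry out the surjectivity step by a direct case split on which of the three blocks — prefix, $\rho$, increasing tail — each entry of a hypothetical $132$ or $231$ pattern occupies; the only delicate cases, where the ``peak'' of the pattern is a prefix entry, are precisely those excluded by \cref{Convention:AvoidanceForLeadingTerms} and the hypothesis on $\alpha$.
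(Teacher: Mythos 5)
Your proposal is correct and follows essentially the same route as the paper, which simply derives the block structure (a $\{132,231\}$-avoiding permutation of $\{1,\ldots,x-1\}$ followed by the remaining large entries in increasing order) and defers the details to the analogous argument for $\{123,213\}$; your explicit use of the V-shape characterization to verify the converse direction is a clean way to carry out what the paper leaves implicit. One sentence needs fixing: in your $231$ argument the positional order is reversed --- the forbidden configuration is an entry $z>x$ of $A$ appearing \emph{before} an entry $y<x$ of $A$, which yields the $231$ pattern $(x,z,y)$; as written, your hypothesis ($y$ before $z$) contradicts the (correct) conclusion you then draw that the small entries precede the large ones in the suffix.
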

\begin{proof}
    Write $x=\min\{c_1,c_2,\ldots,c_t\}$ and $A=[n]\backslash\{c_1,c_2,\ldots,c_t\}$.

    Suppose there exist $\alpha\in A$ and $i,j\in[t]$ with $i<j$ and $c_i<\alpha<c_j$ or $\alpha<c_i<c_j$. If $c_i<\alpha<c_j$, then $c_ic_j\alpha$ is a $132$ pattern; and if $\alpha<c_i<c_j$, then $c_ic_j\alpha$ is a $231$ pattern. Hence, $|S_{n,(c_1,c_2,\ldots,c_t)}(132,231)|=0$.

    The rest of the proof is similar to the proof of \cref{Theorem:LeadingTerms123AND213}.
\end{proof}

\begin{theorem}
If $\{c_1,c_2,\ldots,c_t\}$ is a set of consecutive integers, then \[|S_{n,(c_1,c_2,\ldots,c_t)}(123,312)|=0,~1,\ \text{or}\ \min\{c_1,c_2,\ldots,c_t\};\] otherwise, $|S_{n,(c_1,c_2,\ldots,c_t)}(123,312)|=0$ or $1$.
\end{theorem}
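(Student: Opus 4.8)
The plan is to first pin down the exact shape of an arbitrary permutation in $S_n(123,312)$, and then simply read off how many such permutations have a prescribed prefix.

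\emph{Step 1: a structure lemma for $S_n(123,312)$.} I would prove that $\tau\in S_n(123,312)$ if and only if there are integers $0\le s<b\le n$ with
\[
\tau=(b,b-1,\ldots,s+1,\;n,n-1,\ldots,b+1,\;s,s-1,\ldots,1),
\]
that is, $\tau$ is the concatenation of three decreasing runs carrying the consecutive value-blocks $\{s+1,\ldots,b\}$, $\{b+1,\ldots,n\}$, $\{1,\ldots,s\}$ (the middle run empty when $b=n$, the last run empty when $s=0$). The ``if'' direction is a direct check: an increasing triple would need one entry from each run, impossible since the last run lies entirely below the first; and a $312$ occurrence is excluded by checking the few ways its three entries can be spread among the runs. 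For ``only if'' I would induct on $n$, splitting on the position $q$ of the entry $1$. If $q=n$, the first $n-1$ entries form a $(123,312)$-avoiding permutation of $\{2,\ldots,n\}$ whose standardization has the claimed form by induction, and restoring the values and appending $1$ gives the claimed form for $\tau$. If $q<n$, then avoidance of $123$ (using $\tau(q)=1$) forces the entries after position $q$ to be decreasing; avoidance of $312$ (again using $\tau(q)=1$) forces every entry before position $q$ to be smaller than every entry after it; and then avoidance of $123$ forces the entries before position $q$ to be decreasing as well. This is exactly the $s=0$ case of the claimed form.

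\emph{Step 2: counting completions of the prefix.} Given the prefix $(c_1,\ldots,c_t)$, which by \cref{Convention:AvoidanceForLeadingTerms} already avoids $123$ and $312$, I would count the pairs $(s,b)$ for which the first $t$ entries of the associated permutation equal $(c_1,\ldots,c_t)$, distinguishing two cases. If $t\le b-s$, the first $t$ entries are $(b,b-1,\ldots,b-t+1)$, so a completion of this kind exists exactly when $(c_1,\ldots,c_t)$ is a decreasing run of consecutive integers ($c_j=c_1-j+1$); then $b=c_1$ is forced while $s$ may be any value in $\{0,1,\ldots,c_1-t\}$. These give $c_1-t+1=c_t=\min\{c_1,\ldots,c_t\}$ distinct permutations when $c_1<n$, but they all collapse to the single permutation $(n,n-1,\ldots,1)$ when $c_1=n$. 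If instead $t>b-s$ and $b<n$, the prefix contains all of the first run followed by at least the start of the second run, which begins with $n$; hence the first ascent of the prefix determines $b$ (it is $c_1$), $s$ (one less than the entry immediately before the ascent), and $n$ (the entry immediately after the ascent), and the whole permutation is then forced, so there is at most one completion of this kind. (The leftover possibility $t>b-s$ with $b=n$ again only yields $(n,n-1,\ldots,1)$, with a decreasing-consecutive prefix already counted.) Assembling the cases: the count equals $\min\{c_1,\ldots,c_t\}$ when the prefix is a decreasing run of consecutive integers with $c_1<n$, and it is $0$ or $1$ otherwise; in particular, if $\{c_1,\ldots,c_t\}$ is not a set of consecutive integers then the prefix cannot be such a run, so the count is $0$ or $1$.

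\emph{Main obstacle.} The technical heart is the ``only if'' half of Step 1 — in particular making sure the reassembly in the $q=n$ inductive step produces precisely the three-run form with the correctly shifted parameters, and that the $q<n$ analysis simultaneously forces both blocks flanking the entry $1$. A secondary subtlety that must not be overlooked is the degeneracy at $c_1=n$: the three-run representation of $(n,n-1,\ldots,1)$ is highly non-unique, so those pairs $(s,b)$ must be collapsed to a single permutation; otherwise one would wrongly report $\min\{c_1,\ldots,c_t\}$ where the true value is $1$. Once Step 1 is established, Step 2 is essentially bookkeeping.
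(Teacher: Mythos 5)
Your proposal is correct, but it is organized quite differently from the paper's proof. The paper never isolates a global classification of $S_n(123,312)$; instead it does casework directly on the prefix (whether $\{c_1,\ldots,c_t\}$ is consecutive, whether some $\alpha\notin\{c_1,\ldots,c_t\}$ creates a forced $123$ or $312$ with two prefix entries, whether $\min\{c_1,\ldots,c_t\}=n-t+1$), and in the one nontrivial case derives in situ that the suffix consists of the decreasing block on $\{1,\ldots,x-1\}$ shuffled with the decreasing block on $\{y+1,\ldots,n\}$, with the latter forced to sit contiguously, giving $\binom{x-1+1}{1}=x$ shuffles. Your Step 1 packages that same structural content once and for all as a clean three-decreasing-run normal form $(b,\ldots,s+1,\,n,\ldots,b+1,\,s,\ldots,1)$ for every element of $S_n(123,312)$, after which the enumeration by prefix really is bookkeeping; a useful sanity check you could add is that your parametrization yields $\binom{n}{2}$ permutations with $b<n$ plus the single decreasing one, matching $|S_n(123,312)|=\binom{n}{2}+1$ from \cref{Theorem:Pairs3}. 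The trade-off is that your route requires proving the ``only if'' half of the structure lemma (your induction on the position of the entry $1$ is sound, and your handling of the degenerate $b=n$ collapse is exactly the point one must not miss), whereas the paper's route avoids any global classification at the cost of more case analysis; your version also gives slightly sharper information, namely precisely which prefixes realize each of the values $0$, $1$, and $\min\{c_1,\ldots,c_t\}$. One small phrasing issue in your Step 2, Case B: the entry immediately after the first ascent is not ``determined'' to be $n$ so much as required to equal $n$ for a completion to exist; if it does not, that case contributes $0$ rather than $1$, which is still within your claimed bound.
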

\begin{proof}
Write $x=\min\{c_1,c_2,\ldots,c_t\}$, $y=\max\{c_1,c_2,\ldots,c_t\}$, and $A=[n]\backslash\{c_1,c_2,\ldots,c_t\}$. 

Suppose $\{c_1,c_2,\ldots,c_t\}$ is a set of consecutive integers. Then $\{c_1,c_2,\ldots,c_t\}=\{x,x+1,\ldots,x+t-1\}$. We split into three cases:

Case 1: There exist $\alpha\in A$ and $1\leq i<j\leq t$ with $c_i<c_j<\alpha$. Then $c_ic_j\alpha$ is a $123$ pattern and hence $|S_{n,(c_1,c_2,\ldots,c_t)}(123,312)|=0$.

Case 2: $x=n-t+1$ and there do not exist $\alpha\in A$ and $1\leq i<j\leq t$ with $c_i<c_j<\alpha$. Let $\tau\in S_{n,(c_1,c_2,\ldots,c_t)}(123,312)$. Then $(\tau(t+1),\tau(t+2),\ldots,\tau(n))$ is a subpermutation on $\{1,2,\ldots,x-1\}$. If $(\tau(t+1),\tau(t+2),\ldots,\tau(n))$ is not decreasing, then we would have a $312$ pattern. It is easy to check that $(c_1,c_2,\ldots,c_t,x-1,x-2,\ldots,1)$ avoids both $123$ and $312$ patterns. Hence $|S_{n,(c_1,c_2,\ldots,c_t)}(123,312)|=1$.

Case 3: $x<n-t+1$ and there do not exist $\alpha\in A$ and $1\leq i<j\leq t$ with $c_i<c_j<\alpha$. In this case, we have $y<n$. Let $\tau\in S_{n,(c_1,c_2,\ldots,c_t)}(123,312)$. Let $\tau'$ be the subpermutation on $\{1,2,\ldots,x-1\}$ and let $\tau''$ be the subpermutation on $\{y+1,y+2,\ldots,n\}$. 
        Since $\tau$ avoids $123$, $\tau''=(n,n-1,\ldots,y+1)$, and since $\tau$ avoids $312$, $\tau'=(x-1,x-2,\ldots,1)$. Moreover, if $\tau(i)\in\{1,2,..,x-1\}$ and $\tau(j),\tau(k)\in\{y+1,y+2,\ldots,n\}$ with $j<k$, 
either $\tau(j),\tau(k)\in\mathcal{A}_\tau(\tau(i))$ or $\tau(j),\tau(k)\in\mathcal{D}_\tau(\tau(i))$. Otherwise, $\tau(j)\tau(i)\tau(k)$ would be a $312$ pattern. Therefore
the number of shuffles of $\tau'$ and $\tau''$ that do not create a $312$ pattern is 
simply $\binom{x-1+1}{1}=x$. It is easy to check that none of these shuffles creates a $123$ pattern. Hence $|S_{n,(c_1,c_2,\ldots,c_t)}(123,312)|=x$.

Now suppose $\{c_1,c_2,\ldots,c_t\}$ is not a set of consecutive integers. There are three cases:

Case 4: If there exists $\alpha\in A$, $1\leq i<j\leq t$ with $c_i<c_j<\alpha$, then $c_ic_j\alpha$ is a $123$ pattern and hence $|S_{n,(c_1,c_2,\ldots,c_t)}(123,312)|=0$.

Case 5: If there exists $\alpha\in A$, $1\leq i<j\leq t$ with $c_j<\alpha<c_i$, then $c_ic_j\alpha$ is a $312$ pattern and hence $|S_{n,(c_1,c_2,\ldots,c_t)}(123,312)|=0$.

Case 6: The conditions in Case 4 and Case 5 are not met. Let $\tau\in S_{n,(c_1,c_2,\ldots,c_t)}(123,312)$. Let $\tau'$ be the subpermutation on $\{1,2,\ldots,y-1\}\cap A$ and let $\tau''$ be the subpermutation on $\{x+1,x+2,\ldots,n\}\cap A$. 
        Since $\tau$ avoids both $123$ and $312$, both $\tau'$ and $\tau''$ are decreasing. Since $\{c_1,c_2,\ldots,c_t\}$ is not a set of consecutive integers, there exists $\alpha\in A$ with $x<\alpha<y$. Since $\alpha$ is in both $\tau'$ and $\tau''$, $(\tau(t+1),\tau(t+2),\ldots,\tau(n))$ is a decreasing permutation on $A$. It is easy to see that if the conditions in Case 4 and Case 5 are not met and $(\tau(t+1),\tau(t+2),\ldots,\tau(n))$ is decreasing, then $\tau$ avoids both $123$ and $312$ patterns. Hence we have $|S_{n,(c_1,c_2,\ldots,c_t)}(123,312)|=1$.
\end{proof}
\begin{theorem}
    If $(c_1,c_2,\ldots,c_t)=(n,n-1,\ldots,n-t+1)$, then 
    \[
    |S_{n,(c_1,c_2,\ldots,c_t)}(123,231)|=\binom{n-t}{2}+1;
    \]
    otherwise, $|S_{n,(c_1,c_2,\ldots,c_t)}(123,231)|=0$ or $1$.
\end{theorem}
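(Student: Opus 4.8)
The plan is to handle the two alternatives of the statement by quite different arguments. For the main case $(c_1,\ldots,c_t)=(n,n-1,\ldots,n-t+1)$, I would use a prefix-stripping bijection, in the spirit of the proofs of \cref{Theorem:LeadingTerms213;231} and \cref{Theorem:LeadingTerms123;132}. The fixed prefix is decreasing and occupies exactly the top $t$ values $\{n-t+1,\ldots,n\}$, so every prefix entry exceeds every entry of the suffix $(\tau(t+1),\ldots,\tau(n))$, which is therefore a permutation in $S_{n-t}$. A short check shows that no occurrence of $123$ or $231$ in $\tau$ can involve a prefix entry: if the smallest entry of such an occurrence were a prefix entry, the next entry would also have to be a larger prefix entry, contradicting the prefix being decreasing; and an occurrence whose smallest entry is a suffix entry lies entirely in the suffix. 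Hence $\tau\mapsto(\tau(t+1),\ldots,\tau(n))$ is a bijection from $S_{n,(c_1,\ldots,c_t)}(123,231)$ onto $S_{n-t}(123,231)$, and by \cref{Theorem:Pairs3} the latter has $\binom{n-t}{2}+1$ elements.

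For the ``otherwise'' case I would first record the structural rigidity of $(123,231)$-avoiders. The claim is: if $\pi\in S_m(123,231)$ and $\pi(1)\neq m$, then, writing $q$ for the position of $m$ in $\pi$ (so $q\geq 2$), one has $\pi=(q-1,q-2,\ldots,1,m,m-1,\ldots,q+1,q)$; in particular $\pi$ is determined by $\pi(1)=q-1$. This follows from three short observations: an ascent among the entries preceding $m$ would, together with $m$, form a $123$; $231$-avoidance forces every entry after $m$ to exceed every entry before $m$, which identifies both value sets and, with the first observation, fixes the initial block as $q-1,q-2,\ldots,1$; and finally an ascent among the entries after $m$, together with $\pi(1)$, would form a $123$, so that block is decreasing as well.

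With the claim available, the ``otherwise'' case reduces to bookkeeping. Let $i\geq 0$ be the largest index with $c_1=n,\ c_2=n-1,\ \ldots,\ c_i=n-i+1$; since $(c_1,\ldots,c_t)\neq(n,\ldots,n-t+1)$ we have $i<t$, so $c_{i+1}$ exists and $c_{i+1}\neq n-i$. For any $\tau\in S_{n,(c_1,\ldots,c_t)}(123,231)$, the entries in positions $i+1,\ldots,n$ are exactly the values $1,2,\ldots,n-i$ in some order, hence form a permutation in $S_{n-i}$ that avoids $123$ and $231$ and whose maximum $n-i$ is not in its first position (that entry being $c_{i+1}\neq n-i$). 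By the claim this sub-permutation equals $(q-1,\ldots,1,\,n-i,\,n-i-1,\ldots,q)$ with $q=c_{i+1}+1$, so all of $\tau$ is determined. Therefore there is at most one such $\tau$, i.e.\ $|S_{n,(c_1,\ldots,c_t)}(123,231)|$ is $0$ or $1$.

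I expect the delicate points to be: stating and proving the structural claim carefully (including the small cases, and being precise that the sub-permutation on positions $i+1,\ldots,n$ really has value set $\{1,\ldots,n-i\}$ so that the claim applies without any standardization); and observing that the forced candidate $\tau$ need not actually begin with the prescribed prefix $(c_1,\ldots,c_t)$, which is exactly where the possibility ``$0$'' comes from. The main-case bijection should otherwise be routine once the ``no occurrence touches a prefix entry'' observation is written out.
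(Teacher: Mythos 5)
Your proof is correct. The main case $(c_1,\ldots,c_t)=(n,n-1,\ldots,n-t+1)$ is handled exactly as in the paper: strip the prefix, observe that no occurrence of $123$ or $231$ can use a prefix entry (the paper simply asserts this is ``easy to see''), and invoke \cref{Theorem:Pairs3}; just make sure your ``short check'' treats the $231$ case correctly, since there the smallest entry of an occurrence sits in the \emph{last} position, so the right argument is that the first two entries would have to be an ascending pair of prefix entries, or a prefix entry followed by a larger suffix entry, both impossible. Where you genuinely diverge is the ``otherwise'' half. The paper runs a three-way case analysis keyed on the prefix: if the value set is $\{n-t+1,\ldots,n\}$ but the order is not decreasing, or if some $c_i<c_j$ ($i<j$) has an element of the complement above or below both, it exhibits a forbidden pattern inside the prefix plus one free entry and gets $0$; in the remaining case it forces the suffix to be a specific concatenation of two decreasing runs and gets exactly $1$. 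You instead prove a single rigidity lemma — a $(123,231)$-avoider of $[m]$ not beginning with $m$ equals $(q-1,\ldots,1,m,m-1,\ldots,q)$ and is determined by its first entry — and apply it to the tail after the maximal initial run $n,n-1,\ldots,n-i+1$, whose value set is exactly $[n-i]$ so no standardization is needed. This is cleaner and more unified: it replaces the paper's case distinctions with one structural statement, and it makes transparent that the count is $1$ precisely when the prescribed prefix is consistent with the unique forced permutation and $0$ otherwise. The trade-off is that your argument as written only delivers ``at most one'' (which is all the theorem claims), whereas the paper's Case 3 also verifies that the forced candidate genuinely avoids both patterns and hence pins down exactly when the answer is $1$ rather than $0$.
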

\begin{proof}
Write $x=\min\{c_1,c_2,\ldots,c_t\}$ and $A=[n]\backslash\{c_1,c_2,\ldots,c_t\}$.

    Suppose $(c_1,c_2,\ldots,c_t)=(n,n-1,\ldots,n-t+1)$. It is easy to see that $\tau\in S_{n,(c_1,c_2,\ldots,c_t)}$ avoids both $123$ and $231$ if and only if $(\tau(t+1),\tau(t+2),\ldots,\tau(n))$ avoids both $123$ and $231$. So by \cref{Theorem:Pairs3},
\[
|S_{n,(c_1,c_2,\ldots,c_t)}(123,231)|=|S_{n-t}(123,231)|=\binom{n-t}{2}+1.
\]

Now suppose $(c_1,c_2,\ldots,c_t)\neq (n,n-1,\ldots,n-t+1)$. We split into three cases:

Case 1: $\{c_1,c_2,\ldots,c_t\}=\{n-t+1,n-t+2,\ldots,n\}$. Then there exist indices $1\leq i<j\leq t$ and $\alpha\in A$ such that $\alpha<c_i<c_j$. So $c_ic_j\alpha$ is a $231$ pattern. Hence $|S_{n,(c_1,c_2,\ldots,c_t)}(123,231)|=0$.

Case 2: $\{c_1,c_2,\ldots,c_t\}\neq \{n-t+1,n-t+2,\ldots,n\}$ and there exist indices $1\leq i<j\leq t$ and $\alpha\in A$ such that $c_i<c_j<\alpha$ or $\alpha<c_i<c_j$, then $c_ic_j\alpha$ is either a $123$ pattern or a $231$ pattern. Hence $|S_{n,(c_1,c_2,\ldots,c_t)}(123,231)|=0$.

Case 3: The conditions for Case 1 and Case 2 are not met. Let $\tau\in S_{n,(c_1,c_2,\ldots,c_t)}(123,231)$. Since $\tau$ avoids $123$, the subpermutation $\tau'$ of $\tau$ on $\{x+1,x+2,\ldots,n\}\cap A$ is decreasing. Since $\tau$ avoids $231$, $\{x+1,x+2,\ldots,n\}\cap A\subseteq\mathcal{D}_\tau(i)$ for all $i<x$. Notice that since $\{c_1,c_2,\ldots,c_t\}\neq \{n-t+1,n-t+2,\ldots,n\}$, there exists $\alpha\in A$ with $\alpha>x$. Now if the subpermutation $\tau''$ of $\tau$ on $\{1,2,\ldots,x-1\}$ is not decreasing, we would have a $123$ pattern. So \[\tau=(c_1,c_2,\ldots,c_t,x-1,x-2,\ldots,1, \tau'(1),\tau'(2),\ldots,\tau'(n-t-x+1)).\] It is easy to check that $\tau$ avoids both $123$ and $231$. Hence, we have $|S_{n,(c_1,c_2,\ldots,c_t)}(123,231)|=1$.
\end{proof}


\section{The pair 3412 and 3421}\label{Section:Pair4}

The goal of this section is to show that the counting argument used in the 
proof of \cref{Theorem:LeadingTerms231} can be generalized to permutations avoiding both 3412 and 3421. We need the following result proved by Kremer \cite[Corollary 9]{Kremer2000}:
\begin{theorem}\label{Theorem:34123421}

For all $n\geq1$,
\[
|S_n(3412,3421)|=\mathbb{S}_{n-1},
\]
where $\mathbb{S}_{n-1}$ is the $(n-1)$st large (big) Schr\"{o}der number.
\end{theorem}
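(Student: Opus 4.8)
The plan is to prove \cref{Theorem:34123421} by the same block-decomposition technique used for \cref{Theorem:LeadingTerms231}, but adapted to the length-four patterns $3412$ and $3421$ with the twist that the ``blocks'' may overlap. First I would recall the structure a permutation $\tau \in S_n$ avoiding both $3412$ and $3421$ must have: in any such $\tau$, whenever there is a descent pattern matching the prefix $34$ of both patterns — that is, indices $i<j$ with $\tau(i) < \tau(j)$ and two later positions both smaller than $\tau(i)$ — we get a forbidden configuration. Concretely, I would fix the position of the value $1$ (or, dually, decompose around the leftmost ``deep'' entry), and argue that the entries left of $1$ split into an initial segment together with a tail, where the tail entries are forced to be larger than everything in the corresponding prefix block, yielding a recursive decomposition of $\tau$ into a pair of smaller $\{3412,3421\}$-avoiding permutations that may share one ``pivot'' entry.

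The key steps, in order, would be: (1) establish the exact structural characterization of $S_n(3412,3421)$ in terms of where the value $1$ sits and which entries can precede it without creating a $34{\ast}{\ast}$ pattern; (2) from this characterization, set up a recurrence for $s_n := |S_n(3412,3421)|$ by summing over the position $k$ of the value $1$ and over whether the split at that position is ``clean'' (disjoint blocks) or ``overlapping'' (one shared pivot entry), producing two families of terms; (3) show that the resulting recurrence is exactly the standard recurrence for the large Schröder numbers, namely $\mathbb{S}_n = 3\mathbb{S}_{n-1} + \sum_{j=1}^{n-2}\mathbb{S}_j \mathbb{S}_{n-1-j}$ (equivalently $\mathbb{S}_n = \mathbb{S}_{n-1} + \sum_{j=0}^{n-1}\mathbb{S}_j \mathbb{S}_{n-1-j}$), cf.\ \cite[p.~446]{Bona2022}; and (4) check the base cases against Table \ref{first_table} and conclude $s_n = \mathbb{S}_{n-1}$ by induction. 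The ``overlap'' in the blocks is precisely the combinatorial manifestation of the extra $3\mathbb{S}_{n-1}$ (versus the single $\mathbb{S}_{n-1}$) in the Schröder recurrence compared with the Catalan recurrence, which is the point flagged in the introduction.

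The main obstacle I anticipate is step (1): getting the structural characterization tight enough that the block decomposition is genuinely bijective rather than merely an inequality. For the $231$ case in \cref{Theorem:LeadingTerms231}, the descendants of a large entry are forced to lie entirely in later blocks, giving a clean product; here, because the forbidden patterns have length four, there is one ``slot'' of freedom — an entry that could be placed either as the top of one block or the bottom of the next — and I must show this accounts for exactly one extra degree of freedom (not zero, not two) and that no further cross-block interactions occur. Carefully ruling out spurious $3412$ or $3421$ patterns that straddle three or more blocks, by the same kind of case analysis (on which of $a,b,c,d$ lie in the prefix versus the blocks) as in the proof of \cref{Theorem:LeadingTerms231}, is the technical heart of the argument. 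Once the decomposition is pinned down, translating it into the Schröder recurrence and invoking \cref{Theorem:34123421}'s statement reduces to bookkeeping.
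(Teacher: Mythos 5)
The first thing to say is that the paper does not prove this statement at all: \cref{Theorem:34123421} is imported directly from Kremer \cite[Corollary 9]{Kremer2000}, and all of Section 6 is built on top of it. So there is no in-paper proof to match, and any actual proof you give is by definition a different route. That said, your plan is essentially the paper's own \cref{Theorem:34123421leading} and \cref{Corollary:RecurrenceSchroder} run in reverse. There the authors decompose a $\{3412,3421\}$-avoider by its leading value $r=\tau(1)$, show that positions $2,\ldots,r-1$ must carry values from $\{1,\ldots,r-1\}$ with exactly one small value leaking past position $r-1$ to serve as the shared pivot of the second block, obtain $|S_{n,r}(3412,3421)|=\mathbb{S}_{r-2}\mathbb{S}_{n-r}$ for $2<r<n$ and $\mathbb{S}_{n-2}$ for $r\in\{1,2,n\}$, and sum over $r$ to get the Schr\"oder recurrence. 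If you replace each invocation of Kremer's count for the smaller blocks by the inductive hypothesis $s_m=\mathbb{S}_{m-1}$ for $m<n$, that argument becomes a complete self-contained proof, and your diagnosis of the single overlapping pivot as the combinatorial source of the extra term in the Schr\"oder recurrence (versus Catalan) is exactly right. One caveat: you must take the recurrence $\mathbb{S}_n=\mathbb{S}_{n-1}+\sum_{j=0}^{n-1}\mathbb{S}_j\mathbb{S}_{n-1-j}$ as independently known (e.g.\ by generating functions, as in \cite{QiGuo2017}), not from \cref{Corollary:RecurrenceSchroder}, which the paper derives from the very statement you are proving.

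The genuine gap is your step (1), which you yourself flag: the structural characterization is asserted rather than established, and your chosen pivot --- the position of the value $1$ --- is an awkward one for this pattern pair. The constraint imposed by avoiding $3412$ and $3421$ is that for every $12$-occurrence $\tau(i)<\tau(j)$ with $i<j$, at most one entry to the right of position $j$ may be smaller than $\tau(i)$; this is indexed by the ascent $(i,j)$, not by the location of the value $1$, so conditioning on where $1$ sits does not by itself split $\tau$ into two independent avoiders sharing a pivot. Pivoting on the leading value $r=\tau(1)$, as in the proof of \cref{Theorem:34123421leading}, does: taking $\tau(i)=r$ and $\tau(j)$ any later larger entry forces all but one of the values $1,\ldots,r-1$ into positions $2,\ldots,r-1$, which is precisely the ``one slot of freedom'' you anticipate, and the five-case straddling analysis needed to show the decomposition is exact is already carried out there. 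I would rewrite step (1) around $\tau(1)$ and reuse that case analysis essentially unchanged; as written, the proposal leaves the technical heart of the argument undone.
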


For the rest of this section, $\mathbb{S}_n$ is the $n$th large (big) Schr\"{o}der number for all $n\in\mathbb{N}$.

We first present our result on $S_{n,r}(3412,3421)$ since it has an easier presentation but still shows the subtle difference between this pair of pattern of length four and \cref{Theorem:LeadingTerms231}. In addition, our result on $S_{n,r}(3412,3421)$ also allows us to provide an alternate proof of a recurrence relation on the large Schr\"{o}der numbers.

\begin{theorem}\label{Theorem:34123421leading}
For all $n\geq2$ and $r\in\{1,2,n\}$, we have
\begin{equation*}
    |S_{n,r}(3412,3421)|=\mathbb{S}_{n-2};
\end{equation*}
and for all $n\geq4$ and $2<r<n$, we have
\begin{equation*}
    |S_{n,r}(3412,3421)|=\mathbb{S}_{r-2}\mathbb{S}_{n-r}.
\end{equation*}    
\end{theorem}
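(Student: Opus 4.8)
The plan is to mimic the block-decomposition strategy used in the proof of Theorem \ref{Theorem:LeadingTerms231}, adapting it to the pair $\{3412,3421\}$. Fix $\tau\in S_{n,r}(3412,3421)$ and write the suffix $(\tau(2),\ldots,\tau(n))$ as a permutation on $[n]\setminus\{r\}$. The key structural observation is that, because both forbidden patterns begin with the symbols $34$ (i.e.\ the two largest entries, in increasing order, in the first two positions) and end with the two smallest entries, the leading $r$ forces a near-separation of the suffix into a ``small block'' on $\{1,2,\ldots,r-1\}$ and a ``large block'' on $\{r+1,r+2,\ldots,n\}$. More precisely, I would first show: if $y\in\{1,\ldots,r-1\}$ appears in the suffix \emph{before} some $z\in\{r+1,\ldots,n\}$, then essentially everything from $\{r+1,\ldots,n\}$ must already have occurred, because otherwise picking $r$, a later large element $z'$, then $y$ and then $z$ would... — here one has to be careful: $3412$ needs a \emph{decreasing then crossing} shape, so the precise claim is that the large elements of $\{r+1,\ldots,n\}$ form at most \emph{two} runs relative to the small block, and the one possible ``overlap'' is exactly the source of the discrepancy with the clean product formula in Theorem \ref{Theorem:LeadingTerms231}. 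This is the step I expect to be the main obstacle: pinning down exactly which shuffles of the small block and the large block avoid both patterns, since unlike the length-three case a single interleaving position is permitted.

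Concretely, for $2<r<n$ I would argue that $\tau$ avoids $\{3412,3421\}$ if and only if: (i) the subpermutation on $\{r+1,\ldots,n\}$ avoids $\{3412,3421\}$; (ii) the subpermutation on $\{1,\ldots,r-1\}$ avoids $\{3412,3421\}$; and (iii) the two blocks are ``concatenated'' in the suffix, i.e.\ every element of $\{1,\ldots,r-1\}$ is a descendant of every element of $\{r+1,\ldots,n\}$ — no overlap is allowed when $r$ sits strictly in the interior, because $r$ together with a large element and two small elements (or two large and ... ) realizes a forbidden pattern. Granting this, the suffix factors as (large block)$\cdot$(small block), giving $|S_{n,r}(3412,3421)| = |S_{n-r}(3412,3421)|\cdot|S_{r-1}(3412,3421)| = \mathbb{S}_{n-r-1}\mathbb{S}_{r-2}$ by Theorem \ref{Theorem:34123421}. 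Hmm — this gives $\mathbb{S}_{n-r-1}\mathbb{S}_{r-2}$, not $\mathbb{S}_{r-2}\mathbb{S}_{n-r}$; so in fact the interior case \emph{does} allow a limited overlap, and the count of valid shuffles of the two blocks must contribute the extra factor $\mathbb{S}_{n-r}/\mathbb{S}_{n-r-1}$ worth of interleavings. So the real content of step (iii) is: the number of ways to shuffle a fixed $\{3412,3421\}$-avoiding small block $\beta$ on $\{1,\ldots,r-1\}$ with the large elements so as to avoid both patterns, summed appropriately, equals $\mathbb{S}_{n-r}$ times the number of small blocks $\mathbb{S}_{r-2}$. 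I would establish this by showing the large elements, together with whichever small elements precede them, themselves form a $\{3412,3421\}$-avoiding permutation on an $(n-r+1)$-element-sized pattern (one ``virtual'' slot absorbing the entire small block), whence Theorem \ref{Theorem:34123421} yields $\mathbb{S}_{n-r}$.

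For the boundary cases $r\in\{1,2,n\}$ I would specialize: when $r=1$, the leading $1$ can never be the ``$3$'' of a $34\cdots$ pattern nor a ``$1$'' or ``$2$'' tail, so $\tau$ avoids $\{3412,3421\}$ iff the suffix on $\{2,\ldots,n\}$ does, giving $\mathbb{S}_{n-2}$ directly from Theorem \ref{Theorem:34123421}; symmetrically (via Lemma \ref{lemma:complements}, since $\{3412^c,3421^c\}=\{3421,3412\}$ maps $r\mapsto n+1-r$) the case $r=n$ reduces to $r=1$. The case $r=2$ needs a short separate argument: the small block is a single element $\{1\}$, so there is nothing to overlap, and one checks that inserting $1$ anywhere after position $1$ and inserting a $\{3412,3421\}$-avoiding permutation of $\{3,\ldots,n\}$ works iff that permutation avoids both patterns and $1$ sits after all of it — or, matching the formula $\mathbb{S}_{n-2}=\mathbb{S}_0\mathbb{S}_{n-2}$, this is just the $r=2$ instance of the interior formula with $\mathbb{S}_{r-2}=\mathbb{S}_0=1$, so no separate work is truly needed beyond noting the interior argument degenerates correctly. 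Finally, I would remark that summing $|S_{n,r}(3412,3421)|$ over $r\in[n]$ and equating with $\mathbb{S}_{n-1}$ via Theorem \ref{Theorem:34123421} reproduces the known recurrence $\mathbb{S}_{n-1} = 2\mathbb{S}_{n-2} + \sum_{r=3}^{n-1}\mathbb{S}_{r-2}\mathbb{S}_{n-r}$, which is the promised combinatorial proof of the large Schröder recurrence; verifying this is the routine bookkeeping step.
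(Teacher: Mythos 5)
Your overall strategy --- a block decomposition in the spirit of \cref{Theorem:LeadingTerms231} with a controlled amount of overlap, reduction to \cref{Theorem:34123421}, boundary cases $r\in\{1,2,n\}$ handled by noting that such an $r$ can never serve as the ``$3$'' of a $3412$ or $3421$ pattern, and the closing recurrence for $\mathbb{S}_n$ --- is exactly the paper's, and those parts of your proposal are fine. The gap is that for $2<r<n$ you never pin down the correct structural lemma. The correct statement (and the one the paper proves) is: since $r\,z\,y\,y'$ is a forbidden pattern whenever some $z>r$ precedes two elements $y,y'<r$, \emph{at most one} element of $\{1,\ldots,r-1\}$ may appear after the first element of $\{r+1,\ldots,n\}$; equivalently, positions $2,\ldots,r-1$ of $\tau$ must all be occupied by elements of $\{1,\ldots,r-1\}$. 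Thus $\tau$ consists of the leading $r$, then the first $r-2$ small elements, then a mixed block $(\tau(r),\ldots,\tau(n))$ of length $n-r+1$ containing all of $\{r+1,\ldots,n\}$ together with the single remaining (last) small element; $\tau$ avoids $\{3412,3421\}$ if and only if the subpermutation on $\{1,\ldots,r-1\}$ and this mixed block each do, which gives $\mathbb{S}_{r-2}\cdot\mathbb{S}_{n-r}$.

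Your final formulation --- ``one virtual slot absorbing the entire small block,'' i.e.\ the $r-1$ small elements travel as a contiguous block that may be inserted anywhere into a $\{3412,3421\}$-avoiding arrangement of the large elements --- characterizes a genuinely different set, and it is the wrong one even though it happens to have the same cardinality $\mathbb{S}_{r-2}\mathbb{S}_{n-r}$. Take $n=5$, $r=3$. The permutation $35124$ has its small block $(1,2)$ contiguous and collapses to the pattern $312$ on three symbols, so your model accepts it; but $3512$ is a $3412$ pattern, so $35124\notin S_{5,3}(3412,3421)$. Conversely, $31542\in S_{5,3}(3412,3421)$ (check its five length-four subsequences), yet its small elements $1$ and $2$ sit at positions $2$ and $5$ and are not contiguous, so your model rejects it. As written, your argument would therefore produce the right number while failing to establish a bijection with $S_{n,r}(3412,3421)$. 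Your intermediate hedge that the large elements ``form at most two runs relative to the small block'' is also too weak: it does not exclude two small elements trailing a large one, which is precisely the forbidden configuration. The fix is the lemma displayed above: all but the \emph{last} small element must precede all large elements, and only that last one may be interleaved into the large block.
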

\begin{proof}
    First, suppose $n\geq 1$ and $r\in\{1,n\}$. Let $\tau\in S_{n,r}$. If $r=1$, then $r<a$ for all $a\in\mathcal{D}_\tau(r)$. If $r=n$, then $r>a$ for all $a\in\mathcal{D}_\tau(r)$. If $r=2$, then there is exactly one $a\in \mathcal{D}_\tau(r)$ with $a<r$. In any case, $rxyz$ is not a 3412 pattern or a 3421 pattern for any $x,y,z\in\mathcal{D}_\tau(r)$. Hence $\tau\in S_{n,r}(3412,3421)$ if and only if $(\tau(2),\tau(3),\ldots,\tau(n))$ avoids both $3412$ and $3421$. Therefore, by \cref{Theorem:34123421}, we have $|S_{n,r}(3412,3421)|=|S_{n-1}(3412,3421)|=\mathbb{S}_{n-2}$.

    Now suppose $n\geq4$ and $2<r<n$. Let $\mathcal{R}$ be a subset of $S_{n,r}$ such that every $\tau\in\mathcal{R}$ has the following properties:
    \begin{itemize}
        \item[(i)] $\{\tau(2),\tau(3),\ldots,\tau(r-1)\}\subseteq\{1,2,\ldots,r-1\}$;
        \item[(ii)] the subpermutation $\tau'$ of $\tau$ on $\{1,2,\ldots,r-1\}$ avoids both $3412$ and $3421$;
        \item[(iii)] $\tau''=(\tau(r),\tau(r+1),\ldots,\tau(n))$ avoids both $3412$ and $3421$.
    \end{itemize}
Let $\tau\in \mathcal{R}$. By \cref{Theorem:34123421}, there are $\mathbb{S}_{r-2}$ ways for $\tau'$ to avoid both $3412$ and $3421$, and for each fixed $\tau'$, there are $\mathbb{S}_{n-r}$ ways for $\tau''$ to avoid both $3412$ and $3421$. Hence, we have $|\mathcal{R}|=\mathbb{S}_{r-2}\mathbb{S}_{n-r}$.

    Now we show that $S_{n,r}(3412,3421)=\mathcal{R}$. Let $\tau\in S_{n,r}(3412,3421)$, $\tau'$ the subpermutation of $\tau$ on $\{1,2,\ldots,r-1\}$, and $\tau''=(\tau(r),\tau(r+1),\ldots,\tau(n))$. Since $\tau$ avoids both $3412$ and $3421$, $\tau'$ 
avoids both $3412$ and $3421$ as well. Similarly, $\tau''$ avoids both $3412$ and $3421$.

We now show that $\tau(i)\in\{1,2,\ldots,r-1\}$ for all $i\in\{2,3,\ldots,r-1\}$. Suppose, by way of contradiction, that $\tau(i)>r$ for some $i\in\{2,3,\ldots,r-1\}$. Then, since $\tau(1)=r$, at most $r-3$ numbers in $\{\tau(1),\tau(2),\ldots,\tau(r-1)\}$ are less than $r$. So there exist $k>j>r-1$ such that $\tau(j),\tau(k)<r$. Now $r\tau(i)\tau(j)\tau(k)$ is either a 3412 pattern or a 3421 pattern. This is a contradiction. Hence, we have $S_{n,r}(3412,3421)\subseteq\mathcal{R}$.

    
    On the other hand, suppose $\tau\in \mathcal{R}$. We will show that $\tau\in S_{n,r}(3412,3421)$. Suppose, by way of contradiction, that $xyzw$ is a subpermutation of $\tau$ which is a $3412$ pattern or a $3421$ pattern. Then we have $z, w<x<y$ and $z,w\in D_\tau(y)$. We split into three cases:

    Case 1: $x=r$. Then $y>r$. Since $\{\tau(2),\tau(3),\ldots,\tau(r-1)\} \subseteq \{1,2,\ldots r-1\}$, we must have $y=\tau(i)$ for some $i>r-1$ and at most one $j>i$ with $\tau(j)<r$. So either $z>r=x$ or $w>r=x$ which is a contradiction.

    Case 2: $x<r$. Since the subpermutation on $\{1,2,\ldots,r-1\}$ avoids both $3412$ and $3421$, we must have $y>r$. The rest of the argument is then the same as Case 1.

    Case 3: $x>r$. Since $\{\tau(2),\tau(3),\ldots,\tau(r-1)\}\subseteq\{1,2,\ldots,r-1\}$, $xyzw$ is a subpermutation of $(\tau(r),\tau(r+1),\ldots,\tau(n))$. Since $(\tau(r),\tau(r+1),\ldots,\tau(n))$ avoids both $3412$ and $3421$, $xyzw$ is not a $3412$ or $3421$ pattern. This is a contradiction.

    This completes the proof that $\mathcal{R}\subseteq S_{n,r}(3412,3421)$.

    Hence we have $S_{n,r}(3412,3421)=\mathcal{R}$, and therefore \[|S_{n,r}(3412,3421)|=|\mathcal{R}|=\mathbb{S}_{r-2}\mathbb{S}_{n-r}.\qedhere\]
\end{proof}
Summing over $r$ in \cref{Theorem:34123421leading}, we have the following recurrence relation for $\mathbb{S}_n$:

\begin{corollary}\label{Corollary:RecurrenceSchroder}
For all $n\geq1$,
\[
\mathbb{S}_{n+1}=\mathbb{S}_n+\sum_{r=0}^{n}\mathbb{S}_{r}\mathbb{S}_{n-r}.
\]
\end{corollary}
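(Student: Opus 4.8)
The plan is to sum the formula in \cref{Theorem:34123421leading} over all admissible leading terms $r$ and compare the result with the global count $|S_{n+2}(3412,3421)|=\mathbb{S}_{n+1}$ supplied by \cref{Theorem:34123421}. The starting point is the trivial decomposition $S_m=\bigsqcup_{r=1}^{m}S_{m,r}$, which is compatible with pattern avoidance, so that for every $m\geq 1$
\[
\mathbb{S}_{m-1}=|S_m(3412,3421)|=\sum_{r=1}^{m}|S_{m,r}(3412,3421)|.
\]
To prove the corollary for a fixed $n\geq 1$, I would apply this with $m=n+2\geq 3$.

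Next I would evaluate each summand using \cref{Theorem:34123421leading}. The three terms with $r\in\{1,2,n+2\}$ each equal $\mathbb{S}_{n}$ (first part of the theorem, whose hypothesis $m\geq2$ holds), while each term with $2<r<n+2$ equals $\mathbb{S}_{r-2}\mathbb{S}_{n+2-r}$ (second part, whose hypothesis $m\geq 4$ holds precisely when this index range is nonempty). This gives
\[
\mathbb{S}_{n+1}=3\mathbb{S}_{n}+\sum_{r=3}^{n+1}\mathbb{S}_{r-2}\mathbb{S}_{n+2-r},
\]
and the substitution $k=r-2$ (with $k$ running over $1,\dots,n-1$, the sum being empty when $n=1$) turns this into $\mathbb{S}_{n+1}=3\mathbb{S}_{n}+\sum_{k=1}^{n-1}\mathbb{S}_{k}\mathbb{S}_{n-k}$. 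Finally I would invoke $\mathbb{S}_0=1$ to absorb the two boundary terms of the convolution, writing $3\mathbb{S}_n=\mathbb{S}_n+\mathbb{S}_0\mathbb{S}_n+\mathbb{S}_n\mathbb{S}_0$, which yields $\mathbb{S}_{n+1}=\mathbb{S}_{n}+\sum_{k=0}^{n}\mathbb{S}_{k}\mathbb{S}_{n-k}$, as claimed.

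There is no real conceptual obstacle; the argument is pure bookkeeping. The only points needing mild care are (i) verifying that the hypotheses of \cref{Theorem:34123421leading} are met for $m=n+2$ over the relevant ranges of $r$ — in particular noticing that the interior range $2<r<n+2$ is empty exactly in the borderline case $n=1$ (that is, $m=3$), where the $m\geq4$ hypothesis would otherwise be violated, so nothing is lost — and (ii) correctly matching $3\mathbb{S}_n+\sum_{k=1}^{n-1}\mathbb{S}_k\mathbb{S}_{n-k}$ against the target $\mathbb{S}_n+\sum_{k=0}^{n}\mathbb{S}_k\mathbb{S}_{n-k}$ using $\mathbb{S}_0=1$.
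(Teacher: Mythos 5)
Your argument is correct and is essentially the paper's own proof: both sum the formula of \cref{Theorem:34123421leading} over $r\in[n+2]$ and compare with $|S_{n+2}(3412,3421)|=\mathbb{S}_{n+1}$ from \cref{Theorem:34123421}, using $\mathbb{S}_0=1$ to fold the boundary terms $r=2$ and $r=n+2$ into the convolution. The only cosmetic difference is that you split off all three of $r\in\{1,2,n+2\}$ as $3\mathbb{S}_n$ and then reabsorb two of them, whereas the paper keeps $r=2$ and $r=n+2$ inside the sum $\sum_{r=2}^{n+2}\mathbb{S}_{r-2}\mathbb{S}_{n+2-r}$ from the start.
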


\begin{proof}
    Let $n\geq1$. Note that by \cref{first_table}, we have $\mathbb{S}_0=1$. So by \cref{Theorem:34123421leading}, we have $|S_{n+2,2}(3412,3421)|=|S_{n+2,n+2}(3412,3421)|=\mathbb{S}_n=\mathbb{S}_n\mathbb{S}_0$. Now, by \cref{Theorem:34123421,Theorem:34123421leading}, we have
    \[
    \mathbb{S}_{n+1}=|S_{n+2}(3412,3421)|=\sum_{r=1}^{n+2}|S_{n+2,r}(3412,3421)|=\mathbb{S}_{n}+\sum_{r=2}^{n+2}\mathbb{S}_{r-2}\mathbb{S}_{n+2-r}=\mathbb{S}_n+\sum_{r=0}^{n}\mathbb{S}_{r}\mathbb{S}_{n-r}.\qedhere
    \]
\end{proof}

\begin{remark}
Qi and Guo \cite[Theorem 5]{QiGuo2017} proved \cref{Corollary:RecurrenceSchroder} using generating functions. In \cite[p. 446]{Bona2022}, it is also noted that \cref{Corollary:RecurrenceSchroder} can also be 
derived from the recurrence $\mathbb{S}_n=\sum_{i=0}^n\binom{2n-i}{i}C_{n-i}$ which was proved by West \cite[p. 255]{West1996}. Our proof of this identity does not use the Catalan numbers and is purely combinatorial. 
\end{remark}

Now we generalize our result for $S_{n,r}(3412,3421)$ to $S_{n,(c_1,c_2,\ldots,c_t)}(3412,3421)$. As before, we assume that $(c_1,c_2,\ldots,c_t)$ avoids both $3412$ and $3421$. 

\begin{theorem}
    Let
    \[
    U=\{c_i:i\in[t]~\text{and there exist}~j,k\in[t]~\text{such that}~i<j<k~\text{and}~c_ic_jc_k~\text{is a 231 pattern}\}
    \]
    and
    \[
    V=\{c_i:i\in[t]~\text{and there exists}~j\in[t]~\text{such that}~i<j~\text{and}~c_i<c_j\}.
    \]
    If $U\neq\emptyset$ and $\left|[\max U]\backslash\{c_1,c_2,\ldots,c_t\}\right|\geq1$ or $V\neq\emptyset$ and $\left|[\max V]\backslash\{c_1,c_2,\ldots,c_t\}\right|\geq2$, then $|S_{n,(c_1,c_2,\ldots,c_t)}(3412,3421)|=0$; otherwise,
\[
|S_{n,(c_1,c_2,\ldots,c_t)}(3412,3421)|=\mathbb{S}_{c_{(j)}-c_{(j-1)}-2}\prod_{i=j+1}^{t+1}\mathbb{S}_{c_{(i)}-c_{(i-1)}-1},
\]
where $c_{(0)}=0$, $c_{(t+1)}=n+1$, $c_{(1)}<c_{(2)}<\cdots<c_{(t)}$ are the order statistics of $\{c_1,c_2,\ldots,c_t\}$, and $j=\min\{i\in[t+1]:c_{(i)}-c_{(i-1)}>1\}$.
\end{theorem}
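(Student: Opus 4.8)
The plan is to follow the template of the proofs of \cref{Theorem:LeadingTerms231} and \cref{Theorem:34123421leading}: first dispose of the degenerate cases by exhibiting a forbidden pattern, and then, in the remaining case, build a set $\mathcal{W}\subseteq S_{n,(c_1,c_2,\ldots,c_t)}$ with a prescribed overlapping-block structure, compute $|\mathcal{W}|$ using \cref{Theorem:34123421}, and prove $\mathcal{W}=S_{n,(c_1,c_2,\ldots,c_t)}(3412,3421)$.

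For the vanishing cases, suppose first that $U\neq\emptyset$ and some $\beta<\max U$ lies outside $\{c_1,c_2,\ldots,c_t\}$. Choose $i_1<i_2<i_3$ with $c_{i_1}=\max U$ and $c_{i_1}c_{i_2}c_{i_3}$ a $231$ pattern, so $c_{i_3}<c_{i_1}<c_{i_2}$. Since $\beta<c_{i_1}$ lies in the suffix, it occurs after $c_{i_3}$ in every $\tau\in S_{n,(c_1,c_2,\ldots,c_t)}$, and then $c_{i_1}c_{i_2}c_{i_3}\beta$ is a $3421$ pattern if $\beta<c_{i_3}$ and a $3412$ pattern if $c_{i_3}<\beta<c_{i_1}$. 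Similarly, if $V\neq\emptyset$ and two values $\beta_1,\beta_2<\max V$ lie outside the prefix, pick $i_1<i_2$ with $c_{i_1}=\max V<c_{i_2}$; then $\beta_1,\beta_2$ occur in the suffix, and $c_{i_1}$, $c_{i_2}$, $\beta_1$, $\beta_2$, read in the order in which they occur, form a $3412$ or a $3421$ pattern. In either situation $|S_{n,(c_1,c_2,\ldots,c_t)}(3412,3421)|=0$, as claimed.

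Now assume neither vanishing condition holds. Since $c_{(i)}-c_{(i-1)}=1$ for $i<j$, the values $1,2,\ldots,j-1$ are exactly $c_{(1)},\ldots,c_{(j-1)}$ and lie in the prefix, while $c_{(j)}\geq j+1$. Writing $B_k=\{c_{(k-1)}+1,\ldots,c_{(k)}-1\}$ (so $B_1,\ldots,B_{j-1}$ are empty), I define $\mathcal{W}$ to consist of those $\tau$ whose suffix $(\tau(t+1),\ldots,\tau(n))$ is an overlapping concatenation of $3412,3421$-avoiding blocks: its first $|B_j|-1$ entries are all but the last entry of a $3412,3421$-avoiding permutation of $B_j$; the leftover entry (a ``baton'') is adjoined to $B_{j+1}$, this set is arranged $3412,3421$-avoidingly so that all but its last entry immediately follow, that last entry is the new baton adjoined to $B_{j+2}$, and so on, the last block absorbing the baton of $B_t$ together with all of $B_{t+1}$. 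This makes precise the remark that, unlike for $231$, consecutive blocks may overlap in a single element, and the bookkeeping parallels the Schr\"{o}der recurrence in \cref{Corollary:RecurrenceSchroder}. By \cref{Theorem:34123421}, the arrangements of $B_j$ contribute $\mathbb{S}_{|B_j|-1}=\mathbb{S}_{c_{(j)}-c_{(j-1)}-2}$ choices, and for $k>j$ the arrangements of $B_k$ together with its incoming baton (a set of size $|B_k|+1$ whose minimum is the baton, so its standardization, and hence the count, is independent of the baton's value) contribute $\mathbb{S}_{|B_k|}=\mathbb{S}_{c_{(k)}-c_{(k-1)}-1}$ choices; multiplying over $k$ gives exactly the stated formula for $|\mathcal{W}|$.

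It remains to prove $\mathcal{W}=S_{n,(c_1,c_2,\ldots,c_t)}(3412,3421)$. For $S_{n,(c_1,c_2,\ldots,c_t)}(3412,3421)\subseteq\mathcal{W}$, the key step is peeling off $B_j$: if some entry among positions $t+1,\ldots,t+|B_j|-1$ of an avoiding $\tau$ exceeded $c_{(j)}-1$, then at least two elements of $B_j$ would occur later, and together with $c_{(j)}$ itself---which lies in the prefix, precedes the whole suffix, and sits strictly between those two small elements and the oversized entry---they would form a $3412$ or $3421$ pattern; this is precisely the argument in the proof of \cref{Theorem:34123421leading} with $r=c_{(j)}$. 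Iterating, with $c_{(j+1)},c_{(j+2)},\ldots$ playing the role of $c_{(j)}$ in turn, forces the overlapping-block structure. For the reverse inclusion one checks that a putative $3412$ or $3421$ pattern $(a,b,c,d)$, with $c,d<a<b$ at increasing positions, cannot occur: if $a,b$ both lie in the prefix, a short case analysis on the locations of $c,d$ contradicts \cref{Convention:AvoidanceForLeadingTerms}, the bound $|[\max U]\setminus\{c_1,\ldots,c_t\}|=0$, or the bound $|[\max V]\setminus\{c_1,\ldots,c_t\}|\leq 1$; if $a$ lies in the prefix and $b$ in the suffix, then $c,d$ are two elements of $B_j\cup\cdots\cup B_p$ (where $a=c_{(p)}$) occurring after $b\in B_{p+1}\cup\cdots\cup B_{t+1}$, whereas the overlapping structure lets at most one of them (the lone baton) cross into the territory of $b$'s block; and if $a$ lies in the suffix, then either all four points lie in a single $3412,3421$-avoiding extended block, or again two points would have to cross a block boundary---both impossible. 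I expect the main obstacle to be exactly this last step: stating the overlapping-block structure precisely enough, and keeping tight control of the single element that may leak across each block boundary, so that both inclusions go through without a proliferation of cases.
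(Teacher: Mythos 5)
Your proposal is correct and follows essentially the same route as the paper's proof: identical forbidden-pattern arguments for the two vanishing cases, the same overlapping-block set (your ``baton'' is exactly the paper's $x_i$, the last-positioned element of the earlier blocks $A_j\cup\cdots\cup A_{i-1}$), the same Schr\"{o}der-number count $\mathbb{S}_{c_{(j)}-c_{(j-1)}-2}\prod_{i>j}\mathbb{S}_{c_{(i)}-c_{(i-1)}-1}$ via \cref{Theorem:34123421}, and the same two inclusions with matching case analyses. The only differences are cosmetic—you describe the block structure generatively and obtain the forward inclusion by iterating the positional argument of \cref{Theorem:34123421leading}, whereas the paper states the structure as properties of $\tau$ and verifies them by a direct four-point contradiction.
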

\begin{proof}
    For all $k\in[t+1]$, let $A_k=\{c_{(k-1)}+1,c_{(k-1)}+2,\ldots,c_{(k)}-1\}$. We note that it is possible that $A_k=\emptyset$ for some $k$. Also notice that
    \[
[n]\backslash\{c_1,c_2,\ldots,c_t\}=\bigcup_{k=1}^{t+1}A_k.
\]

We first suppose that $U\neq\emptyset$ and $\left|[\max U]\backslash\{c_1,c_2,\ldots,c_t\}\right|\geq1$. Let $x,y,z\in[t]$ such that $c_x=\max U$, $x<y<z$, and $c_xc_yc_z$ is a $231$ pattern. Since $\left|[c_x]\backslash\{c_1,c_2,\ldots,c_t\}\right|\geq1$, there exists $\alpha\in [n]\backslash\{c_1,c_2,\ldots,c_t\}$ such that $\alpha<c_x$. If $\alpha<c_z$, then $c_xc_yc_z\alpha$ is a $3421$ pattern; and if $\alpha>c_z$, then $c_xc_yc_z\alpha$ is a $3412$ pattern. Hence $|S_{n,(c_1,c_2,\ldots,c_t)}(3412,3421)|=0$.

Next, we suppose that $V\neq\emptyset$ and $\left|[\max V]\backslash\{c_1,c_2,\ldots,c_t\}\right|\geq2$. Let $x,y\in[t]$ such that $c_x=\max V$, $x<y$, and $c_x<c_y$. Since $\left|[c_x]\backslash\{c_1,c_2,\ldots,c_t\}\right|\geq2$, there exist $\alpha,\beta\in[n]\backslash\{c_1,c_2,\ldots,c_t\}$ such that $\alpha<\beta<c_x<c_y$. If $\alpha\in\mathcal{A}_\tau(\beta)$, then $c_xc_y\alpha\beta$ is a $3412$ pattern; and if $\alpha\in\mathcal{D}_\tau(\beta)$, then $c_xc_y\beta\alpha$ is a $3421$ pattern. Hence $|S_{n,(c_1,c_2,\ldots,c_t)}(3412,3421)|=0$.

Now suppose otherwise. Let $j=\min\{i\in[t+1]:c_{(i)}-c_{(i-1)}>1\}$. In other words, $j\in[t+1]$ is the smallest index such that $A_j\neq\emptyset$. For all $\tau\in S_{n,(c_1,c_2,\ldots,c_t)}$, let $\tau_j$ be the subpermutation of $\tau$ on $A_j$, and for all $i\in\{j+1,j+2,\ldots,t+1\}$, let $x_i$ be the last term of the subpmermutation of $\tau$ on $A_j\cup A_{j+1}\cup\cdots \cup A_{i-1}$ and $\tau_i$ be the subpermutation of $\tau$ on $A_i\cup\{x_i\}$.
    
    Let $\mathcal{R}'$ be the subset of $S_{n,(c_1,c_2,\ldots,c_t)}$ such that every $\tau\in\mathcal{R}'$ satisfies the following:
    \begin{itemize}
        \item[(i)] for all $i\in\{j+1,j+2,\ldots,t+1\}$, $y\in A_i$, and $z\in (A_j\cup A_{j+1}\cup\cdots \cup A_{i-1})\backslash\{x_i\}$, we have $z\in\mathcal{A}_\tau(y)$;
        \item[(ii)] and for all $i\in\{j,j+1,\ldots,t+1\}$, $\tau_i$ avoids both $3412$ and $3421$.
    \end{itemize}
    
    Let $\tau\in\mathcal{R}'$. By \cref{Theorem:34123421}, since $|A_j|=c_{(j)}-c_{(j-1)}-1$, there are $\mathbb{S}_{c_{(j)}-c_{(j-1)}-2}$ possibilities for $\tau_j$. Now let $i\in\{j+1,j+2,\ldots,t+1\}$. Inductively, we count the possibilities for $\tau_i$ when $\tau_j,\tau_{j+1},\ldots,\tau_{i-1}$ are fixed. In this case, since $|A_i\cup\{x_i\}|=c_{(i)}-c_{(i-1)}$, by \cref{Theorem:34123421}, there are $\mathbb{S}_{c_{(i)}-c_{(i-1)}-1}$ possibilities for $\tau_i$. Hence, we have $|\mathcal{R}'|=\mathbb{S}_{c_{(j)}-c_{(j-1)}-2}\prod_{i=j+1}^{t+1}\mathbb{S}_{c_{(i)}-c_{(i-1)}-1}$.

    It remains to show that $\mathcal{R}'=S_{n,(c_1,c_2,\ldots,c_t)}(3412,3421)$. Let $\tau\in S_{n,(c_1,c_2,\ldots,c_t)}(3412,3421)$. Then Property (ii) is obviously satisfied. Now we show that $\tau$ satisfies Property (i). Suppose, by way of contradiction, that $\tau$ does not satisfy Property (i). Then there exist $i\in\{j+1,j+2,\ldots,t+1\}$, $y\in A_i$, and $z\in (A_j\cup A_{j+1}\cup\cdots \cup A_{i-1})\backslash\{x_i\}$ such that $z\in D_\tau(y)$. Since $x_i$ is the last term of the subpermutation of $\tau$ on $A_j\cup A_{j+1}\cup\cdots\cup A_{i-1}$, $c_{(i-1)}yzx_i$ is a subpermutation of $\tau$. Since $y\in A_i$, we have $y>c_{(i-1)}$. Now since $z,x_i\in A_j\cup A_{j+1}\cup\cdots \cup A_{i-1}$, $c_{(i-1)}yzx_i$ is either a $3412$ pattern or a $3421$ pattern. This is a contradiction. Hence $\tau$ satisfies Property (i). It follows that $S_{n,(c_1,c_2,\ldots,c_t)}(3412,3421)\subseteq\mathcal{R}'$. 

    We still need to prove that $\mathcal{R}'\subseteq S_{n,(c_1,c_2,\ldots,c_t)}(3412,3421)$. Let $\tau\in \mathcal{R}'$. Suppose, by way of contradiction, that $abcd$ is a subpermutation of $\tau$ which is either a $3412$ pattern or a $3421$ pattern. Then we have $c<d<a<b$ or $d<c<a<b$. We have five cases:

    Case 1: $a,b,c,d\notin\{c_1,c_2,\ldots,c_t\}$. Then $a\in A_k$ for some $k\in\{j,j+1,\ldots,t+1\}$. Since $c,d<a$, we have $c\in A_{i_1}$ and $d\in A_{i_2}$ for some $i_1,i_2\leq k$. Since $b>a$, if $b\notin A_k$, then $b\in A_{i_3}$ with $i_3>k$. Then Property (i) is violated because $c,d\in A_j\cup A_{j+1}\cup\cdots\cup A_{k}$. So we must have $b\in A_k$. By Property (i), at most one of $c$ and $d$ is in $A_j\cup A_{j+1}\cup\cdots\cup A_{k-1}$. In addition, if $c$ or $d$ is in $A_j\cup A_{j+1}\cup\cdots\cup A_{k-1}$, then it must be the last term of $\tau_{k-1}$. So $abcd$ is a subpermutation on $A_k\cup\{x_k\}$. This contradicts Property (ii).
    
    Case 2: $a\in\{c_1,c_2,\ldots,c_t\}$ but $b,c,d\notin\{c_1,c_2,\ldots,c_t\}$. Then $a=c_k$ for some $k\in[t]$. Since $b>a$, we have $b\in A_{i}$ for some $i>k$. Since $c,d<a$, we have $c,d\in A_j\cup A_{j+1}\cup\cdots \cup A_{k}$. Since $c,d\in D_\tau(b)$, this violates Property (i). Hence we have a contradiction.

    Case 3: $a,b\in\{c_1,c_2,\ldots,c_t\}$ but $c,d\notin\{c_1,c_2,\ldots,c_t\}$. Since $a<b$ and $c,d<a$, $V\neq\emptyset$ and $\left|[\max V]\backslash\{c_1,c_2,\ldots,c_t\}\right|\geq2$ which is a contradiction.

    Case 4: $a,b,c\in\{c_1,c_2,\ldots,c_t\}$ but $d\notin\{c_1,c_2,\ldots,c_t\}$. Then $abc$ is a $231$ pattern. Since $d<a$, $U\neq\emptyset$ and $\left|[\max U]\backslash\{c_1,c_2,\ldots,c_t\}\right|\geq1$ which is a contradiction.

    Case 5: $a,b,c,d\in\{c_1,c_2,\ldots,c_t\}$. This contradicts our convention that $(c_1,c_2,\ldots,c_t)$ avoids both $3412$ and $3421$.

    Hence, $\tau$ avoids both $3412$ and $3421$. It follows that $\mathcal{R}'\subseteq S_{n,(c_1,c_2,\ldots,c_t)}(3412,3421)$. This completes our proof.
\end{proof}

\section{$r$-Wilf-equivalence classes}\label{Section:LeadingWilf}
In this section, we classify $r$-Wilf-equivalence classes for patterns of length three. Recall that for a fixed $r\in\mathbb{N}$, two patterns $\sigma$ and $\sigma'$ are 
called {\em $r$-Wilf equivalent} if $|S_{n,r}(\sigma)|=|S_{n,r}(\sigma')|$ for all 
$n\geq r$.

We start with some elementary results summarized in \cref{Table:SingleClassical}.
\begin{table}[H]
\centering
\begin{tabular}{|c||c|c|c|c|c|c|}
\hline
$r$&$|S_{n,r}(123)|$&$|S_{n,r}(321)|$&$|S_{n,r}(132)|$&$|S_{n,r}(312)|$&$|S_{n,r}(213)|$&$|S_{n,r}(231)|$\\
\hline\hline
$n$&$C_{n-1}$&$1$&$C_{n-1}$&$1$&$C_{n-1}$&$C_{n-1}$\\
\hline
$n-1$&$C_{n-1}$&$n-1$&$C_{n-1}$&$n-1$&$C_{n-2}$&$C_{n-2}$\\\hline
$2$&$n-1$&$C_{n-1}$&$n-1$&$C_{n-1}$&$C_{n-2}$&$C_{n-2}$\\\hline
$1$&$1$&$C_{n-1}$&$1$&$C_{n-1}$&$C_{n-1}$&$C_{n-1}$\\\hline
\end{tabular}
\caption{Single Patterns of Length $3$ for $n\geq2$.}\label{Table:SingleClassical}
\end{table}

It is easy to check the correctness of the expressions in \cref{Table:SingleClassical}. 
As an example, we sketch the proof of the fact 
that $|S_{n,n-1}(123)|=C_{n-1}$ for all $n\geq2$. For any $i,j\in\{2,3,\ldots,n\}$ with $i<j$, either $\tau(i)<n-1$ or $\tau(j)<n-1$. It follows that $(n-1,\tau(i),\tau(j))$ 
will never form a $123$ pattern for any $i,j\in\{2,3,\ldots,n\}$ with $i<j$. Hence, $\tau\in S_{n,n-1}$ avoids $123$ if and only if $(\tau(2),\tau(3),\ldots,\tau(n))$ avoids $123$. Therefore, by \cref{Theorem:ClassicalSingle3}, we have $|S_{n,n-1}(123)|=|S_{n-1}(123)|=C_{n-1}$.

Next we classify 
the $r$-Wilf-equivalence classes for patterns of length three
for all $r\in\mathbb{N}$.
\begin{theorem}\label{Theorem:rWilf3}
There are two $1$-Wilf-equivalence classes for patterns of length three: $123\stackrel{1}{\sim} 132$ and $321\stackrel{1}{\sim}312\stackrel{1}{\sim}213\stackrel{1}{\sim}231$. 
For $r\geq2$, there are three $r$-Wilf-equivalence classes for patterns of length three: $213\rsim231$, $123\rsim132$, and $321\rsim312$. 
\end{theorem}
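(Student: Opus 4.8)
The plan is to compute, for each of the six length-three patterns $\sigma$ and each fixed $r$, the exact value of $|S_{n,r}(\sigma)|$ as a function of $n\ge r$, and then read off which patterns produce the same sequence. Three of the six are already pinned down: \cref{Theorem:ClassicalLeading123AND132} gives $|S_{n,r}(123)|=|S_{n,r}(132)|=b_{n,r}$, and \cref{Theorem:LeadingTerms231} with $t=1$ gives $|S_{n,r}(231)|=C_{r-1}C_{n-r}$. For the remaining three I would apply \cref{lemma:complements} with $t=1$ together with $231^c=213$, $123^c=321$, and $132^c=312$, obtaining
\[
|S_{n,r}(213)|=|S_{n,n+1-r}(231)|=C_{r-1}C_{n-r},\qquad |S_{n,r}(321)|=|S_{n,r}(312)|=b_{n,n+1-r}.
\]
Thus the six sequences collapse to three closed forms: $b_{n,r}$ (for $123$ and $132$), $C_{r-1}C_{n-r}$ (for $213$ and $231$), and $b_{n,n+1-r}$ (for $321$ and $312$). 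In particular $123\rsim132$, $213\rsim231$, and $321\rsim312$ for every $r\ge1$, so there are at most three $r$-Wilf classes; the remaining steps settle exactly how the three representative sequences coincide.

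For $r=1$ I would observe that $b_{n,1}=1$, that $C_{r-1}C_{n-r}=C_0C_{n-1}=C_{n-1}$, and that $b_{n,n}=C_{n-1}$ (this identity $b_{m,m}=C_{m-1}$ follows by expanding both sides). Hence for $r=1$ the four patterns $213,231,312,321$ all have count $C_{n-1}$, while $123,132$ have count $1$; since $C_{n-1}>1$ for $n\ge3$, there are exactly two $1$-Wilf classes, $\{123,132\}$ and $\{213,231,312,321\}$.

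For $r\ge2$ the task reduces to showing the three sequences $b_{n,r}$, $C_{r-1}C_{n-r}$, and $b_{n,n+1-r}$ are pairwise unequal on $\{n:n\ge r\}$, a short finite check using the elementary identities $b_{m,1}=1$, $b_{m,m}=C_{m-1}$, and $b_{m,m-1}=C_{m-1}$. For $r\ge3$: evaluating at $n=r$ gives the three values $C_{r-1}$, $C_{r-1}$, and $b_{r,1}=1$, and since $C_{r-1}\ge2$ this separates $\{321,312\}$ from the other two classes; evaluating at $n=r+1$ gives $b_{r+1,r}=C_r$ and $C_{r-1}C_1=C_{r-1}$ for the first two classes, and $C_{r-1}<C_r$ separates $\{123,132\}$ from $\{213,231\}$. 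The case $r=2$ must be handled separately, since at $n=r=2$ all three sequences equal $C_1=1$; here a single evaluation at $n=4$ works, giving $b_{4,2}=3$, $C_1C_2=2$, and $b_{4,3}=C_3=5$, which are pairwise distinct. This gives exactly three $r$-Wilf classes for $r\ge2$.

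The argument is mostly bookkeeping, and the step that needs the most care is the use of \cref{lemma:complements}: complementation sends the leading term $r$ to $n+1-r$, so it relates $|S_{n,r}(\sigma)|$ to $|S_{n,n+1-r}(\sigma^c)|$, not to $|S_{n,r}(\sigma^c)|$. This is precisely what makes the $\{321,312\}$ class land on the shifted ballot numbers $b_{n,n+1-r}$, and it also explains why the three representative sequences coincide at sporadic values of $n$ (for instance $b_{n,r}=b_{n,n+1-r}$ whenever $n=2r-1$, and all three degenerate to $C_1=1$ at $n=r=2$). Consequently the evaluation points in the $r\ge2$ distinctness argument have to be chosen to dodge these coincidences, which is why $r=2$ is treated by the single point $n=4$ rather than by the generic point $n=r$.
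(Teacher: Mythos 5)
Your proposal is correct and follows essentially the same strategy as the paper: the within-class equivalences come from \cref{lemma:complements} (together with the leading-term-preserving count for $123$ versus $132$), and the classes are separated by evaluating the counting sequences at $n=r$, $n=r+1$, and (for $r=2$) $n=4$, which are the same evaluation points the paper reads off from its table of boundary values. The only cosmetic difference is that you derive uniform closed forms $b_{n,r}$, $C_{r-1}C_{n-r}$, and $b_{n,n+1-r}$ from \cref{Theorem:ClassicalLeading123AND132}, \cref{Theorem:LeadingTerms231}, and \cref{lemma:complements}, whereas the paper tabulates the special values $r\in\{1,2,n-1,n\}$ directly; your identities $b_{m,m}=b_{m,m-1}=C_{m-1}$ and the computed values all check out.
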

\begin{proof}
The fact that there are two $1$-Wilf-equivalence classes for patterns of length three 
follows from the last row of \cref{Table:SingleClassical}.

Let $r\geq2$. By \cref{lemma:complements}, we have $213\rsim231$, $123\rsim132$, and $321\rsim312$. We need to show that there exist $n_1,n_2,n_3\geq r$ such that $|S_{n_1,r}(213)|\neq|S_{n_1,r}(123)|$, $|S_{n_2,r}(213)|\neq|S_{n_2,r}(321)|$, and $|S_{n_3,r}(123)|\neq|S_{n_3,r}(321)|$. There
are three cases to consider. 

Case 1: $r=2$. Set $n_1=n_2=n_3=4$. By \cref{first_table,Table:SingleClassical}, we have $|S_{4,2}(123)|=4-1=3$, $|S_{4,2}(321)|=C_3=5$, and $|S_{4,2}(213)|=C_2=2$. Hence we have the desired result.

Case 2: $r=3$. Set $n_1=n_2=n_3=4$. By \cref{first_table,Table:SingleClassical}, we have $|S_{4,3}(123)|=C_4=5$, $|S_{4,3}(321)|=4-1=3$, and $|S_{4,3}(213)|=C_2=2$. Hence we have the desired result.

Case 3: $r\geq3$. Set $n_1=n_2=n_3=r+1$. By \cref{Table:SingleClassical}, we have 
$|S_{r+1,r}(123)|=C_r$, $|S_{r+1,r}(321)|=r$, and $|S_{r+1,r}(213)|=C_{r-1}$. 
By \cref{first_table}, we have $C_r>C_{r-1}>r$ and the theorem follows.
\end{proof}

In addition to the classical patterns we have studied so far in this paper, many papers studied consecutive patterns \cite{ElizaldeNoy2003}, bivincular patterns \cite{BCDK2010}, and mesh patterns \cite{BrandenClaesson2011,HJSVU2015}. Here we briefly describe, for all $r\geq5$, the $r$-Wilf equivalence classes for vincular patterns of length three studied by Babson and Steingrímsson \cite{BabsonSteingrimsson2000} and later, Claesson \cite{Claesson2001}. 

In vincular patterns \cite[Section 2]{HJSVU2015}, some consecutive elements in a permutation pattern are required to be adjacent. We use overlines to indicate that the elements under the overlines are required to be adjacent. There are 
twelve vincular patterns of length three where one requires exactly two numbers to be adjacent. For example, a permutation $\tau\in S_n$ contains the pattern $1\overline{32}$ if there exist indices $i<j$ such that $\tau(i)\tau(j)\tau(j+1)$ is a $132$ pattern. Other vincular patterns are defined similarly.

\begin{example}
    In the permutation $\tau=13542\in S_5$, $\tau(2)\tau(3)\tau(5)=352$ is a $\overline{23}1$ pattern and $\tau(1)\tau(4)\tau(5)=142$ is a $1\overline{32}$ pattern, but $\tau$ avoids the pattern $\overline{21}3$.
\end{example}
Claesson \cite[Propositions 1-3 and Lemma 2]{Claesson2001} proved that there are two Wilf-equivalence classes for the 
twelve vincular patterns. They are counted either by the Catalan numbers or by the Bell numbers:

\begin{theorem}\label{Theorem:GeneralizedSingle3}
For all $n\geq1$,
\[
\begin{split}
|S_n(1\overline{23})|=&|S_n(3\overline{21})|=|S_n(\overline{12}3)|=|S_n(\overline{32}1)|=|S_n(1\overline{32})|\\=&|S_n(3\overline{12})|=|S_n(\overline{21}3)|=|S_n(\overline{23}1)|=B_n,
\end{split}
\]
and
\[
|S_n(2\overline{13})|=|S_n(2\overline{31})|=|S_n(\overline{13}2)|=|S_n(\overline{31}2)|=C_n,
\]
where $B_n$ is the $n$th Bell number and $C_n$ is the $n$th Catalan number.
\end{theorem}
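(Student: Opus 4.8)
The plan is to prove this as a consequence of the standard symmetry reduction together with two genuinely different arguments for the Catalan and Bell classes. I would first record that the complement $\tau\mapsto\tau^{c}$ (\cref{Definition:Complements}) and the reversal $\tau\mapsto\tau^{r}$, where $\tau^{r}(i)=\tau(n+1-i)$, are avoidance-preserving bijections of $S_{n}$ for vincular patterns, with the caveat that reversal reverses the pattern \emph{and} interchanges the two possible positions of the vinculum (so, e.g., $1\overline{23}$ becomes $\overline{32}1$). The group generated by these two involutions partitions the twelve vincular patterns of length three into three orbits of size four: $\{1\overline{23},3\overline{21},\overline{32}1,\overline{12}3\}$, $\{1\overline{32},3\overline{12},\overline{23}1,\overline{21}3\}$, and $\{2\overline{13},2\overline{31},\overline{13}2,\overline{31}2\}$. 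Since $|S_{n}(\sigma)|$ is constant on each orbit, it suffices to evaluate one representative of each; I would use $\overline{12}3$, $\overline{23}1$, and $2\overline{31}$.

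For the third (Catalan) orbit I would prove the sharper statement $S_{n}(2\overline{31})=S_{n}(231)$, whence $|S_{n}(2\overline{31})|=C_{n}$ by \cref{Theorem:ClassicalSingle3}. The inclusion $S_{n}(2\overline{31})\supseteq S_{n}(231)$ is immediate. For the reverse inclusion, suppose $\tau$ contains the classical pattern $231$, and among all triples $i<j<k$ with $\tau(k)<\tau(i)<\tau(j)$ choose one minimizing $k-j$. If $k-j\ge 2$, then looking at $\tau(j+1)$: if $\tau(j+1)>\tau(i)$ then $(i,j+1,k)$ is again a $231$-occurrence with strictly smaller gap, while if $\tau(j+1)<\tau(i)$ then $(i,j,j+1)$ is a $231$-occurrence with strictly smaller gap; either way a contradiction. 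Hence $k=j+1$, i.e.\ $\tau$ contains $2\overline{31}$.

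For the first two (Bell) orbits the key is a recursive structure. Consider $\overline{23}1$ and let $p$ be the position of the entry $1$ in $\tau$. I would show that $\tau$ avoids $\overline{23}1$ if and only if $\tau(1)>\tau(2)>\cdots>\tau(p-1)$ and the suffix $(\tau(p+1),\ldots,\tau(n))$, after standardization, avoids $\overline{23}1$. Indeed, any ascent $\tau(i)<\tau(i+1)$ with $i+1<p$ would make $(i,i+1,p)$ an occurrence of $\overline{23}1$, forcing the prefix to be decreasing; conversely, since the prefix is decreasing and $\tau(p)=1$, no occurrence of $\overline{23}1$ can use a position $\le p$, so the only remaining constraint lives in the suffix (using that standardization preserves vincular avoidance, cf.\ \cref{Lemma:StandardizationPreservesPatternAvoidance}). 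Writing $f(n)=|S_{n}(\overline{23}1)|$ and summing over the position $p$ of $1$ — the $p-1$ preceding entries being an arbitrary $(p-1)$-subset of $\{2,\ldots,n\}$ arranged decreasingly, and the $n-p$ following entries forming an arbitrary $\overline{23}1$-avoider — gives $f(n)=\sum_{p=1}^{n}\binom{n-1}{p-1}f(n-p)=\sum_{j=0}^{n-1}\binom{n-1}{j}f(j)$, which is precisely the Bell recurrence recalled in the proof of \cref{Lemma:Bell}; with $f(0)=f(1)=1$ this yields $|S_{n}(\overline{23}1)|=B_{n}$. The orbit of $\overline{12}3$ is handled identically, using the maximal entry $n$ — its preceding entries form a decreasing run and the suffix recurses — in place of the entry $1$.

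The main obstacle is bookkeeping rather than conceptual: one must track correctly how reversal acts on the vinculum so that the three orbits come out as stated, and one must verify carefully in the Bell cases that no occurrence of the forbidden pattern can straddle the distinguished entry ($1$ or $n$). Once these points are settled, the Catalan case is a one-line appeal to \cref{Theorem:ClassicalSingle3} and the Bell cases reduce to the classical Bell recurrence.
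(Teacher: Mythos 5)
Your proposal is correct. Note, however, that the paper does not prove this theorem at all: it is quoted verbatim from Claesson (Propositions 1--3 and Lemma 2 of that paper), so you are supplying a proof where the authors supply only a citation. Your argument is sound on all three counts. The orbit computation under complement and reversal is right (reversal does swap the two vinculum positions, and the three orbits come out exactly as you list them), which correctly reduces the twelve patterns to the three representatives $\overline{12}3$, $\overline{23}1$, and $2\overline{31}$. The gap-minimization argument showing $S_n(2\overline{31})=S_n(231)$ is exactly Claesson's Lemma 2 (the paper invokes the $2\overline{13}$ versus $213$ form of it in \cref{Proposition:rWilfClassesWithMoreThan1}). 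For the Bell orbits, your structural characterization is airtight: the case analysis around the position $p$ of the entry $1$ (respectively $n$) covers the straddling occurrences, and the resulting recursion $f(n)=\sum_{j=0}^{n-1}\binom{n-1}{j}f(j)$ is precisely the Bell recurrence recalled in the proof of \cref{Lemma:Bell}; the appeal to standardization is legitimate because the suffix is a contiguous factor, so vinculum adjacency is preserved. The one substantive difference from Claesson's original treatment is that he proves the Bell counts via explicit bijections between $S_n(1\overline{23})$ (and $S_n(1\overline{32})$) and set partitions of $[n]$, whereas you use a recurrence. For the theorem as stated this is a perfectly good trade: your route is shorter and self-contained. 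But be aware that the paper later leans on the \emph{bijective} form of Claesson's result --- \cref{Proposition:rWilfClassesWithMoreThan1} needs that his bijections preserve leading terms --- so your recurrence-based count, while it proves this theorem, would not by itself support that downstream refinement.
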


We first adapt some results in Claesson \cite{Claesson2001} to show $r$-Wilf equivalence for several vincular patterns.

\begin{proposition}\label{Proposition:rWilfClassesWithMoreThan1}
For all $r\in\mathbb{N}$, $2\overline{13}\rsim2\overline{31}$, $1\overline{23}\rsim1\overline{32}$, and $3\overline{21}\rsim3\overline{12}$.
\end{proposition}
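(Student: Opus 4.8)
The plan is to prove each of the three $r$-Wilf equivalences by exhibiting, for every $n$, a bijection between the two avoidance classes that fixes the first entry $\tau(1)$; restricting such a bijection to the permutations with $\tau(1)=r$ then gives $|S_{n,r}(\sigma)|=|S_{n,r}(\sigma')|$ for all $n\geq r$. I would handle the three pairs separately, obtaining the third by complementation from the second.

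For the pair $\{2\overline{13},2\overline{31}\}$ I would first record the observation (in the spirit of Claesson) that these vincular classes coincide with classical ones \emph{as sets}: $S_n(2\overline{13})=S_n(213)$ and $S_n(2\overline{31})=S_n(231)$. One inclusion is immediate, since an occurrence of $2\overline{13}$ is in particular an occurrence of $213$. For the reverse inclusion, given $a<b<c$ with $(\tau(a),\tau(b),\tau(c))$ order-isomorphic to $(2,1,3)$, set $j=\max\{m : b\leq m\leq c,\ \tau(m)<\tau(a)\}$; then $b\leq j<c$, and one checks $\tau(j)<\tau(a)<\tau(j+1)$ with $a<j$, so $(\tau(a),\tau(j),\tau(j+1))$ is an \emph{adjacent} occurrence of $2\overline{13}$. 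The argument for $231$ versus $2\overline{31}$ is symmetric (replace $<$ by $>$ in the definition of $j$). Since these set equalities respect the first entry, $S_{n,r}(2\overline{13})=S_{n,r}(213)$ and $S_{n,r}(2\overline{31})=S_{n,r}(231)$, and the claim $2\overline{13}\rsim2\overline{31}$ follows from $213\rsim231$, which is part of \cref{Theorem:rWilf3}.

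For the pair $\{1\overline{23},1\overline{32}\}$ I would use a structural description via left-to-right minima. A permutation $\tau$ avoids $1\overline{23}$ if and only if every ascent bottom of $\tau$ is a left-to-right minimum; writing $p_1=1<p_2<\cdots<p_\ell$ for the positions of the left-to-right minima and $B_k=(\tau(p_k),\tau(p_k+1),\ldots,\tau(p_{k+1}-1))$ (with $p_{\ell+1}=n+1$), this says exactly that in each block $B_k$ the leader $\tau(p_k)$ is the smallest entry and the remaining entries are strictly decreasing. Dually, with the same notation, $\tau$ avoids $1\overline{32}$ if and only if every descent bottom is a left-to-right minimum, equivalently each block $B_k$ is strictly increasing. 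The map $\Phi$ that keeps each leader $\tau(p_k)$ in place and rearranges the other entries of $B_k$ into increasing order is then a well-defined map $S_n(1\overline{23})\to S_n(1\overline{32})$: one must verify that the left-to-right minima of $\Phi(\tau)$ are again precisely $\tau(p_1),\ldots,\tau(p_\ell)$ at positions $p_1,\ldots,p_\ell$, which holds because the non-leader entries of each block exceed every leader at or before it, hence cannot become new left-to-right minima and cannot cost the old ones their status. Its inverse sorts the non-leader entries of each block into decreasing order. Since $\Phi$ fixes $\tau(1)=\tau(p_1)$, it restricts to a bijection $S_{n,r}(1\overline{23})\to S_{n,r}(1\overline{32})$, giving $1\overline{23}\rsim1\overline{32}$ for all $r$.

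Finally, $3\overline{21}\rsim3\overline{12}$ follows by complementation: the complement of $1\overline{23}$ is $3\overline{21}$ and the complement of $1\overline{32}$ is $3\overline{12}$ (complementation reverses values but preserves positions, hence preserves the adjacency constraint), so the vincular analogue of \cref{lemma:complements} gives $|S_{n,r}(3\overline{21})|=|S_{n,n+1-r}(1\overline{23})|=|S_{n,n+1-r}(1\overline{32})|=|S_{n,r}(3\overline{12})|$ for all $n\geq r$; one could alternatively rerun the previous paragraph with left-to-right maxima in place of minima. The one genuinely delicate point is the second pair: confirming that the two left-to-right-minima characterizations are exact equivalences and that $\Phi$ neither destroys an existing left-to-right minimum nor manufactures a spurious one. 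The first pair, by contrast, reduces cleanly to the classical equivalence $213\rsim231$ already in hand, and the third is a formal consequence of the second.
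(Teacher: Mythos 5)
Your proof is correct, and it diverges from the paper's in an interesting way on the middle pair. For $2\overline{13}\rsim 2\overline{31}$ you do essentially what the paper does -- reduce to the classical patterns $213$ and $231$ and invoke complementation (via $213\rsim 231$) -- except that you supply the proof of the reduction $S_n(2\overline{13})=S_n(213)$ rather than citing Claesson's Lemma~2; your extremal choice of $j$ is exactly the standard argument and is sound. For $3\overline{21}\rsim 3\overline{12}$ you also match the paper, which likewise obtains this pair by complementing the second. The genuine difference is in $1\overline{23}\rsim 1\overline{32}$: the paper cites Claesson's bijections from $S_n(1\overline{23})$ and $S_n(1\overline{32})$ to set partitions of $[n]$ and asserts that both preserve the leading term, whereas you build a direct, self-contained bijection $\Phi$ by characterizing both classes through their left-to-right-minima blocks (leader smallest, tail decreasing versus tail increasing) and sorting each tail. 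This buys independence from the partition bijections and makes the leading-term preservation manifest, since $\Phi$ fixes position~$1$; the paper's route is shorter but leaves the reader to verify the preservation claim inside Claesson's constructions. One small inaccuracy in your justification: a non-leader entry of a block need not exceed \emph{every} earlier leader (e.g.\ in $3142$ the entry $2$ of the second block does not exceed the first leader $3$); what is true, and all you need, is that it exceeds the leader of its \emph{own} block, which precedes it, so it cannot become a left-to-right minimum after sorting, while the blockwise nature of $\Phi$ keeps the value set before each leader position unchanged, so the old minima survive. This is a slip in wording, not a gap in the argument.
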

\begin{proof}
Let $1\leq r\leq n$.
Using a short combinatorial argument, Claesson \cite[Lemma 2]{Claesson2001} showed that for all $n\geq1$, $\tau\in S_n$ avoids $2\overline{13}$ if and only if it avoids $213$. 
Taking complements, for all $n\geq1$, $\tau\in S_n$ avoids $2\overline{31}$ if and only if it avoids $231$. Hence for all $\tau\in S_{n,r}$, $\tau$ avoids $2\overline{13}$ if and only if it avoids $213$ and $\tau$ avoids $2\overline{31}$ if and only if it avoids $231$. Then we have $|S_{n,r}(2\overline{13})|=|S_{n,r}(213)|$ and $|S_{n,r}(2\overline{31})|=|S_{n,r}(231)|$. Now by \cref{lemma:complements}, we have $|S_{n,r}(2\overline{13})|=|S_{n,r}(213)|=|S_{n,r}(231)|=|S_{n,r}(2\overline{31})|$. Therefore, $2\overline{13}\rsim2\overline{31}$.

Claesson \cite[Propositions 2 and 4]{Claesson2001} constructed bijections between $S_n(1\overline{23})$ and the partitions of $[n]$, and then between $S_n(1\overline{32})$ and the partitions of $[n]$. These bijections preserve 
the leading terms of permutations. So for all $1\leq r\leq n$, we have $|S_{n,r}(1\overline{23})|=|S_{n,r}(1\overline{32})|$. Taking the complements, we also have $|S_{n,r}(3\overline{21})|=|S_{n,r}(3\overline{12})|$. Therefore, we have $1\overline{23}\rsim1\overline{32}$ and $3\overline{21}\rsim3\overline{12}$.
\end{proof}


By \cref{Proposition:rWilfClassesWithMoreThan1}, there are at most nine $r$-Wilf-equivalence classes for vincular patterns. \cref{Table:GeneralizedPatterns} lists the results we need to classify $r$-Wilf equivalence classes for all twelve vincular patterns. 

\begin{table}[h]
\centering
\renewcommand{\arraystretch}{1.2}
\begin{tabular}{|c||c|c|c|c|c|}
\hline
&$n=r$&$n=r+1$&$n=r+2$\\\hline\hline
$|S_{n,r}(2\overline{13})|=|S_{n,r}(2\overline{31})|$&$C_{r-1}$&$C_{r-1}$&\\\hline
$|S_{n,r}(\overline{13}2)|$&$C_{r-1}$&$C_r$&\\\hline
$|S_{n,r}(3\overline{21})|=|S_{n,r}(3\overline{12})|$&$1$&$2^{r-1}$&\\\hline
$|S_{n,r}(\overline{31}2)|$&$1$&$r$&\\\hline
$|S_{n,r}(1\overline{23})|=|S_{n,r}(1\overline{32})|$&$B_{r-1}$&$B_r$&$B_{r+1}-B_{r-1}$\\\hline
$|S_{n,r}(\overline{12}3)|$&$B_{r-1}$&$B_r$&$B_{r+1}-B_r$\\\hline
$|S_{n,r}(\overline{21}3)|$&$B_{r-1}$&$B_{r-1}$&\\\hline
$|S_{n,r}(\overline{23}1)|$&$B_{r-1}$&$B_r-B_{r-1}$&\\\hline
$|S_{n,r}(\overline{32}1)|$&$B_{r-2}$&&\\\hline
\end{tabular}
\caption{Avoiding Vincular Patterns by Leading Terms for $r\geq3$. (We leave some entries in the table blank and only include results that are needed to classify $r$-Wilf equivalence classes for the twelve vincular patterns.)}\label{Table:GeneralizedPatterns}
\end{table}

Most of the expressions in \cref{Table:GeneralizedPatterns} can be obtained by straightforward calculation 
using \cref{Theorem:GeneralizedSingle3} and \cref{Table:SingleClassical}. We will 
only sketch the proofs of a few of them.

\begin{lemma}
For all $r\geq3$, \[|S_{r,r}(\overline{32}1)|=B_{r-2}.\]
\end{lemma}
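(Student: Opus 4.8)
The claim is that $|S_{r,r}(\overline{32}1)| = B_{r-2}$ for all $r \geq 3$. The plan is to exploit the fact that the leading term is $r$, the maximum, which constrains how the vincular pattern $\overline{32}1$ can occur, and then reduce to the known count $|S_{r-1}(\overline{32}1)| = B_{r-1}$ from \cref{Theorem:GeneralizedSingle3}, or directly to a count of partitions.

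First I would recall that $\tau \in S_{r,r}(\overline{32}1)$ means $\tau(1) = r$ and $\tau$ contains no indices $i < j$ with $j+1 \leq r$ and $\tau(i)\tau(j)\tau(j+1)$ a $321$ pattern (i.e. $\tau(i) > \tau(j) > \tau(j+1)$ with the last two adjacent). The key observation is to analyze the role of the leading entry $r$: since $r = \tau(1)$ is the largest value, $r$ can serve as the ``$3$'' of a $\overline{32}1$ occurrence with any adjacent descending pair $\tau(j) > \tau(j+1)$ occurring later. Therefore $\tau \in S_{r,r}(\overline{32}1)$ forces $(\tau(2),\tau(3),\ldots,\tau(r))$ to have \emph{no} adjacent descent, i.e. $\tau(2) < \tau(3) < \cdots < \tau(r)$. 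But that would mean $\tau$ is the unique increasing-tail permutation, giving a count of $1$, not $B_{r-2}$ --- so this naive reading is wrong, and I must re-examine: the ``$1$'' of $\overline{32}1$ need not be adjacent to the ``$32$'', but it also cannot be the entry $r$ (since $r$ is the max), so $r$ can only play the role of the initial ``$3$''. The correct statement is then: $\tau$ avoids $\overline{32}1$ with $\tau(1) = r$ iff the suffix $(\tau(2),\ldots,\tau(r))$ both avoids $\overline{32}1$ \emph{and} contains no adjacent descent $\tau(j) > \tau(j+1)$ (for $2 \le j \le r-1$) that has \emph{any} later entry smaller than $\tau(j+1)$ --- but since $r$ is in front, the condition ``$r > \tau(j) > \tau(j+1)$'' needs a third entry after position $j+1$; here $r$ is before, so actually $r\,\tau(j)\,\tau(j+1)$ \emph{is} a $\overline{32}1$ occurrence whenever $\tau(j) > \tau(j+1)$ are adjacent. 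So the suffix must be strictly increasing and the count is $1$. Since this contradicts $B_{r-2}$, the resolution must be that in $\overline{32}1$ the overline is on the ``$21$'' not the ``$32$'': re-reading the paper's convention, $\overline{32}1$ has the first two entries adjacent. Thus $r\,\tau(j)\,\tau(j+1)$ with $\tau(j) > \tau(j+1)$ \emph{would} be such a pattern only if $r$ and $\tau(j)$ were adjacent, i.e. $j = 2$. So the real constraint from $\tau(1)=r$ is only $\tau(2) < \tau(3)$ is \emph{not} required; rather $r\,\tau(2)\,x$ is forbidden when $r > \tau(2) > x$ with $x$ appearing after position $2$ --- i.e. $\tau(2)$ must be smaller than everything after it, forcing $\tau(2) = 1$.

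Having pinned down that $\tau(1) = r$ and $\tau(2) = 1$ (the latter because $r$ and $\tau(2)$ are forced adjacent for $r$ to begin a $\overline{32}1$, and then $\tau(2)$ must be a left-to-right minimum of the whole suffix), I would then argue that the remaining entries $(\tau(3),\ldots,\tau(r))$ form a permutation of $\{2,3,\ldots,r-1\}$ that avoids $\overline{32}1$, with no further interaction: the entry $1$ at position $2$ cannot be the ``$1$'' of a forbidden pattern lying to its right, and $r$ at position $1$ is no longer adjacent to anything relevant. After standardizing $(\tau(3),\ldots,\tau(r))$ to a permutation in $S_{r-2}$, avoidance of $\overline{32}1$ is preserved (an adjacency-based pattern condition is clearly invariant under standardization). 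Conversely any $\overline{32}1$-avoiding permutation of $\{2,\ldots,r-1\}$ extends uniquely. Hence $|S_{r,r}(\overline{32}1)| = |S_{r-2}(\overline{32}1)| = B_{r-2}$ by \cref{Theorem:GeneralizedSingle3}.

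The main obstacle --- and the step I would write most carefully --- is establishing precisely that $\tau(1) = r$ \emph{forces} $\tau(2) = 1$, and that no other structural constraints are imposed on positions $3$ through $r$. This requires a clean case analysis of where an occurrence of $\overline{32}1$ in $\tau$ could use the entry $r$: $r$ can only be the ``$3$'' (being the maximum), and since the overline forces the ``$3$'' and ``$2$'' to be adjacent, any such occurrence is $r\,\tau(2)\,\tau(k)$ with $r > \tau(2) > \tau(k)$ and $k > 2$. So avoidance is equivalent to ``$\tau(2)$ is the minimum of $\{\tau(2),\ldots,\tau(r)\} = \{1,2,\ldots,r-1\}$,'' i.e. $\tau(2)=1$, together with the suffix beyond position $2$ being $\overline{32}1$-avoiding (occurrences not using $r$ live entirely in positions $\geq 2$, and $\tau(2)=1$ cannot be a ``$2$'' or ``$3$'', nor the final ``$1$'' of anything to its right). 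I would double-check the edge case $r=3$: then $\tau = (3,1,2)$ is the only candidate, $B_1 = 1$, consistent.
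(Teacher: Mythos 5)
Your final argument is correct and is essentially the paper's proof: force $\tau(2)=1$ (since $\tau(1)=r$ and $\tau(2)$ are adjacent, any $\tau(2)\neq 1$ yields the occurrence $r\,\tau(2)\,1$), observe that the remaining condition is exactly that $(\tau(3),\ldots,\tau(r))$ avoids $\overline{32}1$, and invoke $|S_{r-2}(\overline{32}1)|=B_{r-2}$. The only improvement needed is editorial: the exploratory back-and-forth about which entries the overline binds should be cut, leaving just the clean case analysis at the end.
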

\begin{proof}
Let $\tau\in S_{r,r}(\overline{32}1)$. If $\tau(2)\neq 1$, then $\tau(1)\tau(2)1$ is a $\overline{32}1$ pattern. So we must have $\tau(2)=1$. At the same time, $(\tau(3),\tau(4),\ldots,\tau(n))$ avoids the pattern $\overline{32}1$. So by \cref{Theorem:GeneralizedSingle3}, we have $|S_{r,r}(\overline{32}1)|\leq|S_{r-2}(\overline{32}1)|=B_{r-2}$.

Now let $\tau\in S_{r,r}$ with $\tau(2)=1$ and $(\tau(3),\tau(4),\ldots,\tau(n))$ avoiding the pattern $\overline{32}1$. Since $r1x$ and $1xy$ are never $\overline{32}1$ patterns, we must have $\tau\in S_{r,r}(\overline{32}1)$. Hence we have $B_{r-2}=|S_{r-2}(\overline{32}1)|\leq |S_{r,r}(\overline{32}1)|$. This completes the proof of the lemma.
\end{proof}

\begin{lemma}
For all $r\geq1$, \[|S_{r+2,r}(1\overline{23})|=B_{r+1}-B_{r-1} \text{ and } |S_{r+2,r}(\overline{12}3)|=B_{r+1}-B_{r}.\]
\end{lemma}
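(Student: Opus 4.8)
\noindent The plan is to peel off the leading term and reduce each count to a known enumeration. For $\tau\in S_{r+2,r}$ put $\Phi(\tau)=s(\tau(2),\tau(3),\ldots,\tau(r+2))\in S_{r+1}$. Since $(\tau(2),\ldots,\tau(r+2))$ runs over all permutations of $[r+2]\setminus\{r\}$ and standardization on a fixed value set is invertible (replace the value $i$ by the $i$th smallest element of $[r+2]\setminus\{r\}$), $\Phi$ is a bijection $S_{r+2,r}\to S_{r+1}$; under it the entries $r+1,r+2$ exceeding the leading term become the two largest entries $r,r+1$, while entries below $r$ are unchanged. I will also use that standardization and its inverse on a fixed value set preserve containment of vincular patterns — immediate, since relative order and position-adjacency are both preserved (cf.\ \cref{Lemma:StandardizationPreservesPatternAvoidance}).

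\noindent For $1\overline{23}$: since $\tau(1)=r$ is in position $1$ it can only serve as the ``$1$'' of an occurrence, and such an occurrence $r=\tau(1)<\tau(b)<\tau(b+1)$ exists exactly when $r+1$ is immediately followed by $r+2$ in $\tau$ (the only entries above $r$, and an ascent forces them into increasing order). Transporting along $\Phi$, $\tau$ avoids $1\overline{23}$ iff $\Phi(\tau)\in S_{r+1}(1\overline{23})$ and $\Phi(\tau)$ has no consecutive block $(r,r+1)$; hence $|S_{r+2,r}(1\overline{23})|=B_{r+1}-N$, where $N=\#\{\pi\in S_{r+1}(1\overline{23}):\pi\text{ has the consecutive block }(r,r+1)\}$ and we used \cref{Theorem:GeneralizedSingle3}. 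To evaluate $N$: if $\pi$ avoids $1\overline{23}$ but has the block $(r,r+1)$ at positions $k,k+1$, then any entry to the left of position $k$ is $<r$ and forms a $1\overline{23}$ occurrence with $r,r+1$; so $k=1$, i.e.\ $\pi=(r,\,r+1,\,\pi(3),\ldots,\pi(r+1))$ with $(\pi(3),\ldots,\pi(r+1))$ a permutation of $[r-1]$, and $\pi$ avoids $1\overline{23}$ iff this suffix does. Thus $\pi\mapsto(\pi(3),\ldots,\pi(r+1))$ is a bijection onto $S_{r-1}(1\overline{23})$, so $N=B_{r-1}$ and $|S_{r+2,r}(1\overline{23})|=B_{r+1}-B_{r-1}$.

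\noindent For $\overline{12}3$: again $\tau(1)=r$ can only serve as the ``$1$'' of an occurrence, and $r=\tau(1)<\tau(2)<\tau(c)$ for some $c\ge3$ holds exactly when $\tau(2)=r+1$ (then $r+2$ necessarily appears later; if $\tau(2)=r+2$ there is no larger entry to play the ``$3$''). Transporting along $\Phi$, $\tau$ avoids $\overline{12}3$ iff $\Phi(\tau)\in S_{r+1}(\overline{12}3)$ and $\Phi(\tau)(1)\ne r$; hence $|S_{r+2,r}(\overline{12}3)|=B_{r+1}-M$, where $M=\#\{\pi\in S_{r+1}(\overline{12}3):\pi(1)=r\}$. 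When $\pi(1)=r$ (the second largest entry of $S_{r+1}$), $\pi(1)$ cannot occur in any $\overline{12}3$ occurrence — only as the ``$1$'', which would force $\pi(2)=r+1$ and then a ``$3$'' exceeding $r+1$ — so deleting $\pi(1)$ and standardizing gives a bijection from $\{\pi\in S_{r+1}(\overline{12}3):\pi(1)=r\}$ onto $S_r(\overline{12}3)$; thus $M=B_r$ and $|S_{r+2,r}(\overline{12}3)|=B_{r+1}-B_r$.

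\noindent The substance of the argument is the bookkeeping: one must confirm that the leading term $r$ can only ever play the ``$1$''-role of each vincular pattern, and then track positions and values carefully through the deletion-and-standardization maps so that every $1\overline{23}$ (resp.\ $\overline{12}3$) occurrence in $S_{r+2}$ corresponds to exactly one of the configurations above. I would finish by checking $r=1,2$ directly, and by noting that the two formulas agree with the $n=r$ and $n=r+1$ entries of \cref{Table:GeneralizedPatterns} and with $|S_{r+2}(1\overline{23})|=|S_{r+2}(\overline{12}3)|=B_{r+1}$.
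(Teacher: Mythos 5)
Your proposal is correct and follows essentially the same route as the paper's proof: both arguments count the $B_{r+1}$ suffixes avoiding the pattern and subtract the bad configurations in which the leading term $r$ initiates an occurrence, identifying these as exactly the permutations with $\tau(2)=r+1,\tau(3)=r+2$ (counted by $B_{r-1}$) for $1\overline{23}$, and $\tau(2)=r+1$ (counted by $B_r$) for $\overline{12}3$. Your explicit standardization bijection $\Phi$ just makes precise what the paper does implicitly when invoking $|S_{r+1}(\cdot)|=B_{r+1}$ for the suffix.
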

\begin{proof}
We first prove that $|S_{r+2,r}(1\overline{23})|=B_{r+1}-B_{r-1}$. Let $\tau\in S_{r+2,r}(1\overline{23})$. Then the subpermutation $(\tau(2),\tau(3),\ldots,\tau(r+2))$ avoids $1\overline{23}$. By \cref{Theorem:GeneralizedSingle3}, there are $|S_{r+1}(1\overline{23})|=B_{r+1}$ ways for $(\tau(2),\tau(3),\ldots,\tau(r+2))$ to avoid $1\overline{23}$. For these permutations on $\{1,2,\ldots,r-1,r+1,r+2\}$, the only way that $r+1$ and $r+2$ are adjacent and the subpermutation on $\{r+1,r+2\}$ is $(r+1,r+2)$ is when $\tau(2)=r+1$ and $\tau(3)=r+2$ because otherwise $(\tau(2),\tau(3),\ldots,\tau(r+2))$ would contain a $1\overline{23}$ pattern. This is the only case that $(r,r+1,r+2)$ is a $1\overline{23}$ pattern. Since $(\tau(4),\tau(5),\ldots,\tau(r+2))$ also need to avoid $1\overline{23}$, by \cref{Theorem:GeneralizedSingle3}, the number of permutations $(\tau(2),\tau(3),\ldots,\tau(r+2))$ avoiding $1\overline{23}$, with $\tau(2)=r+1$ and $\tau(3)=r+2$, is $|S_{r-1}(1\overline{23})|=B_{r-1}$. Here it is easy to check that if $\tau(2)=r+1$, $\tau(3)=r+2$, and $(\tau(4),\tau(5),\ldots,\tau(r+2))$ avoids $1\overline{23}$, then $(\tau(2),\tau(3),\ldots,\tau(r+2))$ avoids $1\overline{23}$ as well. 
Therefore
\[
|S_{r+2,r}(1\overline{23})|=|S_{r+1}(1\overline{23})|-|S_{r-1}(1\overline{23})|=B_{r+1}-B_{r-1}.
\]

Next, we prove that $|S_{r+2,r}(\overline{12}3)|=B_{r+1}-B_r$. Let $\tau\in S_{r+2,r}(\overline{12}3)$. Then $(\tau(2),\tau(3),\ldots,\tau(r+2))$ avoids $\overline{12}3$. By \cref{Theorem:GeneralizedSingle3}, there are $|S_{r+1}(\overline{12}3)|=B_{r+1}$ ways for $(\tau(2),\tau(3),\ldots,\tau(r+2))$ to avoid $\overline{12}3$. For these permutations, the only way that we have a $\overline{12}3$ pattern starting with $r$ is when $\tau(2)=r+1$, then $(\tau(1),\tau(2),r+2)$ is a $\overline{12}3$ pattern. Here, it is easy to see that if $\tau(2)=r+1$, then, for all $2<i<j\leq r+2$, $(\tau(2),\tau(i),\tau(j))$ is never a $\overline{12}3$ pattern. Hence, by \cref{Theorem:GeneralizedSingle3}, the number of permutations $(\tau(2),\tau(3),\ldots,\tau(r+2))$, with $\tau(2)=r+1$, avoiding $\overline{12}3$ is $|S_{r}(\overline{12}3)|=B_r$. Using subtraction, we have
\[
|S_{r+2,r}(\overline{12}3)|=|S_{r+1}(\overline{12}3)|-|S_{r}(\overline{12}3)|=B_{r+1}-B_{r}.\qedhere
\]
\end{proof}
\begin{lemma}
For all $r\geq1$, \[|S_{r+1,r}(3\overline{21})|=2^{r-1}.\]
\end{lemma}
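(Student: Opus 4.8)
The plan is to characterize exactly which $\tau\in S_{r+1,r}$ avoid $3\overline{21}$ and then read off the count. The pivotal observation is that $\tau(1)=r$ is the second largest entry of $[r+1]$, the unique larger entry being $r+1$. Consequently, if $\tau$ has an adjacent descent $\tau(j)>\tau(j+1)$ at some position $j\geq 2$ with $\tau(j)\neq r+1$, then (since $\tau(j)\neq r$ for $j\geq 2$) we have $\tau(j)\leq r-1<r=\tau(1)$, so $\tau(1)\tau(j)\tau(j+1)$ is a $3\overline{21}$ pattern. Conversely, an adjacent descent $\tau(j)>\tau(j+1)$ with $\tau(j)=r+1$ can never be the ``$\overline{21}$'' part of a $3\overline{21}$ pattern because no entry exceeds $r+1$; and a descent at position $j=1$ is harmless as well, since nothing precedes position $1$. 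Hence $\tau\in S_{r+1,r}$ avoids $3\overline{21}$ precisely when the word $\tau(2)\tau(3)\cdots\tau(r+1)$ is increasing except for the single descent caused by the entry $r+1$ (when that entry is not in the last position).

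Next I would make this structural description explicit. Writing $r+1=\tau(m+2)$, the avoidance condition is equivalent to requiring $\tau(2)<\cdots<\tau(m+1)$ and $\tau(m+3)<\cdots<\tau(r+1)$, with no further restriction. Thus such a $\tau$ is completely determined by the subset $B_1=\{\tau(2),\ldots,\tau(m+1)\}\subseteq\{1,2,\ldots,r-1\}$: positions $2,\ldots,m+1$ hold $B_1$ in increasing order, position $m+2$ holds $r+1$, and positions $m+3,\ldots,r+1$ hold $\{1,\ldots,r-1\}\setminus B_1$ in increasing order, where either of the two increasing runs may be empty. I would verify both implications of this equivalence: that every $\tau$ of this shape avoids $3\overline{21}$ (only the descent at $r+1$ and possibly the descent at position $1$ occur, and neither can be completed to a pattern), and that every $3\overline{21}$-avoiding $\tau$ has this shape, which follows from the dichotomy established above.

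Finally, the count is immediate: $\tau\mapsto B_1$ is a bijection from $S_{r+1,r}(3\overline{21})$ onto the $2^{r-1}$ subsets of $\{1,2,\ldots,r-1\}$, so $|S_{r+1,r}(3\overline{21})|=2^{r-1}$; the degenerate case $r=1$, where $\{1,\ldots,r-1\}$ is empty and the only admissible permutation is $(1,2)$, is covered by the same reasoning. The only part needing care — bookkeeping rather than a genuine obstacle — is correctly sorting the adjacent descents of $\tau$ into the one permitted (immediately after $r+1$), the one irrelevant (at position $1$), and the forbidden ones, and confirming that the permitted and irrelevant descents truly cannot be extended to a $3\overline{21}$ pattern.
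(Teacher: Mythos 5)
Your proposal is correct and follows essentially the same route as the paper: both arguments show that any adjacent descent after position $1$ not initiated by the entry $r+1$ completes to a $3\overline{21}$ pattern using $\tau(1)=r$, characterize the avoiders as two increasing runs separated by $r+1$, and count them as $2^{r-1}$ (the paper sums $\sum_{i}\binom{r-1}{i-2}$ over the position of $r+1$, while you biject directly with subsets of $\{1,\ldots,r-1\}$ — the same count organized slightly differently).
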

\begin{proof}
Let $\tau\in S_{r+1,r}(3\overline{21})$ and let $i>1$ be such that $\tau(i)=r+1$. Then for all $j\in\{2,3,\ldots,i-2\}$, we must have $\tau(j)<\tau(j+1)$. To see this, suppose that $\tau(j)>\tau(j+1)$ for some $j\in\{2,3,\ldots,i-2\}$. Then $r\tau(j)\tau(j+1)$ is a $3\overline{21}$ pattern which is a contradiction. Similarly, for all $j\in\{i+1,i+2,\ldots,n-1\}$, we must have $\tau(j)<\tau(j+1)$. Hence, we have $\tau(2)<\tau(3)<\cdots<\tau(i-1)$ and $\tau(i+1)<\tau(i+2)<\cdots<\tau(r+1)$. 

On the other hand, it is easy to check that for all $\tau\in S_{r+1,r}$, if $\tau(i)=r+1$, $\tau(2)<\tau(3)<\cdots<\tau(i-1)$ , and $\tau(i+1)<\tau(i+2)<\cdots<\tau(r+1)$ for some $i>1$, then $\tau$ avoids $3\overline{21}$. 

So $|S_{r+1,r}(3\overline{21})|$ is equal to the number of permutations $\tau\in S_{r+1,r}$ such that for some $i\in\{2,3,\ldots,r+1\}$, we have $\tau(i)=r+1$, $\tau(2)<\tau(3)<\cdots<\tau(i-1)$, and $\tau(i+1)<\tau(i+2)<\cdots<\tau(r+1)$. Let $\tau$ be such a permutation and $i\in\{2,3,\ldots,r+1\}$. Then there are $\binom{r-1}{i-2}$ ways to choose $i-2$ numbers from $\{1,2,\ldots,r-1\}$ and assign them to $\tau(2),\tau(3),\ldots,\tau(i-1)$ so that $\tau(2)<\tau(3)<\cdots<\tau(i-1)$; once $\tau(2),\tau(3),\ldots,\tau(i-1)$ are determined, $\tau(i+1),\tau(i+2),\ldots,\tau(r+1)$ are uniquely determined as well. Hence we have
\[
|S_{r+1,r}(3\overline{21})|=\sum_{i=2}^{r+1}\binom{r-1}{i-2}=\sum_{i=0}^{r-1}\binom{r-1}{i}=2^{r-1}.\qedhere
\]
\end{proof}

If $r\geq5$, by \cref{first_table,Table:GeneralizedPatterns} and \cref{Lemma:CatalanBell,Lemma:Bell}, there are nine $r$-Wilf equivalence classes. To see this, it suffices to note that for each $r\geq5$ and for any two distinct generalized patterns $\sigma$ and $\sigma'$ in different rows, either $|S_{r,r}(\sigma)|\neq|S_{r,r}(\sigma')|$, or $|S_{r+1,r}(\sigma)|\neq|S_{r+1,r}(\sigma')|$, or $|S_{r+2,r}(\sigma)|\neq|S_{r+2,r}(\sigma')|$. We briefly describe several of them as examples.
\begin{example}
    By \cref{Table:GeneralizedPatterns} and \cref{Lemma:CatalanBell}, for all $r\geq5$, $|S_{r,r}(\overline{13}2)|=C_{r-1}<B_{r-1}=|S_{r,r}(\overline{12}3)|$. Hence, for all $r\geq5$, $\overline{13}2$ and $\overline{12}3$ are not $r$-Wilf equivalent.
\end{example}
\begin{example}
    By \cref{Table:GeneralizedPatterns} and \cref{Lemma:Bell}, for all $r\geq5$, $|S_{r,r}(\overline{21}3)|=B_{r-1}=|S_{r,r}(\overline{23}1)|$, but $|S_{r+1,r}(\overline{21}3)|=B_{r-1}<B_r-B_{r-1}=|S_{r+1,r}(\overline{23}1)|$. Hence, for all $r\geq5$, $\overline{21}3$ and $\overline{23}1$ are not $r$-Wilf equivalent.
\end{example}

\begin{example} By \cref{Table:GeneralizedPatterns}, we have $|S_{r,r}(1\overline{23})|=B_{r-1}=|S_{r,r}(\overline{12}3)|$ and $|S_{r+1,r}(1\overline{23})|=B_{r}=|S_{r+1,r}(\overline{12}3)|$, but $|S_{r+2,r}(1\overline{23})|=B_{r+1}-B_{r-1}>B_{r+1}-B_r=|S_{r+2,r}(\overline{12}3)|$ for all $r\geq5$. Hence $1\overline{23}$ and $\overline{12}3$ belong to two different equivalence classes when $r\geq5$.
\end{example}

\begin{example}
By \cref{Table:GeneralizedPatterns}, we have $|S_{r,r}(2\overline{13})|=C_{r-1}$ and $|S_{r,r}(\overline{32}1)|=B_{r-2}$ for all $r\geq5$. By \cref{first_table} and the generating functions of the Catalan and Bell numbers \cite[Sections 3.2 and 6.1]{EgeciogluGarsia2021}, we have $B_{r-1}\neq C_r$ for all $r\geq5$. Hence $2\overline{13}$ and $\overline{32}1$ belong to two different equivalence classes when $r\geq5$.
\end{example}

The following theorem completely classifies, for all $r\geq5$, the
$r$-Wilf-equivalence classes for the twelve vincular patterns of length three.
\begin{theorem}
    For all $r\geq5$, there are nine $r$-Wilf-equivalence classes for 
vincular patterns of length three: 
$2\overline{13}\rsim 2\overline{31}$, $1\overline{23}\rsim1\overline{32}$, $3\overline{21}\rsim 3\overline{12}$, and the other six classes each contains a single vincular pattern.
\end{theorem}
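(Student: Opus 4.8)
The plan is to combine \cref{Proposition:rWilfClassesWithMoreThan1} with the numerical data in \cref{Table:GeneralizedPatterns}. First I would recall that \cref{Proposition:rWilfClassesWithMoreThan1} already establishes the three non-singleton classes $2\overline{13}\rsim 2\overline{31}$, $1\overline{23}\rsim1\overline{32}$, and $3\overline{21}\rsim 3\overline{12}$, so at most nine $r$-Wilf-equivalence classes exist: these three classes together with the six remaining patterns $\overline{12}3$, $\overline{32}1$, $1\overline{32}$ (no, $1\overline{32}$ is already grouped), more precisely the six singletons $\overline{13}2$, $\overline{31}2$, $\overline{12}3$, $\overline{21}3$, $\overline{23}1$, $\overline{32}1$. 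Thus it suffices to show that, for every $r\geq 5$, these nine representatives are pairwise non-$r$-Wilf-equivalent; equivalently, that any two patterns lying in distinct rows of \cref{Table:GeneralizedPatterns} can be separated by one of the values at $n=r$, $n=r+1$, or $n=r+2$.

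Next I would organize the verification by the first column $n=r$. The rows of \cref{Table:GeneralizedPatterns} take the value $C_{r-1}$ (two rows: $2\overline{13}$-type and $\overline{13}2$), $1$ (two rows: $3\overline{21}$-type and $\overline{31}2$), $B_{r-1}$ (three rows: $1\overline{23}$-type, $\overline{12}3$, $\overline{21}3$, $\overline{23}1$ — in fact four rows), and $B_{r-2}$ (one row: $\overline{32}1$). For $r\geq 5$ the four quantities $C_{r-1}$, $1$, $B_{r-1}$, $B_{r-2}$ are pairwise distinct: $B_{r-2}<B_{r-1}$ trivially, $B_{r-2}\neq C_{r-1}$ and $B_{r-1}\neq C_{r-1}$ follow from comparing the sequences in \cref{first_table} together with the asymptotic/growth facts (for instance \cref{Lemma:CatalanBell} gives $C_{n}<B_{n}$ for $n\geq 4$, and a short index shift handles $C_{r-1}$ versus $B_{r-2}$), and all are visibly larger than $1$. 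So the $n=r$ column already separates the four ``value groups.'' Within each group I then pass to the $n=r+1$ column: the $C_{r-1}$-group splits because $|S_{r+1,r}(2\overline{13})|=C_{r-1}\neq C_r=|S_{r+1,r}(\overline{13}2)|$; the $1$-group splits because $2^{r-1}\neq r$ for $r\geq 3$; and the $B_{r-1}$-group at $n=r+1$ yields the values $B_r$ (for $1\overline{23}$-type and for $\overline{12}3$), $B_{r-1}$ (for $\overline{21}3$), and $B_r-B_{r-1}$ (for $\overline{23}1$). By \cref{Lemma:Bell}, $B_r>2B_{r-1}$, so $B_r-B_{r-1}>B_{r-1}$ and hence $B_{r-1}$, $B_r-B_{r-1}$, and $B_r$ are three distinct values; this separates $\overline{21}3$, $\overline{23}1$, and the pair $\{1\overline{23},\overline{12}3\}$ from one another.

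The only remaining pair to distinguish is $1\overline{23}$ (equivalently its class $\{1\overline{23},1\overline{32}\}$) from $\overline{12}3$, since these agree at both $n=r$ and $n=r+1$. Here I would invoke the third column: $|S_{r+2,r}(1\overline{23})|=B_{r+1}-B_{r-1}$ while $|S_{r+2,r}(\overline{12}3)|=B_{r+1}-B_r$, and since $B_r>B_{r-1}$ these are distinct. This handles every cross-row pair, so exactly nine $r$-Wilf-equivalence classes remain for all $r\geq 5$, and the theorem follows. I expect the main obstacle to be the bookkeeping rather than any single inequality: one must be careful to check that \emph{every} pair of distinct-row patterns is separated by at least one of the three columns, and that the growth inequalities (\cref{Lemma:CatalanBell}, \cref{Lemma:Bell}, and the elementary comparisons from \cref{first_table}) are applied with the correct index shifts — in particular the comparisons $C_{r-1}$ vs.\ $B_{r-2}$ and $C_r$ vs.\ $B_{r-1}$, which are exactly the ones flagged in the examples preceding the theorem.
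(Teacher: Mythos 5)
Your proposal is correct and follows essentially the same route as the paper: it combines \cref{Proposition:rWilfClassesWithMoreThan1} (which gives the upper bound of nine classes) with the entries of \cref{Table:GeneralizedPatterns}, using \cref{Lemma:CatalanBell}, \cref{Lemma:Bell}, and the elementary comparisons from \cref{first_table} to separate every pair of patterns in distinct rows at one of $n=r$, $r+1$, or $r+2$. Your grouping by the value in the $n=r$ column is just a more systematic presentation of the same case check the paper carries out in the examples preceding the theorem, including the two index-shifted Catalan--Bell comparisons that the paper also singles out.
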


\section{Concluding remarks}\label{Section:Concluding}
Miner and Pak \cite{MinerPak2014} used generalizations of \cref{Theorem:ClassicalLeading123AND132} to study the limit shapes of random permutations avoiding a given pattern. In this section, we give some ideas about the limit shapes of random $\sigma$-avoiding, $\sigma\in S_3$, permutations with 
fixed prefix $(c_1,c_2,\ldots,c_t)$. Particularly, we are interested in exploring for large $n$, what a uniformly random permutation from $S_{(c_1,c_2,\ldots,c_t)}(\sigma)$ looks like. To do this, we follow Miner and Pak \cite{MinerPak2014} and 
view permutations as matrices. That is, for each $\tau\in S_n$, we look at the $n\times n$ matrix $M(\tau)$ such that $(M(\tau))_{jk}=1$ if $\tau(j)=k$ and $(M(\tau))_{jk}=0$ if $\tau(j)\neq k$. By \cref{lemma:complements}, complementary patterns may be studied in pairs and it suffices to examine permutations avoiding the patterns $123$, $132$, and $231$.

In some situations, this question is easy to answer. 
If $1\in\{c_1,c_2,\ldots,c_t\}$, then there is a
unique permutation that avoids a $123$ pattern, as the later $n-t$ digits need to be decreasing to avoid a $23$ pattern in the second unfixed segment. So after asymptotic scaling, the limit of the unfixed segment is just the anti-diagonal $x+y=1$. The situation becomes more complicated when $1\notin\{c_1,c_2,\ldots,c_t\}$. As shown in \cref{Theorem:123LeadingTerms}, we may project our permutation from $S_n$ where the first $t$ coordinates are fixed down to a permutation from $S_{n-t+1}$ where only the first coordinate is 
fixed via `standardization.' The limiting phenomenon of these generic `reduced' $123$-avoiding 
permutations were studied in Miner and Pak \cite{MinerPak2014}, where the anti-diagonal again 
shows up. As pointed out earlier in \cref{Section:Single3}, the result for the $132$ 
pattern is similar to the $123$ pattern and the structure of the pattern-avoiding permutation is also preserved 
after projection via standardization. See \cref{Theorem:132LeadingTerms} for more details. The limiting phenomenon 
of these reduced $132$-avoiding permutations was also studied in Miner and Pak \cite{MinerPak2014}, where the anti-diagonal as well as the lower right corner show up in the asymptotic analysis. Unlike $123$ and $132$ patterns, 
fixing the prefix $(c_1,c_2,\ldots,c_t)$, a uniformly random permutation avoiding a $231$ pattern will instead display a block structure as hinted in our proof of \cref{Theorem:LeadingTerms231}. For the initial block which consists of $c_1,c_2,\ldots,c_t$, the segment of the permutation will be a fixed curve that is asymptotically in correspondence to the prefix $(c_1,c_2,\ldots,c_t)$; and for all the remaining blocks, the segment of the permutation will lie on the boundary of feasible $231$-density asymptotically. See Kenyon et al. \cite{Kenyon} and the references therein for a description of the limit shapes of these feasible regions.

We have only scratched the surface of enumerating pattern-avoiding permutations by fixed prefixes, mostly concentrating on patterns of length three. It would be interesting to study permutations with 
fixed prefixes that avoid other patterns; for instance, all single patterns of length greater than three are open. It would also be interesting to compute limits of pattern avoiding permutations chosen uniformly under certain constraints; fixing the prefix $(c_1,c_2,\ldots,c_t)$ as we have done in this paper is only one of the many possibilities out there.
\section*{Acknowledgements}

We would like to thank Darij Grinberg, Stephan Wagner and Lanqing Zhao for helpful conversations, and Mikl\'{o}s B\'{o}na for pointing out the source of \cref{Lemma:AscendIn123Avoiding}. The authors would also like to thank the referee for their suggestions which helped improve the presentation of the paper. \"O.~E\u{g}ecio\u{g}lu 
would like to acknowledge his sabbatical time at Reykjavik University in 2019 during which he 
had a chance to learn about the combinatorics of pattern avoidance. C.~Gaiser and M.~Yin were supported by the University of Denver's Professional 
Research Opportunities for Faculty Fund 80369-145601.

\section*{Declaration of Interests}

The authors declare that they have no known competing financial interests or personal relationships that could have appeared to influence the work reported in this paper.



\end{document}